\definecolor{blue}{rgb}{0,0,1}
\newtheorem{theorem}{Theorem}[section]
\newtheorem{lemma}[theorem]{Lemma}
\newtheorem{cor}[theorem]{Corollary}
\theoremstyle{definition}
\newtheorem{definition}{Definition}[section]
\theoremstyle{remark}
\newtheorem{remark}{Remark}[section]
\numberwithin{equation}{section}
\def \N {\mathbb{N}}  % naturales
\def \R {\mathbb{R}}  % reales
\def\RN {\mathbb{R}^{N}} %R^{N}
\def \Wop {W_0^{1,p}(\omega)}
\def \HH {H_0^1(\omega)}
\def \HLinfty {H_0^1(\Omega) \cap L^{\infty}(\Omega)}
\def \HLLinfty {H_0^1(\omega) \cap L^{\infty}(\omega)}
\def \pLaplac {-\Delta_p} 
\def \gradu {\nabla u}
\def \gradv {\nabla v}
\def \vd {v_{\delta}}
\def \Od {\omega_{\delta}}
\def \Sd {S_{\delta}}
\newcommand{\supp}{\operatorname{Supp}}
\renewcommand{\epsilon} {\varepsilon}
\newcommand{\esssup}{\operatorname{esssup}}
\newcommand{\diam}{\operatorname{diam}}
\begin{document}

\title[An indefinite elliptic problem with critical growth in the gradient]{A priori bounds and multiplicity of solutions for an indefinite elliptic problem with critical growth in the gradient}

%    Remove any unused author tags.

%    author one information
\author[Colette De Coster, Antonio J. Fern\'andez and Louis Jeanjean]{}
\address{}
\curraddr{}
\email{}
\thanks{}

%    author two information
%\author[A. J. Fern\'andez]{Antonio J. Fern\'andez}
%\address{Univ. Valenciennes, EA 4015 - LAMAV - FR CNRS 2956, F-59313 Valenciennes, France}
%\address{Laboratoire de Math\'ematiques (UMR 6623), Universit\'e de Bourgogne Franche-Comt\'e, 
%16 route de Gray, 25030 Besan\c con Cedex, France}
%\curraddr{}
%\email{}
%\thanks{}

%\email{colette.decoster@univ-valenciennes.fr}
%\email{antonio\_jesus.fernandez\_sanchez@univ-fcomte.fr}
%\email{louis.jeanjean@univ-fcomte.fr}

\subjclass[2010]{35A23, 35B45, 35J25, 35J92}

\keywords{critical growth in the gradient, a priori bound, continuum of solutions, p-Laplacian, boundary weak Harnack  inequality}
%\keywords{Indefinite problem, quasilinear elliptic equations, critical growth in the gradient, a priori bound, continuum of solutions, p-Laplacian, Harnack inequality}

\date{}

\dedicatory{}
\maketitle

\centerline{\scshape Colette De Coster}
\smallskip
{\footnotesize
 % please put the address of the second  and third author
 \centerline{Univ. Valenciennes, EA 4015 - LAMAV - FR CNRS 2956, F-59313 Valenciennes, France}
\vspace{0.05cm}
\centerline{\textit{E-Mail address} : \texttt{colette.decoster@uphf.fr}}
}

\bigskip

\centerline{\scshape Antonio J. Fern\'andez}
\smallskip
{\footnotesize
% please put the address of the first author
 \centerline{Univ. Valenciennes, EA 4015 - LAMAV - FR CNRS 2956, F-59313 Valenciennes, France}
 \vspace{0.09cm}
   \centerline{Laboratoire de Math\'ematiques (UMR 6623), Universit\'e de Bourgogne-Franche-Comt\'e,}
 \centerline{16 route de Gray, 25030 Besan\c con Cedex, France}
 \vspace{0.05cm}
\centerline{\textit{E-Mail address} : \texttt{antonio\_jesus.fernandez\_sanchez@univ-fcomte.fr}}
} % Do not forget to end the {\footnotesize by the sign }

\bigskip

\centerline{\scshape Louis Jeanjean\footnote{Corresponding author} \footnote{This work has been carried out in the framework of the project NONLOCAL (ANR-14-CE25-0013) funded by the French National Research Agency (ANR).} }
\smallskip
{\footnotesize
% please put the address of the first author
   \centerline{Laboratoire de Math\'ematiques (UMR 6623), Universit\'e de Bourgogne-Franche-Comt\'e,}
 \centerline{16 route de Gray, 25030 Besan\c con Cedex, France}
 \vspace{0.05cm}
\centerline{\textit{E-Mail address}: \texttt{louis.jeanjean@univ-fcomte.fr}}
} % Do not forget to end the {\footnotesize by the sign }

\selectlanguage{english}

\begin{center}\rule{1\textwidth}{0.1mm} \end{center}
\begin{abstract}
%We consider the boundary value problem
Let $\Omega \subset \RN$, $N \geq 2$, be a smooth bounded domain. We consider a boundary value problem of the form
$$-\Delta u = c_{\lambda}(x) u + \mu(x) |\gradu|^2 + h(x)\,, \quad u \in \HLinfty\,$$
where $c_{\lambda}$ depends on a parameter $\lambda \in \R$, the coefficients $c_{\lambda}$  and $h$ belong to  $L^q(\Omega)$ with $q>N/2$ and $\mu \in L^{\infty}(\Omega)$.  Under suitable assumptions, but without imposing a sign condition on any of these coefficients, we obtain an a priori upper bound on the solutions. Our proof relies on a new boundary weak Harnack inequality. This inequality, which is of independent interest, is established in the general framework of the $p$-Laplacian. 
With this a priori bound at hand, we show the existence and multiplicity of solutions.
\end{abstract}

\begin{center} \rule{1 \textwidth}{0.1mm} \end{center}

\selectlanguage{french}
\begin{abstract}
%We consider the boundary value problem
Soit $\Omega \subset \RN$, $N \geq 2$, un domaine born\'e r\'egulier. Nous consid\'erons un probl\`eme aux limites de la forme
$$-\Delta u = c_{\lambda}(x) u + \mu(x) |\gradu|^2 + h(x)\,, \quad u \in \HLinfty\,$$
o\`u $c_{\lambda}$ d\'epend d'un param\`etre $\lambda \in \R$, les coefficients $c_{\lambda}$  et $h$ sont des fonctions dans $L^q(\Omega)$  avec $q>N/2$ et $\mu \in L^{\infty}(\Omega)$.  Sous certaines hypoth\`eses, mais sans imposer une condition de signe sur aucun des coefficients, nous obtenons une borne \`a priori sup\'erieure sur les solutions. Notre preuve repose sur une nouvelle in\'egalit\'e de Harnack au bord. Cette in\'egalit\'e, qui est d'int\'er\^et propre, est \'etablie dans le cadre plus g\'en\'eral du $p$-Laplacien. L'obtention d'une borne \`a priori nous permet de d\'emontrer l'existence et la multiplicit\'e de solutions.
\end{abstract}

\begin{center} \rule{1 \textwidth}{0.1mm} \end{center}

\selectlanguage{english}

\section{Introduction and main results} \label{I}

The paper deals with the existence and multiplicity of solutions for boundary value problems of the form
\[ \label{Qlambda} \tag{$Q_{\lambda}$}
-\Delta u =  c_{\lambda}(x)  u + \mu(x) |\gradu|^2 + h(x)\,, \quad u \in \HLinfty\,,\]
with $ c_{\lambda}$ depending on a real parameter $\lambda$. Here $\Omega \subset \R^N$, $ N \geq 2$, is a bounded domain with boundary $\partial \Omega$ of class $\mathcal{C}^{1,1}$, $c_{\lambda}$  and $h$ belong to $L^q(\Omega)$ for some $q > N/2$ and $\mu$ belongs to $L^{\infty}(\Omega)$.
\medbreak

This type of problem, which started to be studied by L. Boccardo, F. Murat and J.P. Puel in the 80's, has attracted a new attention these last years.
Under the condition $c_{\lambda} \leq -\alpha_0 < 0$ a.e. in $\Omega$ for some $\alpha_0 > 0$,
the existence of a solution of \eqref{Qlambda} is a particular case of the results of \cite{B_M_P_1983, B_M_P_1992} and its  
uniqueness follows from \cite{B_B_G_K_1999, B_M_1995}. The   case $c_{\lambda} \equiv 0$  
was studied in \cite{F_M_2000, A_DA_P_2006} and the existence requires some smallness condition on $\| \mu h\|_{N/2}$. 
The situation where one only requires $c_{\lambda} \leq  0$   a.e. in $\Omega$ (i.e. allowing parts of 
the domain where $c_{\lambda} \equiv 0$   and parts of it where $c_{\lambda} < 0$  )
proved to be more complex to treat. In the recent papers \cite{A_DC_J_T_2015, DC_F_2018}, the authors explicit sufficient conditions  for the existence of solutions of \eqref{Qlambda}. Moreover, in \cite{A_DC_J_T_2015}, the uniqueness of solution is established (see also \cite{A_DC_J_T_2014} in that direction). All these results were obtained without requiring any sign conditions on $\mu$ and $h$.
\medbreak

In case $c_{\lambda} = \lambda c \gneqq 0$, as we shall discuss later, problem \eqref{Qlambda} behaves very differently and becomes much richer.  Following \cite{S_2010}, which considers a particular case,
%{\color{red}Rajouter dans la bibliographie [
%B. Sirakov, Solvability of uniformly elliptic fully nonlinear PDE, Arch. Ration. Mech. Anal. 195 (2010) 579–607. ]} which consider a particular case,
 \cite{J_S_2013} studied \eqref{Qlambda} with $\mu(x) \equiv \mu > 0$ and 
$\lambda c \gneqq 0$ but without a sign condition on $h$. The authors proved the existence of at least two 
solutions when $\lambda>0$ and $\|(\mu h)^{+}\|_{N/2}$ are small enough. 
The restriction $\mu$ constant was removed in \cite{A_DC_J_T_2015} and extended to $\mu(x) \geq \mu_1 > 0$ a.e. in $\Omega$, at the expense of adding the hypothesis $h \gneqq 0$. 
Next, in \cite{DC_J_2017}, assuming stronger regularity on $c$ and $h$,  the authors  removed the condition $h \gneqq 0$. In this paper, it is also lightened that the structure of the set of solutions when $\lambda >0$, crucially depends  on the sign of the (unique) solution of $(Q_0)$.  Note that, in \cite{DC_F_2018}, the above results are extended to the $p$-Laplacian case. Also,  in the frame of viscosity solutions and fully nonlinear equations, under corresponding assumptions, similar conclusions have been obtained very recently in \cite{SN_S_2018}.
\medbreak

We refer to \cite{J_S_2013} for an heuristic explanation on how the behavior of \eqref{Qlambda} is affected by the change of sign in front of the linear term. Actually, in the case where $\mu(x) \equiv \mu$ is a constant, it is possible to transform problem \eqref{Qlambda} into a new one which admits a variational formulation. 
When $c_{\lambda} \leq - \alpha_0 < 0$, the associated functional, defined on $H_0^1(\Omega)$, is coercive. If $c_{\lambda} \lneqq 0$, the coerciveness may be lost and when $c_{\lambda} \gneqq 0$, in fact as soon as $c_{\lambda}^+ \gneqq 0$, the functional is unbounded from below.
In \cite{J_S_2013} this variational formulation was directly used to obtain the solutions. In \cite{A_DC_J_T_2015, DC_J_2017} where $\mu$ is non constant, topological arguments, relying on the derivation of a priori bounds for certain classes of solutions, were used.
\medbreak

The only known results where $c_{\lambda}$ may change sign are \cite{J_RQ_2016, DC_F_2018-A3} (see also \cite{GM_I_RQ_2015} for related problems). %{\color{red}Ajouter notre article 3}.
They both concern the case where $\mu$ is 
a positive constant. In \cite{J_RQ_2016}, assuming $h \gneqq 0$,   $\mu h$ and $c_{\lambda}^+$ small in an appropriate sense, the existence of at least two non-negative solutions was proved. In \cite{DC_F_2018-A3}, the authors show that the loss of positivity of the coefficient of $u$ does not affect the structure of the set of  solutions of \eqref{Qlambda} observed in \cite{DC_J_2017} when
$c_{\lambda}=\lambda c \gneqq 0$.  Since $\mu$ is constant in \cite{J_RQ_2016, DC_F_2018-A3}, it is possible to treat the problem variationally.  The main issue, to derive the existence of solutions, is then to show the boundedness of the Palais-Smale sequences.   
\medbreak

When $c_{\lambda} \gneqq 0$, all the above mentioned results require either $\mu$ to be constant or to be uniformly bounded from below by a positive constant (or similarly bounded from above by a negative constant). In \cite{Soup_2015}, assuming that the three coefficients functions are non-negative, a first attempt to remove these restrictions on $\mu$ is presented.
Following the approach of \cite{A_DC_J_T_2015}, the proofs of the existence results reduce to obtaining a priori bounds on the non negative solutions  of \eqref{Qlambda}. First it is observed in \cite{Soup_2015} that a necessary condition is the existence of a ball $B(x_0,\rho)\subset\Omega$  and 
$\nu>0$ such that $\mu\geq \nu$ and $c\geq \nu$ on  $B(x_0,\rho)$. When $N=2$ this condition also  proves to be sufficient. If $N=3$ or $4$ the condition $\mu\geq \mu_0>0$ on a set
$\omega \subset \Omega$ such that $\mbox{supp}(c)\subset\overline{\omega}$ permits to obtain the a priori bounds. Other sets of conditions are presented when $N=3$ and $N=5$. However, if the approach developed in \cite{Soup_2015}, which relies on interpolation and elliptic estimates in weighted Lebesgue spaces, works well in low dimension, the possibility to extend it to dimension $N \geq6$ is not apparent.  

\medbreak

In this paper we pursue the study of \eqref{Qlambda} and consider situations where the three coefficients functions $c_{\lambda}$,  $\mu$ and $h$ may change sign. We define for $v \in L^1(\Omega)$, $v^+= \max(v,0)$ and $v^- =
\max(-v,0)$. 
As observed already in \cite{DC_F_2018-A3}, the structure of the solution set depends on the size of the  
positive hump (i.e. $c_{\lambda}^+$) but it is not affect by the size of the  negative hump (i.e. $c_{\lambda}^-$). 
Hoping to clarify this,  we now write $c_{\lambda}$ under the form $c_{\lambda}=\lambda c_+- c_-$
and
%Hoping to clarify further the role of the positive part, we modify the formulation of our problem and now 
consider the problem
\[ \label{Plambda} \tag{$P_{\lambda}$}
-\Delta u = (\lambda c_+(x)- c_-(x))  u + \mu(x) |\gradu|^2 + h(x)\,, \quad u \in \HLinfty\,,\]
under the assumption
\[ \label{A1} \tag{$A_1$} 
\left\{
\begin{aligned}
&\Omega \subset \RN,\, N \geq 2, \textup{ is a bounded domain with boundary }\partial \Omega \textup{  of class }\mathcal{C}^{1,1}, 
\\&c_+, c_- ,  h^{+} \in L^q(\Omega) \textup{ for some } q > N/2 \,,   
\  \mu,  h^{-} \in L^{\infty}(\Omega) \,, 
\\
& c_+(x) \geq 0, \ c_-(x) \geq 0 \textup{ and } c_-(x) c_+(x) =0 \textup{ a.e. in }\Omega,
\\
& |\Omega_{+}|> 0, \textup{ where } \Omega_{+} :=  \supp(c_{+})
%\textup{ is  an open subset of } \Omega \textup{ such that } 
%%\textup{  where } c > 0 \textup{  a.e. },
%  % \textup{ and } 
% |\supp(c^{+}) \setminus \Omega_{+}| = 0
%  \textup{ and }  c^+=0  \textup{ a.e. in } \Omega\setminus\Omega_+,
  \\
&\textup{there exists a }\epsilon>0 \textup{ such that  }\mu(x) \geq \mu_1 > 0 \textup{ and }c_- = 0 \textup{ in } \{x\in \Omega : d(x,\Omega_+)<\epsilon\}.
%\textup{ of } \overline{\Omega}_{+}  \textup{ with }\partial \Omega_1 
% \textup{  of class }\mathcal{C}^{1,1}.
\end{aligned}
\right.
\] 
For a definition of $\supp(f) $ with $f \in L^p(\Omega)$, for some $p \geq 1$, we refer to \cite[Proposition 4.17]{Brezis}.  Note also that the condition that $c_-=0$ on $\{x\in \Omega : d(x,\Omega_+)<\epsilon\}$ for some $\epsilon >0$, is reminiscent of the so-called ``thick zero set" condition first introduced in \cite{AlTa}.
\medbreak

We also observe that, under the regularity assumptions of condition \eqref{A1}, any solution of  \eqref{Plambda} belongs to $\mathcal{C}^{0,\tau}(\overline{\Omega})$ for some $\tau > 0$. This can be deduce from \cite[Theorem IX-2.2]{L_U_1968}, see also
\cite[Proposition 2.1]{A_DC_J_T_2014}.
\medbreak

As in \cite{A_DC_J_T_2015,DC_J_2017, Soup_2015} we obtain our results using a topological approach, relying thus on the derivation of a priori bounds.  In that direction our main result is the following.
\begin{theorem} 
\label{aPrioriBound} 
Assume \eqref{A1}.
% and suppose that \eqref{P0} has a non-negative solution. 
Then, for any $ \Lambda_2 > \Lambda_1 > 0$, there exists a constant $M > 0$ such that, for each 
$\lambda \in [\Lambda_1, \Lambda_2]$, any 
%non-negative 
solution of \eqref{Plambda} satisfies 
$\sup_{\Omega} u  \leq M$.
\end{theorem}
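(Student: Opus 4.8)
The plan is to argue by contradiction and to reduce \eqref{Plambda}, via a Hopf--Cole type substitution that eliminates the quadratic gradient term, to an elliptic inequality with a \emph{below-critical} right-hand side, which is then handled by the interior and boundary weak Harnack inequalities. Assume the conclusion fails: there are $\lambda_n\in[\Lambda_1,\Lambda_2]$ and solutions $u_n$ with $M_n:=\sup_\Omega u_n\to+\infty$; up to a subsequence $\lambda_n\to\lambda_0\geq\Lambda_1>0$ and, since any solution lies in $\mathcal C^{0,\tau}(\overline\Omega)$, $M_n=u_n(x_n)$ for some $x_n\in\Omega$ (with $u_n=0$ on $\partial\Omega$, so $x_n\notin\partial\Omega$ for $n$ large), and $x_n\to x_0\in\overline\Omega$. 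Put $\mu_2:=\|\mu\|_{L^\infty(\Omega)}$ and $v_n:=\mu_2^{-1}(e^{\mu_2 u_n}-1)\in H_0^1(\Omega)\cap L^\infty(\Omega)$; since $\mu-\mu_2\leq 0$,
\[
-\Delta v_n=e^{\mu_2 u_n}\big[(\lambda_n c_+-c_-)u_n+h\big]+(\mu-\mu_2)e^{\mu_2 u_n}|\nabla u_n|^2\leq e^{\mu_2 u_n}\big[(\lambda_n c_+-c_-)u_n+h\big]\quad\text{in }\Omega,
\]
so the non-constancy of $\mu$ disappears. Bounding the right-hand side and passing to $v_n^+$ yields
\[
-\Delta v_n^+\leq a(x)\,(1+v_n^+)\ln(e+v_n^+)+b(x)\quad\text{in }\Omega,\qquad v_n^+=0\ \text{on }\partial\Omega,
\]
with $a,b\in L^q(\Omega)$ \emph{fixed}, $q>N/2$ (one may take $a=C(c_++h^+)$ and $b=C\,c_-$; where $u_n<0$ one uses $\sup_{t<0}(-t)e^{\mu_2 t}<\infty$, so that no lower bound on $u_n$ is needed here), and $\sup_\Omega v_n^+=\mu_2^{-1}(e^{\mu_2 M_n}-1)\to+\infty$. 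Near $\Omega_+$, where $\mu\geq\mu_1>0$ and $c_-\equiv0$ by \eqref{A1}, the \emph{same} transformation with $\mu_1$ in place of $\mu_2$ produces instead a \emph{supersolution}: $W_n:=\mu_1^{-1}(e^{\mu_1 u_n}-1)$ satisfies $-\Delta W_n\geq e^{\mu_1 u_n}[\lambda_n c_+u_n+h]$ on $\{x:d(x,\Omega_+)<\epsilon\}$; this is precisely where those two hypotheses of \eqref{A1} are used.

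The core is to show that $v_n^+$ is then bounded in $L^\infty(\Omega)$, uniformly in $n$, which contradicts $\sup_\Omega v_n^+\to+\infty$. The obstruction is that the right-hand side above, although below critical (it is $\leq C_\delta\,a(x)(1+v_n^+)^{1+\delta}$ for every $\delta>0$), is \emph{not} bounded a priori, so the naive elliptic bootstrap is circular. To break the circularity one brings in the weak Harnack inequality: working with the non-negative supersolution $\sup_\Omega v_n^+-v_n^+$ (which vanishes at the maximum point $x_n$, so its infimum over small balls around $x_n$ is $0$), or with $W_n$ near $\Omega_+$, and applying the weak Harnack inequality on balls $B_{R_n}(x_n)$ with $R_n\to0$ chosen slowly — so that, since $q>N/2$, the scaling factor $R_n^{2-N/q}\to0$ dominates the (logarithmic) growth of the right-hand side — forces $v_n^+$ to be comparable to its supremum on a fixed proportion of $B_{R_n}(x_n)\cap\Omega$; coupling this with elliptic estimates in an iteration then produces a uniform bound on $v_n^+$ in $L^s(\Omega)$ for every finite $s$. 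I expect the main difficulty to be exactly the passage near the boundary: since $\Omega_+=\supp(c_+)$, hence the support of the ``source'' $a$, is allowed to touch $\partial\Omega$, the maximum point $x_n$ may approach $\partial\Omega$, where the classical interior weak Harnack inequality is of no use; establishing a boundary weak Harnack inequality valid in this setting — with $L^q$ coefficients, $q>N/2$, and, more generally, for the $p$-Laplacian — is the key new ingredient and the crux of the whole argument.

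Finally, once $v_n^+$ is bounded in $L^s(\Omega)$ for every finite $s$, a standard elliptic bootstrap concludes: writing $a(1+v_n^+)\ln(e+v_n^+)=\widetilde a_n(1+v_n^+)$ with $\widetilde a_n:=a\ln(e+v_n^+)$ bounded in $L^{q'}(\Omega)$ for some $q'\in(N/2,q)$, the interior and boundary local boundedness estimates with $L^{q'}$ potentials applied to $-\Delta v_n^+\leq\widetilde a_n\,v_n^+ + (\widetilde a_n+b)$ give, after covering $\overline\Omega$ by finitely many balls, a uniform bound on $\sup_\Omega v_n^+$. This is the desired contradiction, and it yields a constant $M>0$ — depending only on $N$, $q$, $\Omega$, $\Lambda_2$, $\mu_1$, $\|\mu\|_\infty$, $\|c_+\|_q$, $\|c_-\|_q$, $\|h^+\|_q$ and $\|h^-\|_\infty$ — such that $\sup_\Omega u_n=\mu_2^{-1}\ln(1+\mu_2\sup_\Omega v_n^+)\leq M$, proving Theorem~\ref{aPrioriBound}.
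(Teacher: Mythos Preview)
Your overall architecture is right --- Hopf--Cole substitutions with the two extremal values $\mu_1,\mu_2$, a weak Harnack step to get an $L^s$ bound, and then a local maximum principle to close --- and you have correctly identified the boundary weak Harnack inequality as the new ingredient. But there is a real gap at the localisation step.

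\medskip

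\textbf{The missing reduction to $\Omega_+$.} You take $x_n$ to be the global maximum of $u_n$ and pass to a limit $x_0\in\overline\Omega$. Nothing forces $x_0\in\overline{\Omega}_+$. Your supersolution $W_n$ (the $\mu_1$-transformation) only satisfies $-\Delta W_n\geq e^{\mu_1 u_n}[\lambda_n c_+u_n+h]$ on the $\epsilon$-neighbourhood of $\Omega_+$, so if $x_0\notin\overline{\Omega}_+$ that branch of your argument is unavailable. Your alternative, working with the supersolution $\sup_\Omega v_n^+-v_n^+$ on shrinking balls $B_{R_n}(x_n)$, does not close: applying the weak Harnack inequality only yields
\[
\Big(\int_{B_{2R_n}(x_n)}(\sup v_n^+-v_n^+)^s\Big)^{1/s}\ \lesssim\ \big\|a(1+v_n^+)\ln(e+v_n^+)+b\big\|_{L^r(B_{4R_n}(x_n))},
\]
and the right-hand side is of order $\|a\|_{L^r(B_{4R_n})}\cdot(\sup v_n^+)\ln(\sup v_n^+)$. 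The scaling gain $R_n^{2-N/q}$ you allude to does \emph{not} beat this: it is polynomial in $R_n$, whereas $\sup v_n^+$ is an independent large parameter; one would need $R_n$ tied to $\sup v_n^+$, but then the resulting inequality gives no information on $\sup v_n^+$. Tellingly, this line of argument uses neither $c_+\gneqq 0$ nor $\Lambda_1>0$, and indeed your final constant omits $\Lambda_1$; but the theorem is false for $\Lambda_1=0$ in general (the continuum in Theorem~\ref{th1} bifurcates from infinity as $\lambda\downarrow 0$), so $\Lambda_1$ must enter somewhere.

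The paper handles this by a separate comparison argument (Lemma~\ref{Step 1}): on $D:=\Omega\setminus\overline{\Omega}_+$ the equation is $-\Delta u=-c_-u+\mu|\nabla u|^2+h$, \emph{independent of $\lambda$}, and comparing $u_n-\sup_{\partial D}u_n^+$ with a fixed solution $\tilde u$ of some $(P_{\tilde\lambda})$ gives $\sup_\Omega u_n\leq\sup_{\Omega_+}u_n^++M$ with $M$ fixed. This is what pins the blow-up to $\overline{\Omega}_+$ and is the step you are missing.

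\medskip

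\textbf{The Harnack step near $\Omega_+$.} Even once localised to $\overline{\Omega}_+$, your description is too sketchy. The paper does not use a shrinking-ball argument; instead it applies a Brezis--Cabr\'e type lemma (Lemma~\ref{bcLemma1}) to the supersolution inequality \eqref{v_1} to obtain, with $k:=\inf_{B_R}v_1$,
\[
k\ \geq\ C\int_{B_R}c_+\,(1+\mu_1 w_1)\,g_1(w_1)^+\,dx\ \geq\ C\,\|c_+\|_{L^1(B_R)}\,(\mu_1 k-\mu_1 D)\ln(\mu_1 k-\mu_1 D),
\]
which forces $k$ to stay bounded because $t\ln t$ is superlinear. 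This is where $\Lambda_1>0$ and $c_+\gneqq 0$ on $B_R$ are used. The weak Harnack (resp.\ boundary weak Harnack) inequality then converts the bound on $\inf v_1$ into an $L^s$ (resp.\ $L^\epsilon$) bound on $1+\mu_1 w_1$, after which your Step~3 bootstrap --- which is essentially the paper's Step~4 --- is correct.
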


Having at hand this a priori bound, following the strategy of \cite{A_DC_J_T_2015}, we show the existence of a continuum of solutions of \eqref{Plambda}. More precisely, defining
\begin{equation} \label{sigma}
\Sigma := \{ (\lambda, u ) \in \R \times \mathcal{C}(\overline{\Omega})  :
 u \textup{ solves } \eqref{Plambda} \},
\end{equation} 
we prove the following theorem.

\begin{theorem} \label{th1}
Assume \eqref{A1} and suppose that $(P_0)$ has a solution $u_0$ with $c_+u_0 \gneqq 0$. 
Then, there exists a continuum $\mathscr{C} \subset \Sigma$ such that the projection of $\mathscr{C}$ on the 
$\lambda$-axis is an unbounded interval $(-\infty,\overline{\lambda}]$ for some 
$\overline{\lambda} \in (0,+\infty)$ and $\mathscr{C}$ bifurcates from infinity to the right of the axis $\lambda = 0$. 
Moreover:
\begin{itemize}
\item[1)]
for all $\lambda\leq0$, the problem \eqref{Plambda} has an unique solution $u_{\lambda}$   and this solution satisfies $u_0-\|u_0\|_{\infty}\leq u_{\lambda}\leq u_0$.
\item[2)]
 there exists $\lambda_0 \in (0, \overline{\lambda}]$ such that, for all $\lambda \in (0,\lambda_0)$, 
 the problem \eqref{Plambda} has at least two solutions with $u_i\geq u_0$ for $i=1$, $2$.
\end{itemize}
\end{theorem}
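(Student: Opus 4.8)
The plan is to follow the topological scheme of \cite{A_DC_J_T_2015, DC_J_2017}, with Theorem~\ref{aPrioriBound} as the essential ingredient, treating in turn the regime $\lambda\le 0$ (which gives part~1), the construction of a first (minimal) solution and of $\overline{\lambda}$ for $\lambda>0$, and then the global continuum together with the second solution of part~2. For $\lambda\le 0$: since $u_0$ solves $(P_0)$ and $u_0$ and $u_0-\|u_0\|_{\infty}$ share the same gradient, the sub/supersolution inequalities for \eqref{Plambda} reduce to the pointwise inequalities $-\lambda\,c_+u_0\ge 0$ and $\lambda\,c_+(u_0-\|u_0\|_{\infty})+c_-\|u_0\|_{\infty}\ge 0$, both immediate from $c_+u_0\ge 0$, $\lambda\le 0$ and $u_0\le\|u_0\|_{\infty}$; hence $u_0$ is a supersolution and $u_0-\|u_0\|_{\infty}$ a subsolution of \eqref{Plambda}. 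As $\lambda c_+-c_-\le 0$ a.e., the sub/supersolution method for problems with quadratic growth in the gradient (\cite{B_M_P_1983}, see also \cite{A_DC_J_T_2014}) yields a solution $u_{\lambda}$ with $u_0-\|u_0\|_{\infty}\le u_{\lambda}\le u_0$; the uniqueness results available under \eqref{A1} (the thick zero set condition together with $\mu\ge\mu_1>0$ near $\Omega_+$; cf.\ \cite{A_DC_J_T_2015, A_DC_J_T_2014, DC_F_2018}) show $u_{\lambda}$ is the only solution, and continuous dependence makes $\lambda\mapsto u_{\lambda}$ a continuous curve, hence a connected subset of $\Sigma$ to be incorporated in $\mathscr{C}$.

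For $\lambda>0$ the same identity gives $-\Delta u_0-[(\lambda c_+-c_-)u_0+\mu|\gradu|^2+h]=-\lambda\,c_+u_0\lneqq 0$ (note that $c_+u_0\gneqq 0$ and $u_0\in\mathcal{C}^{0,\tau}(\overline{\Omega})$ force $u_0\ge 0$ on $\Omega_+$), so $u_0$ is a strict subsolution of \eqref{Plambda}. I would then construct, for $\lambda$ small, a supersolution $\overline{u}_{\lambda}\ge u_0$ — as in \cite{A_DC_J_T_2015}, by solving an auxiliary problem whose datum dominates the excess term $\lambda c_+u$ while keeping the corresponding solutions a priori controlled — and obtain by monotone iteration a minimal solution $\underline{u}_{\lambda}\ge u_0$, with $\underline{u}_{\lambda}\to u_0$ as $\lambda\to 0^+$. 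Letting $\overline{\lambda}$ be the supremum of those $\lambda>0$ for which \eqref{Plambda} admits a solution $u\ge u_0$, one has $\overline{\lambda}>0$ by the above, and $\overline{\lambda}<+\infty$ by a standard eigenvalue obstruction: near $\Omega_+$, where $\mu\ge\mu_1>0$ and $c_-=0$, the gradient term is nonnegative and can be discarded, after which testing \eqref{Plambda} against a first Dirichlet eigenfunction on a small ball where $c_+$ is positive on a set of positive measure yields a contradiction for $\lambda$ large. Using Theorem~\ref{aPrioriBound} to bound the solutions for $\lambda$ in compact subsets of $(0,\overline{\lambda}]$, together with the $\mathcal{C}^{0,\tau}(\overline{\Omega})$ regularity for compactness, the minimal solutions and the branch above are seen to lie on a continuum whose projection onto the $\lambda$-axis is $(-\infty,\overline{\lambda}]$.

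For the second solution, letting $u$ range over the order interval $\{u\ge u_0\}$ truncated above at the a priori bound $M$ of Theorem~\ref{aPrioriBound}, I would reformulate \eqref{Plambda} as a fixed point $u=S_{\lambda}(u)$ of a compact operator on $\mathcal{C}(\overline{\Omega})$: the truncations (below at $u_0$, above at $M+1$) and a truncation–approximation of the quadratic term $\mu|\gradu|^2$, which a priori lies only in $L^1$, are precisely what restore compactness, and interior fixed points are exactly the solutions of \eqref{Plambda}. A Leray--Schauder degree computation then gives $\deg(I-S_{\lambda},\mathcal{O},0)=0$ over a suitably large region $\mathcal{O}$ — through a homotopy that pushes the forcing term up and exploits the non-existence of solutions $u\ge u_0$ for $\lambda>\overline{\lambda}$, the a priori bound keeping the homotopy family in a fixed ball — while the local degree at $\underline{u}_{\lambda}$ equals $1$; excision then produces, for $\lambda\in(0,\lambda_0)$ with a suitable $\lambda_0\le\overline{\lambda}$, a second solution, necessarily $\ge u_0$ by construction. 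Finally, a global continuation argument (Leray--Schauder/Whyburn type, as in \cite{A_DC_J_T_2015, DC_J_2017}) applied to $S_{\lambda}$, combined with the a priori bounds — which prevent the branch from reaching $\lambda=+\infty$ or returning with bounded norm — forces the continuum $\mathscr{C}$ through $(0,u_0)$ to be unbounded, to project exactly onto $(-\infty,\overline{\lambda}]$, and to emanate from $\{\lambda=0^+,\ \|u\|_{\infty}=+\infty\}$, i.e.\ to bifurcate from infinity to the right of $\lambda=0$; this also reconciles the two solutions of part~2 with the global shape of $\mathscr{C}$.

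The main obstacle is the degree argument for the second solution: setting up the fixed-point operator with the correct compactness in the presence of the quadratic gradient nonlinearity, and, above all, identifying a homotopy along which the degree over the large region vanishes while remaining within the range of validity of Theorem~\ref{aPrioriBound}, whose bound degenerates precisely as $\lambda\to 0^+$ — exactly where the branch escapes to infinity. The non-constant coefficient $\mu$, which rules out any variational reformulation, is what makes this purely topological route unavoidable.
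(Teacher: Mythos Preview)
Your Part~1 is essentially the paper's: both use that $u_0$ and $u_0-\|u_0\|_{\infty}$ are super/subsolutions of \eqref{Plambda} when $\lambda\le 0$, and invoke the uniqueness/comparison results of \cite{A_DC_J_T_2014,A_DC_J_T_2015}.

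For Part~2 the paper takes a genuinely different and more economical route. Instead of constructing a supersolution for small $\lambda$, extracting a minimal solution, and then running a separate degree computation for the second solution, the paper introduces a single modified problem
\[
-\Delta u + u = (\lambda c_+(x) - c_-(x) + 1)\big((u-u_0)^+ + u_0\big) + \mu(x)|\nabla u|^2 + h(x), \quad u \in \HLinfty,
\]
whose solutions are automatically $\ge u_0$ (hence solutions of \eqref{Plambda}), and for which the associated completely continuous operator $\overline{T}_\lambda$ has index $1$ at the unique fixed point $u_0$ when $\lambda=0$. A single application of Rabinowitz's global continuation theorem \cite[Theorem~3.2]{R_1971} then produces the whole continuum $\mathscr{C}$; Lemma~\ref{nonExistenceLambdaLarge} bounds its $\lambda$-projection, Theorem~\ref{aPrioriBound} bounds it in norm for $\lambda$ in compacts of $(0,\overline\Lambda]$, and therefore the unboundedness of $\mathscr{C}$ is forced to occur as $\lambda\to 0^+$. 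The two solutions of part~2) then drop out of the connectedness of $\mathscr{C}$ (it contains $(0,u_0)$ \emph{and} a branch going to infinity near $\lambda=0^+$), with no local-degree computation at a minimal solution needed.

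Your scheme is not wrong, but two points deserve care. First, the supersolution $\overline{u}_\lambda\ge u_0$ for small $\lambda>0$ is not obvious under \eqref{A1} (no sign on $h$, $\mu$ non-constant); the paper's truncation $(u-u_0)^++u_0$ is precisely what replaces this construction. Second, your operator $S_\lambda$ is truncated above at $M+1$ with $M$ the bound of Theorem~\ref{aPrioriBound}; since $M$ depends on the interval $[\Lambda_1,\Lambda_2]$ and blows up as $\Lambda_1\to 0$, this is fine for a degree argument at fixed $\lambda$ but not for a global continuum across all $\lambda>0$. The paper truncates only from below, keeping $\overline{T}_\lambda$ uniformly defined, and uses the a priori bound \emph{a posteriori} on the continuum. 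In fact the obstacle you flag at the end---the degeneration of the bound as $\lambda\to 0^+$---is exactly what the paper turns to its advantage: the continuum is unbounded \emph{because} the bound fails only there, and this unboundedness is what delivers both the bifurcation from infinity and the multiplicity.
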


\begin{remark} \mbox{}

\begin{itemize}
\item[(a)]
 Theorem \ref{th1}, 1) generalizes   \cite[Theorem 1.2]{A_DC_J_T_2015}.
% that we give here   for completeness. %
\medbreak

\item[(b)]
Note that problem $(P_0)$ is given by
$$-\Delta u =  - c_{-}(x)u + \mu(x) |\gradu|^2 + h(x)\,, \qquad u \in \HLinfty\,.$$
In \cite{A_DC_J_T_2015, DC_F_2018} the authors give sufficient conditions to ensure the existence of a
solution of $(P_0)$. 
Moreover, if $h \geq 0$ in $\Omega$, \cite[Lemma 2.2]{A_DC_J_T_2014} implies that the solution of $(P_0)$ is non-negative.
\end{itemize}
\end{remark}

Let us give some ideas of the proofs. As we do not have global sign conditions, the approaches used in \cite{A_DC_J_T_2015,DC_J_2017,Soup_2015} to 
obtain the a priori bounds do not apply anymore and another strategy is required. 
To this aim, we further develop some techniques first sketched in the unpublished work
\cite{S_2015}. These techniques, in the framework of viscosity solutions of fully nonlinear equations, 
now lies at the heart of the paper  \cite{SN_S_2018}. We also make use of some ideas 
from \cite{GM_I_RQ_2015}. First we show, in Lemma \ref{Step 1}, that it is sufficient to control the behavior
 of the solutions on $\overline{\Omega}_{+}$. By compactness, we are then reduced to study what happens around an (unknown) point $\overline{x} \in \overline{\Omega}_{+}$. We shall consider separately the alternative cases $\overline{x} \in \overline{\Omega}_{+} \cap \Omega$ and $\overline{x} \in \overline{\Omega}_{+} \cap \partial \Omega$. A local analysis is made respectively in a ball or a semiball centered at $\overline{x}$.
%By compactness we are then reduce to 
%study what happens on a ball centered in a point $\overline{x}$ of $\overline\Omega_+ 
%\cap \Omega$ as well as on a semi ball centered in a point $\overline{x}$
%of $\overline\Omega_+ \cap \partial\Omega$.
If similar analysis, based on the use of Harnack type 
 inequalities, had previously been performed in other contexts when $\overline{x} \in \Omega$,  
 we believe it is not the case when $\overline{x} \in \partial \Omega$. 
 For $\overline{x} \in \partial \Omega$, the key to our approach is the use of boundary weak Harnack inequality.
 Actually a major part of the paper is devoted to establishing  this inequality. 
 This is done in  a more general context than needed for \eqref{Plambda}. 
 In particular it also cover the case of the $p$-Laplacian with a zero order term. We  believe
 that this ``boundary weak Harnack inequality", see Theorem  \ref{BWHIP}, 
 is of independent interest and will  proved to be useful in other settings. Its proof uses ideas  introduced by 
%B. Sirakov  \cite{S_2015}, see also \cite{S_2017} 
  B. Sirakov  \cite{S_2017}. In \cite{S_2017} such type of inequalities is 
established for an uniformly elliptic operator and viscosity solutions. However, since our context is 
quite different, the result of \cite{S_2017}  does not apply to our situation and we need
 to work out an adapted proof.
\medbreak

We now describe the organization of the paper. In Section \ref{II}, we present some preliminary results which are needed in the development of our proofs. In Section \ref{appBWHI}, we prove the boundary weak Harnack inequality for the $p$-Laplacian. The a priori bound, namely Theorem \ref{aPrioriBound}, is proved in Section \ref{III}. Finally Section \ref{IV} is devoted to the proof of Theorem \ref{th1}.
\bigbreak

\noindent \textbf{Notation.}
\begin{enumerate}{\small 
\item[1)] In  $\mathbb R^N$, we use the notations $|x|=\sqrt{x_1^2+\ldots+x_N^2}$ and $B_R(y)=\{x\in \mathbb R^N : |x-y|<R\}$.
%\item[2)] For $v \in L^1(\Omega)$ we define $v^+= \max(v,0)$ and $v^- =
%\max(-v,0)$.
\item[2)] We denote $\mathbb R^+=(0,+\infty)$,  $\mathbb R^-=(-\infty,0)$ and $\mathbb N=\{1,2,3,\ldots\}$.
\item[3)] For $h_1$, $h_2\in L^1(\Omega)$ we write
\begin{itemize}
\item $h_1\leq h_2$ if $h_1(x)\leq h_2(x)$ for a.e. $x\in\Omega$,
\item $h_1\lneqq h_2$ if $h_1\leq h_2$ and 
$\textup{meas}(\{x\in\Omega:
h_1(x)<h_2(x)\})>0$.
\end{itemize}}
\end{enumerate}

\section{Preliminary results} \label{II}

In this section, we collect some results which will play an important role throughout the work. First of all, let us consider the boundary value problem 
\begin{equation} \label{eqlu}
-\Delta u + H(x,u,\gradu) = f, \qquad u \in \HLLinfty.
\end{equation}
Here $\omega \subset \R^N$ is a bounded domain, $f \in L^1(\omega)$ and $H: \omega \times \R \times \RN \rightarrow \R$ is a Carath\'eodory function. 

\begin{definition}\label{lower-upper}
We say that $\alpha \in H^1(\omega) \cap L^{\infty}(\omega)$ is a \textit{lower solution} of \eqref{eqlu} 
if $\alpha^{+} \in \HH$ and, for all $\varphi \in \HLLinfty$ with $\varphi \geq 0$, we have
\[ \int_{\omega} \nabla \alpha \nabla \varphi\, dx + \int_{\omega} H(x,\alpha, \nabla \alpha) \varphi\, dx 
\leq \int_{\omega} f(x) \varphi\, dx\,.\]
Similarly, $\beta \in H^{1}(\omega) \cap L^{\infty}(\omega)$ is an \textit{upper solution} of \eqref{eqlu} 
if $\beta^{-} \in \HH$ and, for all $\varphi \in H_0^1(\omega) \cap L^{\infty}(\omega)$ with $\varphi \geq 0$, we have
\[ \int_{\omega}  \nabla \beta \nabla \varphi\,dx + \int_{\omega} H(x,\beta, \nabla \beta) \varphi\, dx 
\geq \int_{\omega} f(x) \varphi\,dx\,.\] 
\end{definition}

Next, we consider the boundary value problem
\begin{equation} \label{eqBC}
-\Delta u + a(x)u = b(x)\,, \qquad u \in \HH\,,
\end{equation}
under the assumption
\begin{equation} \label{hypLMP}
\left\{
\begin{aligned}
& \omega \subset \RN,\ N \geq 2\,, \textup{ is a bounded domain,} \\
& a,\,\, b \in L^r(\omega)\, \textup{ for some }\, r > N/2.
\end{aligned}
\right.
\end{equation}

\begin{remark}
With the regularity imposed in the following lemmas and in the absence of a gradient term in the equation, we do not need the lower and upper solutions to be  bounded. The full Definition \ref{lower-upper} will however be needed in other parts of the paper.  
\end{remark}

\begin{lemma} \rm\textbf{(Local Maximum Principle)} \label{LMP}
\it Under the assumption \eqref{hypLMP}, assume that $u \in H^1(\omega)$ is a lower solution of \eqref{eqBC}.
%{eqLMP}. 
For any ball $B_{2R} (y) \subset \omega$ and any $ s > 0$, 
there exists $C = C(s,r,\|a\|_{L^r(B_{2R}(y))},R) > 0$ such that
\[ \sup_{B_R(y)}u^{+} \leq C \Big[ \Big(\int_{B_{2R}(y)} (u^{+})^s dx \Big)^{1/s}
 + \|b^+\|_{L^r(B_{2R}(y))}\Big] \,.\]
\end{lemma}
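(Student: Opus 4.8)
The plan is to follow the classical Moser iteration / De Giorgi–Nash–Moser scheme, adapted to the variational (weak-solution) setting with the zero-order term $a(x)u$. Since $u$ is only a \emph{lower} solution, the natural object is $u^+$, and I would work exclusively with it. The target is an $L^\infty$ bound on $B_R(y)$ in terms of an $L^s$ norm on $B_{2R}(y)$ plus $\|b^+\|_{L^r(B_{2R}(y))}$; by a standard reduction it suffices to prove the case $s=2$ (or indeed $s\geq 2$ directly), and then recover all $0<s<2$ via an interpolation/Young trick that absorbs the sup-norm on the right. To lighten notation I would translate so $y=0$ and, after the iteration is set up on dyadic balls, rescale to $R=1$, tracking the $R$-dependence of the constants.

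The core steps, in order: (1) \textbf{Test-function choice.} For $k\geq 0$ and $\beta\geq 1$ set $\bar u = u^+ + \|b^+\|_{L^r}=:u^++k$ (adding $k$ avoids division by zero and carries the inhomogeneity), truncate $\bar u$ at level $\ell$ to get $\bar u_\ell=\min(\bar u,\ell)$, and use $\varphi = \eta^2(\bar u_\ell^{\,2\beta}\bar u - k^{2\beta+1})$ as test function, where $\eta$ is a cutoff between two dyadic balls with $|\nabla\eta|\lesssim 1/(\text{gap})$. This $\varphi$ is admissible ($\varphi\in H_0^1\cap L^\infty$, $\varphi\geq 0$) because $u^+\in H_0^1(\omega)$ locally and $\bar u_\ell$ is bounded. (2) \textbf{Caccioppoli-type estimate.} Plug $\varphi$ into the lower-solution inequality; the term $\int\nabla u\cdot\nabla\varphi$ produces a good term $\gtrsim\int\eta^2\bar u_\ell^{2\beta}|\nabla\bar u|^2$ (restricted to $\{u>0\}$, where $\nabla u=\nabla\bar u$) plus cross terms with $\nabla\eta$ that are absorbed by Young's inequality; the zero-order term $\int a(x)u\,\varphi$ and the right-hand side $\int b\,\varphi$ are estimated by Hölder using $a,b\in L^r$, $r>N/2$, against powers of $\bar u$ in $L^{2^*}$-type norms. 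Setting $w=\eta\,\bar u_\ell^{\,\beta}\bar u^{1/2}$ (or a similar power), this yields $\int|\nabla w|^2 \lesssim (1+\beta)^2\,C(\|a\|_{L^r},\text{gap})\int_{\text{larger ball}}\bar u^{2\beta+2}$. (3) \textbf{Sobolev + iteration.} Apply Sobolev ($\|w\|_{L^{2^*}}^2\lesssim\|\nabla w\|_{L^2}^2$, with the lower-order adjustment needing the $L^r$, $r>N/2$, exponent bookkeeping so that the "bad" exponent $2^*\cdot\frac{r}{r-?}$ still beats $2$) to upgrade integrability by the fixed factor $\chi=2^*/2>1$; let $\ell\to\infty$ by monotone convergence to replace $\bar u_\ell$ by $\bar u$; then iterate over $\beta_j$ with $2\beta_{j+1}+2=(2\beta_j+2)\chi$ and radii shrinking from $2R$ to $R$. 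The product of constants $\prod_j\big((1+\beta_j)^2 C\,4^j/\text{gap}_j^2\big)^{\chi^{-j}}$ converges, giving $\|\bar u\|_{L^\infty(B_R)}\leq C'\,\|\bar u\|_{L^2(B_{2R})}$, i.e. the claim for $s=2$. (4) \textbf{From $s=2$ to general $s>0$.} Use $\|\bar u\|_{L^2(B_{2R})}\leq \|\bar u\|_{L^\infty(B_{2R})}^{1-s/2}\|\bar u\|_{L^s(B_{2R})}^{s/2}$ together with a covering/interpolation argument on intermediate balls and Young's inequality to absorb the $L^\infty$ factor, yielding the stated estimate with the $s$-dependence in $C$. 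Unwinding $\bar u=u^++\|b^+\|_{L^r}$ produces exactly the right-hand side in the statement.

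The main obstacle I expect is the careful handling of the zero-order term $a(x)u$ and the inhomogeneity $b(x)$ \emph{without any sign assumption on $a$}: one must exploit $r>N/2$ so that, after Hölder and Sobolev, the $a$-term can either be absorbed into the Dirichlet energy (when the cutoff ball is small enough — hence the dependence of $C$ on $\|a\|_{L^r(B_{2R})}$ and on $R$) or handled by the "smallness of the domain" trick à la Stampacchia; getting the iteration constants to stay summable while this absorption happens at every step is the delicate bookkeeping. The second, more routine but fiddly, obstacle is the final interpolation from $s=2$ down to arbitrary small $s>0$, where one must iterate the $L^\infty$-vs-$L^s$ inequality on a sequence of shrinking balls so that the $L^\infty$ term appears with a coefficient $<1$ and can be absorbed. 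Everything else — the choice of test function, Caccioppoli, Sobolev, Moser iteration — is standard and I would state it compactly, citing e.g. \cite{L_U_1968} or the analogous treatments used elsewhere in the paper for the structural parts.
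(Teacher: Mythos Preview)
Your proposal is correct and outlines exactly the classical Moser-iteration proof (test function $\eta^2(\bar u_\ell^{2\beta}\bar u - k^{2\beta+1})$, Caccioppoli, Sobolev, iteration, then the interpolation/covering step to reach small $s$) that underlies the references the paper invokes. The paper itself gives no argument: it simply cites \cite[Theorem 8.17]{G_T_2001_S_Ed} for the case $s>1$ and \cite[Corollary 3.10]{M_Z_1997} for the passage to arbitrary $s>0$, noting in a remark that the Gilbarg--Trudinger proof, stated there for $a\in L^\infty$, goes through unchanged for $a\in L^r$ with $r>N/2$---precisely the absorption you flag as the main bookkeeping point.
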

%{\color{red}Verifier la dependance des constantes en particulier independance par rapport a y}
\begin{proof}
See for instance \cite[Theorem 8.17]{G_T_2001_S_Ed} and  \cite[Corollary 3.10]{M_Z_1997}.
\end{proof}

\begin{lemma} \rm\textbf{(Boundary  Local Maximum Principle)} \label{BLMP}
\it Under the assumption \eqref{hypLMP}, assume that $u \in H^1(\omega)$ is a lower solution of \eqref{eqBC} 
and let $x_0 \in \partial \omega$. For any $R > 0$ and any $ s > 0$, there exists 
$C = C(s,r,\|a\|_{L^r(B_{2R}(x_0) \cap \omega)},R) > 0$ such that
\[ \sup_{B_R(x_0) \cap \omega}u^{+} \leq C \Big[ \Big(\int_{B_{2R}(x_0) \cap \omega} (u^{+})^s dx 
\Big)^{1/s} + \|b^+\|_{L^r(B_{2R}(x_0) \cap \omega)}\Big] \,.\]
\end{lemma}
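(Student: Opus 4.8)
\textbf{Proof proposal for Lemma \ref{BLMP}.}

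The plan is to reduce the boundary estimate to the interior one (Lemma \ref{LMP}) by a reflection-type argument together with an extension of the lower solution. First I would remark that, since $u \in H^1(\omega)$ is a lower solution of \eqref{eqBC}, by definition $u^+ \in \HH$; hence the function $\bar u$, defined as $u^+$ on $\omega$ and $0$ on $\RN \setminus \omega$, belongs to $H^1(\RN)$. I would then check that $\bar u$ is a lower solution of a suitable equation on the whole ball $B_{2R}(x_0)$. Concretely, for any nonnegative test function $\varphi \in H^1_0(B_{2R}(x_0)) \cap L^\infty(B_{2R}(x_0))$, one wants
\[
\int_{B_{2R}(x_0)} \nabla \bar u \, \nabla \varphi \, dx \leq \int_{B_{2R}(x_0)} \big( \bar b(x) + \bar a^-(x)\, \bar u \big) \varphi \, dx \,,
\]
where $\bar a$, $\bar b$ are the zero extensions of $a$, $b^+$ to $B_{2R}(x_0)$. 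The point is that $\varphi \chi_\omega = \varphi - (\text{something vanishing where } \bar u = 0)$ can be used as a test function in the weak formulation on $\omega$: more carefully, one uses $\varphi$ restricted to $\omega$ (which lies in $\HLLinfty$ up to an approximation, since $\varphi$ need not vanish on $\partial\omega \cap B_{2R}(x_0)$) — here the fact that $u^+ \in \HH$ is crucial, because it lets us rewrite $\int_\omega \nabla u^+ \nabla \varphi$ by first replacing $\varphi$ with $\varphi \psi$ for cutoffs $\psi$ and passing to the limit, exploiting that $\nabla u^+ = 0$ a.e. on $\{u \leq 0\}$ and that $u^+$ already has zero boundary trace.

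Once $\bar u$ is shown to be a lower solution on $B_{2R}(x_0)$ of $-\Delta v + \bar a v = \bar b + (\bar a + \bar a^-) \bar u$, or more simply of $-\Delta v = \tilde b$ with $\tilde b := \bar b + \bar a^- \bar u + \bar a^+ \bar u \in L^r(B_{2R}(x_0))$ absorbed appropriately, I would apply Lemma \ref{LMP} (the interior Local Maximum Principle) on the pair of concentric balls $B_R(x_0) \subset B_{2R}(x_0)$. This yields
\[
\sup_{B_R(x_0)} \bar u \leq C \Big[ \Big( \int_{B_{2R}(x_0)} \bar u^{\,s} \, dx \Big)^{1/s} + \| \tilde b \|_{L^r(B_{2R}(x_0))} \Big] \,,
\]
and since $\bar u = u^+$ on $\omega$ and $\bar u = 0$ outside, the left side controls $\sup_{B_R(x_0) \cap \omega} u^+$ and the integral on the right equals $\int_{B_{2R}(x_0) \cap \omega}(u^+)^s\,dx$. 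Finally I would estimate $\| \tilde b \|_{L^r}$ by $\| b^+ \|_{L^r(B_{2R}(x_0)\cap\omega)}$ plus a term $\| a \|_{L^r} \| u^+ \|_{L^\infty}$; this last term is the annoyance, since it reintroduces $\|u^+\|_\infty$. To avoid that, rather than absorbing the zero-order term into the right-hand side I would keep it on the left and invoke the version of Lemma \ref{LMP} \emph{with} the coefficient $a$ present (which is exactly what is stated: the constant depends on $\|a\|_{L^r}$), so that no $L^\infty$ bound on $u$ is needed — the interior statement already handles $a u$ directly.

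The main obstacle, and the step deserving genuine care, is the verification that the zero extension $\bar u$ is a lower solution across the boundary portion $\partial\omega \cap B_{2R}(x_0)$: one must legitimately use test functions $\varphi$ that do \emph{not} vanish on $\partial\omega$. This is where the hypothesis $u^+ \in \HH$ (part of Definition \ref{lower-upper}) does the work — it encodes that $u^+$ vanishes on $\partial\omega$ in the trace sense, so that $\varphi|_\omega$, although not in $\HH$, pairs correctly against $\nabla u^+$ after a density argument (approximate $u^+$ in $\HH$ by $C^\infty_c(\omega)$ functions, test with $\varphi$, and pass to the limit). A minor technical point is that $B_{2R}(x_0)$ may stick out of any fixed neighborhood, but since $\omega$ is merely assumed bounded this causes no difficulty; boundedness of $\omega$ also guarantees $\bar a, \bar b \in L^r(B_{2R}(x_0))$ with the stated norm control. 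Everything else is a direct quotation of Lemma \ref{LMP}.
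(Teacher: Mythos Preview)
The paper does not actually prove this lemma; it simply refers to \cite[Theorem 8.25]{G_T_2001_S_Ed} and \cite[Corollary 3.10, Theorem 3.11]{M_Z_1997}, where the boundary estimate is obtained by running the Moser iteration directly with test functions of the form $\eta^2 (u^+)^{\beta}$ --- the hypothesis $u^+\in H^1_0(\omega)$ is precisely what makes these admissible when the cutoff $\eta$ does not vanish on $\partial\omega\cap B_{2R}(x_0)$.

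Your reduction to the interior case via the zero extension of $u^+$ is a legitimate and pleasant alternative, and once Lemma~\ref{LMP} is available it is shorter than redoing the iteration. One point deserves tightening: your description ``approximate $u^+$ in $H^1_0(\omega)$ by $C^\infty_c(\omega)$ functions, test with $\varphi$, and pass to the limit'' is not quite the right mechanism, since the weak inequality for $u$ only accepts test functions in $H^1_0(\omega)\cap L^\infty(\omega)$ and $\varphi|_\omega$ is not there. The clean way is to test with $\psi=\varphi\,G_\epsilon(u^+)$, where $G_\epsilon(s)=\min(1,s^+/\epsilon)$; this function \emph{does} lie in $H^1_0(\omega)\cap L^\infty(\omega)$, the cross term $\int_\omega \varphi\,G_\epsilon'(u^+)|\nabla u^+|^2$ is non-negative and can be dropped, and sending $\epsilon\to 0$ gives
\[
\int_\omega \nabla u^+ \,\nabla\varphi \,dx + \int_\omega a(x)\,u^+\varphi\,dx \;\le\; \int_\omega b^+(x)\,\varphi\,dx.
\]
Thus $\bar u$ is a lower solution of $-\Delta v + \bar a\,v = \overline{b^+}$ on $B_{2R}(x_0)$ with $\bar a,\overline{b^+}$ the zero extensions, and Lemma~\ref{LMP} applies verbatim with coefficient $\bar a$; the constant then depends on $\|\bar a\|_{L^r(B_{2R}(x_0))}=\|a\|_{L^r(B_{2R}(x_0)\cap\omega)}$ exactly as stated, and your worry about a stray $\|u^+\|_{L^\infty}$ factor never arises. (The word ``reflection'' in your opening sentence is a misnomer --- no reflection is used, only zero extension.)
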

% {\color{red}Verifier la dependance des constantes en particulier independance par rapport a y}
\begin{proof}
See for instance \cite[Theorem 8.25]{G_T_2001_S_Ed} and \cite[Corollary 3.10 and  Theorem 3.11]{M_Z_1997}.
\end{proof}

\begin{remark}
Lemmas \ref{LMP} and \ref{BLMP} proof's are done in \cite{G_T_2001_S_Ed} for $a \in L^{\infty}(\omega)$ 
and $s>1$. Nevertheless, as it is remarked on page 193 of that book, the proof is valid for
$a \in L^r(\omega)$ with $r > N/2$ and,  following closely the proof of \cite[Corollary 3.10]{M_Z_1997}, the proofs can be extended for any $s > 0$.
\end{remark}

\begin{lemma}\rm\textbf{(Weak Harnack Inequality)} \label{WHI}
\it Under the assumption \eqref{hypLMP}, assume that $u \in H^1(\omega)$ is a non-negative upper solution
 of \eqref{eqBC}. Then, for any ball $B_{4R}(y) \subset \omega$ 
 and any $1 \leq s < \frac{N}{N-2}$ there exists $C = C(s,r,\|a\|_{L^r(B_{4R}(y))},R) > 0$ such that
\[\inf_{B_R(y)} u  \geq C \Big[ \Big( \int_{B_{2R}(y)} u^s dx \Big)^{1/s}
 - \|b^-\|_{L^{r}(B_{4R}(y))} \Big] \,.\]
\end{lemma}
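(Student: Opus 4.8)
The statement to prove is the classical Weak Harnack Inequality for non-negative upper solutions of $-\Delta u + a(x)u = b(x)$ in a bounded domain, with $a, b \in L^r(\omega)$, $r > N/2$.

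The plan is the following. First I would reduce to the model case by absorbing the lower-order and right-hand side terms. Let $B_{4R}(y) \subset \omega$ and set $k := \|b^-\|_{L^r(B_{4R}(y))}$. The natural idea is to consider $w := u + k$ (or more precisely a shifted function) so that $w$ becomes a non-negative supersolution of an inequality of the form $-\Delta w + \tilde a(x) w \geq 0$ in $B_{4R}(y)$ for a suitable $\tilde a \in L^r$ with controlled norm; indeed $-\Delta u \geq b(x) - a(x)u \geq -b^-(x) - a(x)u$, and one writes $-b^-(x) = -\frac{b^-(x)}{u+k}(u+k) \geq -\frac{b^-(x)}{k}(u+k)$ on the set where this makes sense, so that with $\tilde a := a + b^-/k$ one has $\tilde a \in L^r(B_{4R}(y))$ with $\|\tilde a\|_{L^r} \leq \|a\|_{L^r} + 1$ (after a harmless rescaling one may even reduce to $k=1$ or treat $k=0$ separately by adding $\epsilon$ and passing to the limit). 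Then it suffices to prove: for a non-negative supersolution $w$ of $-\Delta w + \tilde a w \geq 0$ in $B_{4R}(y)$, one has $\big(\fint_{B_{2R}} w^s\big)^{1/s} \leq C \inf_{B_R} w$ for $1 \le s < N/(N-2)$, with $C$ depending only on $s, r, \|\tilde a\|_{L^r}, R$.

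Next I would establish this normalized estimate by the standard Moser iteration / De Giorgi–Nash–Moser machinery. Testing the weak inequality with $\varphi = w^{-\gamma}\eta^2$ for appropriate exponents $\gamma$ and a cutoff $\eta$, and using Sobolev's inequality, one obtains reverse-Hölder / Caccioppoli-type estimates that, iterated, yield on one hand an $L^p$-to-$L^\infty$ bound for negative powers of $w$ (i.e. $\sup_{B_R} w^{-1} \leq C (\fint_{B_{2R}} w^{-p})^{1/p}$ for all $p>0$, equivalently a lower bound for $\inf_{B_R} w$), and on the other hand, through a logarithmic estimate for $\log w$ together with the John–Nirenberg inequality, the crossover from negative to positive exponents, giving $(\fint_{B_{2R}} w^s)^{1/s} \leq C (\fint_{B_{2R}} w^{-s'})^{-1/s'}$ for small $s, s' > 0$, with the positive-side iteration pushing $s$ up to the Sobolev-critical threshold $N/(N-2)$. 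Combining the three pieces gives the claim. Alternatively, and more economically for a ``see for instance'' style lemma, one simply cites the literature: the result with $a \in L^\infty$ is \cite[Theorem 8.18]{G_T_2001_S_Ed}, and the extension to $a, b \in L^r$ with $r > N/2$ follows exactly as in the remark after Lemmas \ref{LMP} and \ref{BLMP}, i.e. by the absorption trick above reducing to a supersolution of $-\Delta w + \tilde a w \ge 0$ with $\tilde a \in L^r$, for which the Moser iteration goes through since the extra $L^r$ term is handled by Sobolev embedding precisely when $r > N/2$ (see also \cite[Theorem 3.13]{M_Z_1997}).

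The main obstacle, if one writes out a self-contained proof rather than citing, is the passage across the exponent $0$ — controlling $\log w$ and invoking a John–Nirenberg type argument to connect the estimate for negative powers with that for positive powers; the two Moser iterations (negative powers, and positive powers up to $N/(N-2)$) are routine but bookkeeping-heavy, and one must track carefully that the constant $C$ depends on $\|a\|_{L^r}$ only through an upper bound (so that it stays uniform as the coefficient varies, as needed later in the paper). Given the phrasing of the companion lemmas, I expect the authors to simply give references; but the honest verification point is that the $L^r$-smallness of the localized coefficient — achieved by taking the radius $R$ small, or rather absorbing a fixed $\|\tilde a\|_{L^r}$ via the iteration since $r>N/2$ makes the relevant Sobolev-type term subcritical — is what makes the zero-order perturbation harmless.
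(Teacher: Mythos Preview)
Your proposal is correct and matches the paper's approach exactly: the authors simply write ``See for instance \cite[Theorem 8.18]{G_T_2001_S_Ed} and \cite[Theorem 3.13]{M_Z_1997}'', which are precisely the references you identify. Your additional sketch of the underlying Moser iteration and the absorption trick for the $L^r$ lower-order term is accurate and goes beyond what the paper provides.
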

%{\color{red}Pourquoi mettre la dependance en $R$ ici et pas ailleurs?}
%{\color{red}Es-tu sur que les d\'ependence ne sont pas les m\^eme pour les trois lemmes?}
%Verifier l'independance de la constante par rapport a $y$}
\begin{proof}
See for instance \cite[Theorem 8.18]{G_T_2001_S_Ed} and   \cite[Theorem 3.13]{M_Z_1997}.
\end{proof}

Now, inspired by \cite[Lemma 3.2]{B_C_1998} (see also \cite[Appendix A]{D_2011}$\,$), we establish the following version of the Brezis-Cabr\'e Lemma.
%{\color{red}J'ai cru voir aussi quelque chose dans Brezis-Nirenberg, a voir}
\begin{lemma} \label{bcLemma1}
Let $\omega \subset \RN$, $N \geq 2$,  be a bounded domain with boundary $\partial \omega$ of class $\mathcal{C}^{1,1}$ and let $a \in L^{\infty}(\omega)$ and $f \in L^1(\omega)$ be non-negative functions. Assume that $ u \in H^1(\omega)$ is an upper solution of
\[ -\Delta u + a(x) u = f(x)\,, \quad u \in H_0^1(\omega)\,.\]
Then, for every $B_{2R} (y)\subset \omega$, there exists 
$C = C(R,y,\omega,\|a\|_{\infty}) > 0$ such that
\[ \inf_{\omega} \frac{u(x)}{d(x,\partial \omega)} \geq C \int_{B_{R}(y)} f(x) \,dx\,. \]
\end{lemma}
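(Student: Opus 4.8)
The plan is to establish this version of the Brezis--Cabré Lemma by reducing it to a comparison with an explicit barrier function, using the weak Harnack inequality (Lemma~\ref{WHI}) to get a lower bound in the interior and the Hopf-type structure near $\partial\omega$ to convert this into the desired bound on $u(x)/d(x,\partial\omega)$. First I would fix $B_{2R}(y)\subset\omega$. Since $u$ is a non-negative upper solution of $-\Delta u + a(x)u = f$, and $a,f\geq 0$, I would introduce an auxiliary function $w$ solving $-\Delta w + a(x)w = f\,\chi_{B_R(y)}$ in $\omega$ with $w\in H_0^1(\omega)$; by the maximum principle $0\leq w\leq u$ (comparing $u-w$, an upper solution of the problem with zero right-hand side and zero boundary data), so it suffices to prove the estimate for $w$. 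This reduces matters to a \emph{fixed} problem with right-hand side of controlled support, whose solution depends linearly and monotonically on $\|f\chi_{B_R(y)}\|_{L^1}$; by scaling it is then enough to treat, say, $\int_{B_R(y)}f\,dx = 1$ and produce a lower bound $C=C(R,y,\omega,\|a\|_\infty)>0$ independent of $f$ in that normalization.

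Next I would get an interior lower bound. Applying Lemma~\ref{WHI} to the non-negative upper solution $w$ on a ball $B_{4\rho}(y')\subset\omega$ chosen so that $B_{2\rho}(y')$ captures a definite portion of $B_R(y)$ (one can take $\rho$ comparable to $R$ and $y'=y$ provided $B_{4R}(y)\subset\omega$; otherwise cover $B_R(y)$ by finitely many such balls and use that $\int_{B_R(y)}f=1$ forces $f$ to have mass $\geq c$ in at least one of them), I obtain $\inf_{B_\rho(y')} w \geq C\big[(\int_{B_{2\rho}(y')}w^s)^{1/s} - \|b^-\|_{L^r}\big]$ with $b = f\chi_{B_R(y)}\geq 0$, so the negative-part term vanishes. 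To close the circle I still need a \emph{positive} lower bound for $\int_{B_{2\rho}(y')}w^s$, which I would obtain by testing the equation for $w$ against a fixed non-negative cutoff $\varphi$ supported in $B_{2\rho}(y')$ with $\varphi\equiv 1$ on $B_R(y)$: then $\int \nabla w\nabla\varphi + \int a w\varphi = \int f\varphi = \int_{B_R(y)} f = 1$, and since $\|a\|_\infty, \|\nabla\varphi\|_\infty$ are controlled, this forces $\|w\|_{L^1(B_{2\rho})}$ (hence $\|w\|_{L^s}$ via $L^1$--$L^s$ comparison on a ball, or by first bounding $\|w\|_{L^1}$ from below directly) to be at least a fixed constant. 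Combining, $\inf_{B_\rho(y')} w \geq c_0 > 0$ with $c_0$ depending only on $R,y,\omega,\|a\|_\infty$.

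Finally I would propagate this interior bound to a bound on $w(x)/d(x,\partial\omega)$ throughout $\omega$. Let $e$ be the torsion function, i.e. the solution of $-\Delta e = 1$ in $\omega$, $e\in H_0^1(\omega)$; since $\partial\omega$ is $\mathcal{C}^{1,1}$, we have $e(x)\asymp d(x,\partial\omega)$ with constants depending only on $\omega$, and moreover $e$ is bounded. Consider $v := \tfrac{c_0}{K}\,e$ where $K := \|e\|_\infty + \text{(a constant absorbing }\|a\|_\infty\|e\|_\infty)$; one checks that on $\omega\setminus \overline{B_\rho(y')}$ the function $w$ is an upper solution of $-\Delta z + a z = 0$ (since $f\geq 0$), while $v$ satisfies $-\Delta v + a v \leq \tfrac{c_0}{K}(1 + \|a\|_\infty\|e\|_\infty)\leq c_0$... — here I would instead take $v$ to solve $-\Delta v + a^+ v \leq 0$-type inequality, or more cleanly compare $w$ with the solution $\psi$ of $-\Delta\psi + \|a\|_\infty\psi = 0$ in $\omega\setminus\overline{B_\rho(y')}$, $\psi = c_0$ on $\partial B_\rho(y')$, $\psi = 0$ on $\partial\omega$, and invoke $\psi(x)\geq c\,d(x,\partial\omega)$ from the boundary Hopf lemma / boundary regularity for such a problem on a $\mathcal{C}^{1,1}$ domain. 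By the weak maximum principle $w\geq\psi$ on $\omega\setminus\overline{B_\rho(y')}$, and on $\overline{B_\rho(y')}$ we have $w\geq c_0\geq c\,d(x,\partial\omega)\cdot(\sup_\omega d)/\inf_{B_\rho(y')} d$-type bound trivially; hence $\inf_\omega w(x)/d(x,\partial\omega) \geq C > 0$.

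I expect the main obstacle to be the final step: rigorously producing the comparison function whose ratio to $d(x,\partial\omega)$ is bounded below on a $\mathcal{C}^{1,1}$ domain with an $L^\infty$ zeroth-order coefficient. This is exactly where the $\mathcal{C}^{1,1}$ regularity of $\partial\omega$ is used (it guarantees a uniform interior ball condition, hence a Hopf lemma with uniform constants, and that $d(\cdot,\partial\omega)$ is itself regular near the boundary so that it can serve as — or be compared to — a subsolution). The interior weak-Harnack part is routine given Lemma~\ref{WHI}; the delicate points are handling the case where $B_{4R}(y)\not\subset\omega$ (covered by the chaining argument above) and making every constant manifestly independent of $f$ after the $L^1$-normalization.
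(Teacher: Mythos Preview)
Your overall two-step architecture (interior lower bound, then boundary propagation via comparison on $\omega\setminus\overline{B_\rho}$) matches the paper's. The boundary step --- comparing with the solution $\psi$ of $-\Delta\psi+\|a\|_\infty\psi=0$ in $\omega\setminus\overline{B_{R/2}(y)}$, $\psi=1$ on $\partial B_{R/2}$, $\psi=0$ on $\partial\omega$, and invoking $\psi\geq c\,d(\cdot,\partial\omega)$ --- is exactly what the paper does; the Hopf-type lower bound you flag as the ``main obstacle'' is taken from \cite[Theorem~3]{B_N_1993}, which supplies $\psi\geq c\,d(\cdot,\partial\omega)$ on a $\mathcal{C}^{1,1}$ domain with $L^\infty$ zeroth-order coefficient.

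The genuine difference is in the interior step. The paper does \emph{not} introduce your auxiliary $w$ (note that since $f$ is only $L^1$, the solution $w$ of $-\Delta w+aw=f\chi_{B_R}$ need not lie in $H^1_0$ for $N\geq3$, so Lemma~\ref{WHI} as stated does not directly apply to it). Instead the paper works with $u$ itself and uses a \emph{dual} function: let $\xi\in H^1_0(\omega)$ solve $-\Delta\xi+\|a\|_\infty\xi=\chi_{B_R(y)}$. Again by \cite[Theorem~3]{B_N_1993} one has $\xi\geq c_3\,d(\cdot,\partial\omega)$, hence $\xi\geq c_3 R$ on $B_R(y)$. Testing the upper-solution inequality for $u$ against $\xi$ (using $u\geq0$ and $\partial_n\xi\leq0$ on $\partial\omega$) gives in one stroke
\[
\int_{B_R(y)}u\;=\;\int_\omega u\bigl(-\Delta\xi+\|a\|_\infty\xi\bigr)\;\geq\;\int_\omega f\,\xi\;\geq\;c_3 R\int_{B_R(y)}f,
\]
after which Lemma~\ref{WHI} with $b=0$ yields $\inf_{B_{R/2}(y)}u\geq c\int_{B_R(y)}f$. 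This bypasses the reduction to $w$, the cutoff-testing argument, and the covering needed because only $B_{2R}(y)\subset\omega$ (not $B_{4R}$) is assumed. Your route can be salvaged --- drop $w$, test $u$ directly against the cutoff, and cover $B_R(y)$ by finitely many balls $B_{r_i}(y_i)$ with $B_{4r_i}(y_i)\subset\omega$ --- but the dual-function trick is cleaner and sidesteps the $L^1$-data regularity issue entirely.
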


\begin{proof}
First of all, as $a$ and $f$ are non-negative, by the weak maximum principle, it follows that
\[\inf_{\omega} \frac{u(x)}{d(x,\partial \omega)}\geq 0\,.\]
%Also observe that $u \in H^1(\omega)$ is an upper solution to 
%\[ -\Delta u + ||a||_{\infty} u = f(x)\,, \quad u \in H_0^1(\omega)\,.\]
Now  let $B_{2R} (y)\subset \omega\,.$ By the above inequality, we can assume without loss of generality that
\[
\int_{B_R(y)} f(x)\, dx > 0\,.
\]
We split the proof into three steps.
\medbreak
\noindent \textbf{Step 1:} \textit{There exists $c_1 = c_1(R,y,\omega,\|a\|_{\infty}) > 0$ such that}
\begin{equation} \label{bc3}
\frac{u(x)}{d(x,\partial \omega)} \geq c_1 \int_{B_R(y)} f(x) \,dx\,, \quad \forall\ x \in  \overline{B_{R/2}(y)}\,.
\end{equation}
%\medbreak
\indent Since $f$ is non-negative, observe that $u$ is a non-negative upper solution of 
\[ -\Delta u + a(x) u = 0\,, \quad u \in H_0^1(\omega)\,.\]
Hence, by Lemma \ref{WHI}, there exists a constant 
$c_2 = c_2(R,\|a\|_{\infty}) > 0$ such that
\begin{equation} \label{bc1}
u(x) \geq c_2\int_{B_{R}(y)} u\, dx\,, \quad \forall\ x \in \overline{B_{R/2}(y)}\,.
\end{equation}
Now, let us denote by $\xi$ the solution of
\begin{equation}
\left\{
\begin{aligned}
 -\Delta \xi + \|a\|_{\infty}\xi & = \chi_{B_{R}(y)}\,, & \textup{ in } \omega\,,\\
 \xi & = 0\,, & \textup{ on } \partial \omega\,.
\end{aligned}
\right.
\end{equation}
By \cite[Theorem 3]{B_N_1993}, there exists a constant $c_3 = c_3(R,y, \omega,\|a\|_{\infty}) > 0$ such that, for all $ x \in \omega$, $\xi(x) \geq c_3 d(x,\partial \omega)$. Thus, since $B_{2R}(y) \subset \omega\,,$ $f$ is non-negative and $d(x,\partial \omega)\geq R$ for $x\in B_R(y)$,
it follows that
\begin{equation*}
 \int_{B_{R}(y)} u\, dx = \int_{\omega} u\big(-\Delta \xi + \|a\|_{\infty}\xi\big)\, dx 
 \geq  \int_{\omega} f(x)\,\xi\, dx \geq c_3 \int_{\omega} f(x) \,d(x,\partial \omega)\, dx  
 \geq c_3 R \int_{B_R(y)} f(x)\, dx\,.
\end{equation*}
Hence, substituting the above information in \eqref{bc1} we obtain for $c_4 = c_2c_3R$ 
\begin{equation} \label{bc2}
u(x) \geq c_4 \int_{B_R(y)} f(x) \,dx\,, \quad \forall\ x \in  \overline{B_{R/2}(y)}\,,
\end{equation}
%\medbreak
from which, since $\omega \subset \R^N$ is bounded, \eqref{bc3} follows.
\medbreak

\noindent \textbf{Step 2:} \textit{There exists $c_5 = c_5(R,y,\omega,\|a\|_{\infty}) > 0$ such that}
\begin{equation} \label{bc5}
\frac{u(x)}{d(x,\partial \omega)} \geq c_5 \int_{B_R(y)} f(x) \,dx\,, \quad 
\forall\ x \in \omega \setminus  \overline{B_{R/2}(y)}.
\end{equation}
%\medbreak
\indent Let $w$ be the unique solution of 
\begin{equation} \label{bc4}
\left\{
\begin{aligned}
-\Delta w + \|a\|_{\infty}w & = 0 \,, \quad & \textup{ in } \omega \setminus \overline{B_{R/2}(y)}\,, \\
w & = 0\,, & \textup{ on } \partial \omega\,,\\
w & = 1\,, & \textup{ on } \partial B_{R/2}(y)\,.
\end{aligned}
\right.
\end{equation}
Still by \cite[Theorem 3]{B_N_1993}, there exists $c_6 = c_6(R,y,\omega,\|a\|_{\infty}) > 0$ such that $w(x) \geq c_6 d(x,\partial \omega)$ for all 
$x \in \omega \setminus \overline{B_{R/2}(y)}$. %{\color{red}Dependence de $c_6$}
 On the other hand, let us introduce
\[ v(x) = \frac{u(x)}{c_4 \int_{B_R(y)} f(x) \,dx}\,,\]
with $c_4$ given in \eqref{bc2}. Observe that $v$ is an upper solution of \eqref{bc4}. Hence, by the standard comparison principle, it follows that $v(x) \geq w(x)$ for all $x \in \omega \setminus  \overline{B_{R/2}(y)}$ and \eqref{bc5} follows.
\medbreak

\noindent \textbf{Step 3:} \textit{Conclusion.}
\medbreak
The result follows from \eqref{bc3} and \eqref{bc5}. 
\end{proof}

\section{Boundary weak Harnack inequality} \label{appBWHI}

In this section we present a \textit{boundary weak Harnack inequality} that will be central in the proof of Theorem \ref{aPrioriBound}. As we believe this type of inequality has its own interest, we establish it in the more general framework of the $p$-Laplacian. Recalling that $\Delta_p u = \textup{div}(|\nabla u|^{p-2}\nabla u)$ for $1<p<\infty$, we introduce the boundary value problem
\begin{equation} \label{eqBWHIP}
\pLaplac u + a(x)|u|^{p-2}u = 0\,, \quad u \in W_0^{1,p}(\omega)\,.
\end{equation} 
Let us also recall that $u \in W^{1,p}(\omega)$ is an {\it upper solution of \eqref{eqBWHIP}} if $u^{-} \in \Wop$ and, for all $\varphi \in \Wop$ with $\varphi \geq 0$, it follows that
\begin{equation*}
\int_{\omega} |\nabla u|^{p-2} \nabla u\, \nabla \varphi \, dx + \int_{\omega}  a(x)|u|^{p-2}u \,\varphi \, dx\geq  0\,. 
\end{equation*} 
We then prove the following result.

\begin{theorem} \rm \textbf{(Boundary Weak Harnack Inequality)} \label{BWHIP}
Let $\omega \subset \RN$, $N \geq 2$, be a bounded domain with boundary $\partial \omega$ of class 
$\mathcal{C}^{1,1}$ and let $a \in L^{\infty}(\omega)$ be a non-negative function. Assume that $u$ is a non-negative upper solution of \eqref{eqBWHIP}
and let $x_0 \in \partial \omega$. Then, there exist  $\overline R>0$, 
$\epsilon = \epsilon(p, \overline{R},\|a\|_{\infty},\omega) > 0$ and 
$C = C(p,\overline{R}, \epsilon,\|a\|_{\infty},\omega) > 0$ such that, for all
 $R \in(0,\overline R]\,,$
\[ \inf_{B_R(x_0) \cap \omega} \frac{u(x)}{d(x,\partial \omega)} 
\geq C \,\Big( \int_{B_R(x_0) \cap \omega} 
\Big( \frac{u(x)}{d(x,\partial \omega)} \Big)^{\epsilon} \,dx \Big)^{1/\epsilon}\,. \]
\end{theorem}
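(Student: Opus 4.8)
The strategy is to reduce the boundary weak Harnack inequality to the interior weak Harnack inequality (Lemma~\ref{WHI}) together with the Brezis--Cabr\'e-type lower bound (Lemma~\ref{bcLemma1}), by passing to a locally flattened chart near $x_0$ and running a covering/chaining argument in the spirit of Sirakov~\cite{S_2017}. First I would fix $\overline R$ small enough that, in a neighbourhood of $x_0$, the boundary $\partial\omega$ can be straightened by a $\mathcal C^{1,1}$ diffeomorphism and the interior sphere condition holds with a uniform radius; since $a\in L^\infty(\omega)$ and $a\ge 0$, the transformed equation is still of the form covered by Lemmas~\ref{LMP}--\ref{WHI} (the $\mathcal C^{1,1}$ change of variables produces bounded measurable coefficients and a bounded zero-order term, and the sign of the zero-order coefficient only helps on the side we need). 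Because $u\ge 0$ is an upper solution of $\pLaplac u + a|u|^{p-2}u=0$, it is in particular a non-negative upper solution of $-\Delta_p u + \|a\|_\infty u^{p-1}=0$ after the usual manipulation; I will work with the quantity $v(x):=u(x)/d(x,\partial\omega)$, which is the natural object since the claimed inequality is an $L^\epsilon$-to-$L^\infty$ estimate for $v$.

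The core is a dyadic/Krylov--Safonov-flavoured argument. Cover $B_R(x_0)\cap\omega$ by a controlled number of balls: interior balls $B_{r}(y)$ with $B_{4r}(y)\subset\omega$, on which Lemma~\ref{WHI} gives $\inf_{B_r(y)}u\ge C(\int_{B_{2r}(y)}u^s)^{1/s}$ for some fixed $s\in(1,\frac N{N-2})$, and near-boundary pieces. On the interior balls one converts the bound on $u$ into a bound on $v=u/d$ using that $d(\cdot,\partial\omega)$ is comparable to a constant on each such ball (by the choice $B_{4r}(y)\subset\omega$, $d\sim r$ there). For the genuinely near-boundary part, Lemma~\ref{bcLemma1} (or rather its localized consequence, combined with the $\mathcal C^{1,1}$ boundary regularity and the Hopf-type bound $\xi\ge c\,d(\cdot,\partial\omega)$ already used in its proof) provides the lower bound $\inf_\omega u/d \ge C\int_{B_\rho(y)} f$ when $u$ is an upper solution of $-\Delta u + \|a\|_\infty u = f$; choosing $f$ to be a multiple of $\chi$ on a small interior ball where we already control $u$ from below, this transfers the interior lower bound up to the boundary with the correct $d(\cdot,\partial\omega)$ weight. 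Chaining finitely many such steps (a Harnack chain joining any point of $B_R(x_0)\cap\omega$ to a fixed interior ``good'' ball, with the number of links bounded in terms of $\overline R$ and $\omega$) yields a pointwise lower bound of the form $\inf_{B_R(x_0)\cap\omega} v \ge C\,(\int_{B_{R}(x_0)\cap\omega} v^s\,dx)^{1/s}$ for the fixed exponent $s$; the reduction from exponent $s$ down to \emph{some} small $\epsilon>0$ is standard (a John--Nirenberg / Bombieri--Giusti type iteration on $\log v$, or the elementary measure-theoretic argument that turns a weak-$L^p$ bound into an $L^\epsilon$ bound), using only that $v>0$ and the already-established chain estimate.

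The step I expect to be the main obstacle is the near-boundary transfer with the sharp $d(x,\partial\omega)$ weight: one must show that the lower bound propagates all the way up to points with $d(x,\partial\omega)$ arbitrarily small, with a constant independent of how close $x$ is to $\partial\omega$, and this is exactly where the $\mathcal C^{1,1}$ regularity, the uniform interior ball condition, and the Hopf-lemma estimate for the comparison function $\xi$ (as in the proof of Lemma~\ref{bcLemma1}, via \cite{B_N_1993}) must be combined carefully. A secondary technical point is that the covering number and all constants must be shown to depend only on $p$, $\overline R$, $\|a\|_\infty$, $\omega$ (and $\epsilon$), uniformly over $R\in(0,\overline R]$; this is handled by first proving the estimate at scale $R=\overline R$ and then rescaling, checking that the rescaled operator stays in the same class with comparable norms. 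Once these are in place, assembling the interior weak Harnack inequality, the Brezis--Cabr\'e lemma, the Harnack chain, and the $L^s\!\to\!L^\epsilon$ reduction gives the stated inequality.
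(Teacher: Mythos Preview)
Your plan differs substantially from the paper's, and there is a genuine gap in it.

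The paper does not use the Brezis--Cabr\'e lemma or Harnack chains at all to prove Theorem~\ref{BWHIP}. After flattening the boundary, the proof establishes the inequality on the model cube $Q_1$ (Theorem~\ref{BWHIC}) via a Krylov--Safonov-type argument: a \emph{Growth Lemma} (Lemma~\ref{GL}) says that if $|\{x\in Q_1: u(x)>x_N\}|\ge\nu$ then $u\ge k\,x_N$ on all of $Q_1$; this is proved by combining the interior weak Harnack inequality (away from $\{x_N=0\}$) with an explicit barrier $v_\delta$ constructed in a thin slab touching the flat boundary. The Growth Lemma is then fed into a Calder\'on--Zygmund / ink-spots iteration (Lemma~\ref{iterationLemma}, using Lemma~\ref{GISL}) to obtain the distribution-function decay $|\{x\in Q_1:u/x_N>M^j\}|<(1-\mu)^j$, from which the $L^\epsilon$ bound on $u/x_N$ follows by integrating the distribution function.

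The gap in your plan is the passage to the integral on the right-hand side. Brezis--Cabr\'e together with the interior weak Harnack inequality does give, after some work, a pointwise bound of the shape
\[
\inf_{B_R(x_0)\cap\omega}\frac{u}{d}\;\ge\;C\,\Big(\int_{B_{2\rho}(y)} u^s\,dx\Big)^{1/s}
\]
for a \emph{fixed interior} ball $B_{2\rho}(y)$; but this is not the boundary weak Harnack inequality, which requires the integral of $(u/d)^\epsilon$ over the \emph{whole} near-boundary region $B_R(x_0)\cap\omega$. A lower bound on $u/d$ does not control $\int (u/d)^\epsilon$ from above, and the Harnack chain you invoke cannot bridge this: the number of links from a point at distance $\delta$ from $\partial\omega$ to any fixed interior ball grows like $\log(1/\delta)$, so the constant is not uniform as $\delta\to 0$, contrary to your claim that the number of links is bounded in terms of $\overline R$ and $\omega$. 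The John--Nirenberg / Bombieri--Giusti step you mention only reduces the exponent once a weak-Harnack-type bound for $v=u/d$ is already in hand on all relevant scales; it cannot manufacture that bound. What is really needed is precisely a decay estimate for the distribution function of $u/d$, and that is the content of the paper's Growth Lemma and Lemma~\ref{iterationLemma}. (A secondary issue: Lemma~\ref{bcLemma1} is stated and proved only for the Laplacian; a $p$-Laplacian analogue would have to be proved separately, and its Step~1, which relies on testing against the linear solution $\xi$, does not carry over verbatim.)
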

\medbreak

As already indicated, in the proof of Theorem \ref{BWHIP}  we shall make use of some ideas from \cite{S_2017}.
\medbreak

Before going further, let us introduce some notation that we will be used throughout the section. We define
\[ r := r(N,p) = \left\{
\begin{aligned}
& \frac{N(p-1)}{N-p} &\textup{ if } p < N, \\
& +\infty & \textup{ if } p \geq N,
\end{aligned}
\right.
\]

\vspace{0.05cm}

%\medbreak
\noindent and denote by $Q_{\rho}(y)$ the cube of center $y$ and side of length $\rho$, i.e.
\[ Q_{\rho}(y) = \{ x \in \R^N: |x_i-y_i| < \rho/2 \textup{ for } i = 1, \ldots, N \}.
\]

\vspace{0.05cm}
\noindent In case the center of the cube is $\rho e$ with $e = (0,0,\ldots,1/2)$, we use the notation 
$Q_{\rho} = Q_{\rho}(\rho e)$. % Finally, for the lower boundary of these cubes we will use
% $ Q_{\rho}^0 = \{x \in \partial Q_{\rho}: x_N = 0\}\,.$
\medbreak

Let us now introduce several auxiliary results that we shall need to prove Theorem \ref{BWHIP}.
%\subsection{Preliminaries}
%$ $ \medbreak
We begin recalling the following comparison principle for the $p$-Laplacian.

\begin{lemma} \rm \cite[Lemma 3.1]{T_1983} \label{CPT} \it
Let $\omega \subset \RN$, $N \geq 2$, be a bounded domain and let $a \in L^{\infty}(\omega)$ be a non-negative function. 
Assume that $u\,, v \in W^{1,p}(\omega)$ satisfy (in a weak sense)
\begin{equation*}
\left\{
\begin{aligned}
\pLaplac u + a(x)|u|^{p-2} u & \leq \pLaplac v + a(x)|v|^{p-2}v\,, \quad & \textup{ in }\omega\,, \\
u & \leq v\,, \quad & \textup{ on } \partial \omega\,.
\end{aligned}
\right.
\end{equation*}
Then, it follows that $u \leq v$.
\end{lemma}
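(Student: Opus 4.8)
\textbf{Proof plan for Lemma \ref{CPT}.}

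The plan is to prove this comparison principle for the operator $u \mapsto \pLaplac u + a(x)|u|^{p-2}u$ by the standard device of testing the differential inequality with the positive part of the difference. Set $w := (u-v)^{+} \in W^{1,p}(\omega)$. The boundary condition $u \le v$ on $\partial\omega$ means precisely that $w \in \Wop$ (here one uses that $u,v \in W^{1,p}(\omega)$ and the trace of $(u-v)^{+}$ vanishes, so $(u-v)^{+}$ lies in the closure of $C_c^\infty(\omega)$; this is the technical reading of ``$u \le v$ on $\partial\omega$'' in the weak sense). Hence $w$ is an admissible, non-negative test function for the weak formulations of both differential inequalities.

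First I would subtract the two weak inequalities, using $\varphi = w$, to obtain
\[
\int_{\omega} \bigl( |\gradu|^{p-2}\gradu - |\gradv|^{p-2}\gradv \bigr)\cdot \nabla w \, dx
+ \int_{\omega} a(x)\bigl( |u|^{p-2}u - |v|^{p-2}v \bigr) w \, dx \le 0 .
\]
On the set $\{u > v\}$ we have $\nabla w = \gradu - \gradv$, and on $\{u \le v\}$ we have $w = 0$ and $\nabla w = 0$, so the first integral equals $\int_{\{u>v\}} \bigl( |\gradu|^{p-2}\gradu - |\gradv|^{p-2}\gradv \bigr)\cdot(\gradu-\gradv)\,dx$. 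The next step is to invoke the classical strict monotonicity inequality for the vector field $\xi \mapsto |\xi|^{p-2}\xi$ on $\RN$: for all $\xi,\eta \in \RN$, $\bigl(|\xi|^{p-2}\xi - |\eta|^{p-2}\eta\bigr)\cdot(\xi-\eta) \ge 0$, with equality only if $\xi = \eta$. Thus the first integral is non-negative. For the zero-order term, since $t \mapsto |t|^{p-2}t$ is nondecreasing on $\R$ and $a \ge 0$, on $\{u>v\}$ the integrand $a(x)(|u|^{p-2}u - |v|^{p-2}v)w$ is also non-negative. Therefore both integrals vanish; in particular $\int_{\{u>v\}}\bigl(|\gradu|^{p-2}\gradu - |\gradv|^{p-2}\gradv\bigr)\cdot(\gradu-\gradv)\,dx = 0$, and by the strict monotonicity this forces $\gradu = \gradv$ a.e.\ on $\{u>v\}$, hence $\nabla w = 0$ a.e.\ on $\omega$. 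Since $w \in \Wop$ and $\nabla w \equiv 0$, Poincar\'e's inequality gives $w = 0$, i.e.\ $u \le v$ a.e.\ in $\omega$.

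The main point requiring care — rather than a serious obstacle — is the justification that $w = (u-v)^{+}$ is a legitimate test function in $\Wop$ given only the hypothesis ``$u \le v$ on $\partial\omega$''; this is exactly the interpretation built into the definition of (upper/lower) solution, and for $W^{1,p}$ functions it follows from the characterization of $\Wop$ via vanishing trace together with the fact that $(u-v)^{+}$ has zero trace when $u-v$ does on the part where it is positive. Everything else is the textbook monotonicity argument for the $p$-Laplacian; indeed, since the statement is quoted verbatim from \cite[Lemma 3.1]{T_1983}, the cleanest route is simply to cite that reference, and the sketch above records the argument for completeness.
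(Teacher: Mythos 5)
Your argument is correct and is the standard monotonicity proof: the paper itself does not prove this lemma but simply quotes it from Tolksdorf \cite[Lemma 3.1]{T_1983}, whose proof is essentially the computation you sketch (test with $(u-v)^{+}\in\Wop$, use strict monotonicity of $\xi\mapsto|\xi|^{p-2}\xi$ and monotonicity of $t\mapsto|t|^{p-2}t$ together with $a\geq 0$, then Poincar\'e). The only small remark is that, since $\omega$ is merely a bounded domain with no boundary regularity, ``$u\leq v$ on $\partial\omega$'' should be read directly as the definition $(u-v)^{+}\in\Wop$ rather than through trace theory, which you essentially acknowledge.
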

\medbreak
As a second ingredient, we need the weak Harnack inequality.

\begin{theorem} \rm \cite[Theorem 3.13]{M_Z_1997}
\label{IWHIRefined} \it
Let $\omega \subset \RN$, $N \geq 2$, 
be a bounded domain and let $a \in L^{\infty}(\omega)$ be a non-negative function. Assume that $u \in W^{1,p}(\omega)$ 
is a non-negative upper solution of
\begin{equation*}
\pLaplac u + a(x)|u|^{p-2}u = 0\,, \quad u \in W_0^{1,p}(\omega)\,,
\end{equation*} 
and let $Q_{\rho}(x_0) \subset \omega$. Then, for any $\sigma,\tau \in (0,1)$ and $\gamma \in (0,r)$,
 there exists $C = C(p,\gamma,\sigma,\tau,\rho,\|a\|_{\infty}) > 0$ such that
\[ \inf_{Q_{\tau \rho}(x_0)} u \geq 
C \Big( \int_{Q_{\sigma \rho}(x_0)} u^{\gamma} \,dx \Big)^{1/\gamma}\,.\]
\end{theorem}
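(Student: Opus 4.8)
The final displayed statement is simply the interior weak Harnack inequality for non-negative upper solutions of the homogeneous $p$-Laplacian equation with a non-negative zero order coefficient, on a cube $Q_\rho(x_0)$ compactly contained in $\omega$. It is quoted verbatim from \cite[Theorem 3.13]{M_Z_1997}, so in the paper it is invoked as an external citation rather than proved. Nonetheless, here is how I would prove it if a self-contained argument were wanted.

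\textbf{Plan.} The plan is to run the classical Moser iteration / De Giorgi--Nash--Moser machinery for quasilinear equations, following the scheme in \cite{G_T_2001_S_Ed} (Chapter 8) adapted to the $p$-Laplacian as in Serrin and Trudinger, or equivalently the potential-theoretic approach of Mal\'y--Ziemer. First I would reduce to the model case $x_0=0$, $\rho=1$ by translation and the scaling $u_\lambda(x)=u(\lambda x)$, which preserves the structure of the equation with $a$ replaced by $\lambda^p a(\lambda x)$ (still bounded, with norm controlled by $\|a\|_\infty$); one keeps track of how the constant depends on $p,\gamma,\sigma,\tau,\rho,\|a\|_\infty$. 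Next, since $u\ge 0$ is an upper solution of $\pLaplac u + a(x)|u|^{p-2}u = 0$, for $k>0$ the shifted function $\bar u := u+k$ satisfies, for all non-negative $\varphi\in W_0^{1,p}$, an inequality of the form $\int |\nabla \bar u|^{p-2}\nabla\bar u\,\nabla\varphi \ge -\int a\, u^{p-1}\varphi \ge -\|a\|_\infty \int \bar u^{p-1}\varphi$, i.e. $\bar u$ is a positive upper solution of a $p$-Laplace type inequality with a zero order term that is, after division, of lower order relative to $\bar u$; the shift by $k$ (to be optimized or sent to $0$ at the end) makes negative powers of $\bar u$ admissible test functions.

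\textbf{Key steps.} (i) \emph{Test with negative powers.} For $\beta<0$, $\beta\ne -1$, insert $\varphi=\eta^p \bar u^{\beta}$ (with $\eta$ a cutoff) into the upper-solution inequality; using $\nabla\varphi = \beta\eta^p\bar u^{\beta-1}\nabla\bar u + p\eta^{p-1}\bar u^\beta\nabla\eta$, Young's inequality to absorb the $\nabla\eta$ term, and the lower-order bound on the zero order contribution (here one uses $\bar u\ge k$ and the boundedness of $a$), one obtains a Caccioppoli-type estimate $\int \eta^p \bar u^{\beta-1}|\nabla\bar u|^p \le C\,\frac{|\beta|+1}{|\beta|\,|\beta+1|^{\,?}}\int(|\nabla\eta|^p+\eta^p)\bar u^{\beta+p-1}$ — more cleanly phrased via $v:=\bar u^{(\beta+p-1)/p}$, which then lies in $W^{1,p}$ with $\int|\nabla(\eta v)|^p \le C(\beta,p)\int(|\nabla\eta|^p+\eta^p)v^p$. (ii) \emph{Sobolev + iteration on negative exponents.} Apply the Sobolev inequality to $\eta v$ to gain the exponent factor $\chi = r/p>1$ (with $r=r(N,p)$ as defined in the paper, interpreting $r=\infty$ and using $L^{p}\hookrightarrow$ BMO-type / any finite exponent when $p\ge N$), and iterate over a sequence of shrinking cubes and a geometric sequence of exponents $\gamma\chi^{-j}$ accumulating at $0$; this yields, for small $\gamma_0>0$,
\[
\inf_{Q_{\tau}} \bar u \;\ge\; C\Big(\int_{Q_{\sigma}}\bar u^{-\gamma_0}\,dx\Big)^{-1/\gamma_0}.
\]
(iii) \emph{The logarithmic estimate (the crux).} To bridge from negative to positive exponents one needs that $\log\bar u$ has bounded mean oscillation on $Q_\sigma$: testing with $\varphi=\eta^p\bar u^{1-p}$ (the borderline exponent $\beta=1-p$) gives $\int \eta^p |\nabla\log\bar u|^p \le C\int(|\nabla\eta|^p+\eta^p)$, hence $\log\bar u\in \mathrm{BMO}(Q_\sigma)$ with norm depending only on $p,\|a\|_\infty,\sigma$. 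By the John--Nirenberg inequality there is $\gamma_1>0$ and a constant $c$ with $\big(\fint e^{\gamma_1\log\bar u}\big)\big(\fint e^{-\gamma_1\log\bar u}\big)\le c$, i.e. $\big(\fint_{Q_\sigma}\bar u^{\gamma_1}\big)^{1/\gamma_1}\le c'\big(\fint_{Q_\sigma}\bar u^{-\gamma_1}\big)^{-1/\gamma_1}$. (iv) \emph{Iteration on small positive exponents and matching.} A further Moser iteration using test functions $\eta^p\bar u^{\beta}$ with $\beta\in(1-p,0)$ shows that all small positive $L^\gamma$-averages of $\bar u$ over slightly shrinking cubes are comparable, so one may replace $\gamma_1$ by the prescribed $\gamma\in(0,r)$ at the cost of adjusting $\sigma,\tau$ and the constant — for the full range $\gamma<r$ one chains the above with the standard reverse-H\"older/``$L^\gamma$ to $L^{\gamma'}$'' step for $p$-superharmonic-type functions. (v) \emph{Remove the shift.} Combining (ii)--(iv), $\inf_{Q_\tau}\bar u \ge C(\int_{Q_\sigma}\bar u^\gamma)^{1/\gamma}$; letting $k\to 0^+$ (monotone convergence on the right, and $\inf(u+k)=\inf u + k$ on the left) gives the claimed inequality for $u$ itself, with $C=C(p,\gamma,\sigma,\tau,\rho,\|a\|_\infty)$.

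\textbf{Main obstacle.} The hard part is step (iii) together with the bookkeeping in steps (i)--(ii): one must choose the exponents $\beta$ and the cutoffs so that the constants from the Caccioppoli inequality do not blow up as $\beta\to 0$ or as $\beta\to 1-p$ (the latter forces the separate logarithmic argument), and the zero order term $a(x)|u|^{p-2}u$ must be controlled uniformly — here the shift $\bar u=u+k$ together with $a\ge 0$ and $a\in L^\infty$ is exactly what makes $\int a u^{p-1}\eta^p\bar u^{\beta}$ harmless, since $a\ge 0$ makes this contribution have a favourable sign when $\beta\le 0$ in the upper-solution inequality (for the supersolution Harnack one only needs one-sided control). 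Getting the dependence of $C$ to be precisely on $(p,\gamma,\sigma,\tau,\rho,\|a\|_\infty)$, and on nothing else (in particular not on $u$), is the only delicate point, and it is handled by the scaling normalization in the first step. Since all of this is classical and already recorded in \cite{M_Z_1997, G_T_2001_S_Ed}, in the paper we simply cite it.
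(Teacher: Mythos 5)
The paper does not prove this statement at all: it is quoted directly from M\'aly--Ziemer \cite[Theorem 3.13]{M_Z_1997} (together with the remark on $L^r$ coefficients), so there is no internal proof to compare against. Your Moser-iteration sketch (Caccioppoli estimates with negative powers of the shifted solution, Sobolev embedding and iteration, the logarithmic BMO estimate with John--Nirenberg to cross zero, extension to the full range $\gamma<r$, then letting the shift tend to zero) is exactly the classical argument recorded in the cited sources, so your proposal is correct and follows essentially the same route as the proof the paper relies on.
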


In the next result,  we deduce a more precise information on the dependence of $C$ with respect to 
$\rho$. This is closely related to  \cite[Theorem 1.2]{T_1967} where however the constant still depends on 
$\rho$.

\begin{cor} 
 \label{IWHICubes} 
Let $a$ be a non-negative constant and $\gamma \in (0,r)$. There exists $C = C(p,\gamma,a) > 0$ such that, for all  $0 < \tilde{\rho} \leq 1$, any 
 $u \in W^{1,p}(Q_{\frac{3\tilde \rho}{2}}(e))$  non-negative upper solution of 
\begin{equation} \label{equ CorA4 u}
\pLaplac u + a |u|^{p-2}u = 0\,, \quad u \in W_{0}^{1,p}(Q_{\frac{3\tilde \rho}{2}}(e)),
\end{equation} 
satisfies
%Then, there exists $C = C(N,p,\gamma,a) > 0$ such that
\[ 
\inf_{Q_{\tilde \rho}(e)} u 
\geq 
C\, {\tilde \rho}^{\,-N/\gamma} \Big( \int_{Q_{\tilde \rho}(e)} u^{\gamma} \,dx \Big)^{1/\gamma}\,.
\]
\end{cor}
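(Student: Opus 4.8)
The plan is to derive Corollary \ref{IWHICubes} from Theorem \ref{IWHIRefined} by a rescaling argument, tracking carefully how the constant transforms under dilation. The main point is that the $p$-Laplacian transforms nicely under scaling: if $u$ solves \eqref{equ CorA4 u} on $Q_{\frac{3\tilde\rho}{2}}(e)$, set $v(y) := u(\tilde\rho y)$ for $y$ in the rescaled domain. Since $e$ here denotes $(0,\ldots,0,1/2)$ and the cube $Q_{\frac{3\tilde\rho}{2}}(e)$ has center $e$ and side $\frac{3\tilde\rho}{2}$, one checks that the map $y \mapsto \tilde\rho y$ sends $Q_{3/2}(e/\tilde\rho)$ onto $Q_{\frac{3\tilde\rho}{2}}(e)$; it is cleaner to normalize so that $v$ is defined on a fixed cube, so I would instead set $v(y) = u(\tilde\rho y)$ and note its domain is $Q_{3/2}(\tilde e)$ with $\tilde e$ depending on $\tilde\rho$ — but since Theorem \ref{IWHIRefined} is translation-covariant (the constant does not depend on $x_0$), the precise center is irrelevant. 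The key computation is $\Delta_p v(y) = \tilde\rho^{\,p}(\Delta_p u)(\tilde\rho y)$, so $v$ is a non-negative upper solution of $-\Delta_p v + \tilde\rho^{\,p} a\, |v|^{p-2}v = 0$ on $Q_{3/2}(\tilde e)$.

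Next I would apply Theorem \ref{IWHIRefined} to $v$ on the cube $Q_{3/2}(\tilde e) \subset \omega$ (taking $\omega = Q_{3/2}(\tilde e)$ itself, or a slightly larger domain), with the choices $\rho = 3/2$, $\sigma = \tau = 2/3$ so that $Q_{\sigma\rho}(\tilde e) = Q_{\tau\rho}(\tilde e) = Q_1(\tilde e)$, and the given $\gamma \in (0,r)$. This yields a constant $C_0 = C_0(p,\gamma, \tilde\rho^{\,p}a)$ with
\[
\inf_{Q_1(\tilde e)} v \geq C_0 \Big( \int_{Q_1(\tilde e)} v^{\gamma}\,dy \Big)^{1/\gamma}.
\]
Crucially, since $0 < \tilde\rho \leq 1$ and $a \geq 0$, the coefficient $\tilde\rho^{\,p}a$ ranges over $[0,a]$, a compact set; one should check (or invoke, from the proof in \cite{M_Z_1997}) that the constant in the weak Harnack inequality can be taken uniform as the zero-order coefficient varies over a bounded set — equivalently, $C_0$ can be replaced by $C(p,\gamma,a) := \inf_{0\le t\le a} C_0(p,\gamma,t) > 0$. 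Then I undo the scaling: $\inf_{Q_1(\tilde e)} v = \inf_{Q_{\tilde\rho}(e)} u$ (after matching centers), and $\int_{Q_1(\tilde e)} v^{\gamma}\,dy = \tilde\rho^{\,-N}\int_{Q_{\tilde\rho}(e)} u^{\gamma}\,dx$ by the change of variables $x = \tilde\rho y$. Substituting gives exactly
\[
\inf_{Q_{\tilde\rho}(e)} u \geq C(p,\gamma,a)\, \tilde\rho^{\,-N/\gamma} \Big( \int_{Q_{\tilde\rho}(e)} u^{\gamma}\,dx \Big)^{1/\gamma}.
\]

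The step I expect to be the main obstacle is establishing the uniformity of the constant $C_0(p,\gamma,t)$ as $t$ ranges over $[0,a]$; Theorem \ref{IWHIRefined} as quoted only asserts dependence on $\|a\|_{\infty}$ without monotonicity or continuity in that parameter. The cleanest resolution is to observe that a non-negative upper solution of $-\Delta_p v + t|v|^{p-2}v = 0$ with $0 \le t \le a$ and $v \ge 0$ is automatically a non-negative upper solution of $-\Delta_p v + a|v|^{p-2}v = 0$ (since adding $(a-t)|v|^{p-2}v \ge 0$ to the left side only increases it), so one may directly apply Theorem \ref{IWHIRefined} with coefficient bound $\|a\|_\infty = a$, obtaining a single constant $C(p,\gamma,a)$ valid for all admissible $\tilde\rho$ simultaneously. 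A secondary minor point is bookkeeping the cube centers under translation and confirming that the hypothesis $Q_\rho(x_0) \subset \omega$ in Theorem \ref{IWHIRefined} is met — here one simply takes the ambient domain to be the cube $Q_{\frac{3\tilde\rho}{2}}(e)$ appearing in the statement, rescaled, which contains $Q_{\tilde\rho}(e)$ with room to spare.
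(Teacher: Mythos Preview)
Your proposal is correct and follows essentially the same route as the paper: apply Theorem~\ref{IWHIRefined} on the fixed cube $Q_{3/2}$ with $\sigma=\tau=2/3$ to a rescaled function, then undo the change of variables. The only cosmetic difference is that the paper uses the dilation centered at $e$, namely $v(y)=u(\tilde\rho y',\tilde\rho(y_N-\tfrac12)+\tfrac12)$, so that $v$ lives directly on $Q_{3/2}(e)$ rather than on a translated cube; and the paper absorbs the rescaled coefficient $\tilde\rho^{\,p}a\le a$ into $a$ implicitly via the hypothesis $\tilde\rho\le 1$, exactly as in your ``cleanest resolution'' paragraph.
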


\begin{proof}
Let $C = C(p,a,\gamma) > 0$ be the constant given by Theorem \ref{IWHIRefined} applied with $ \rho=\frac{3}{2}$ and $\sigma=\tau=\frac{2}{3}$. This means that if $v \in W^{1,p}(Q_{\frac{3}{2}}(e))$ is a non-negative upper solution of
\begin{equation} \label{eq cub 32}
\pLaplac v + a |v|^{p-2}v = 0\,, \quad \quad v \in  W_0^{1,p}(Q_{\frac{3}{2}}(e)),
\end{equation} 
then 
\[ \inf_{Q_{1}(e)} v(y) \geq 
C \Big( \int_{Q_{1}(e)} v^{\gamma} \,dy \Big)^{1/\gamma}\,.
\]
As $0 < \tilde{\rho} \leq 1$, observe that if $u$ is a non-negative upper solution of \eqref{equ CorA4 u}, then $v$ defined by $v(y)=u(\tilde \rho y', \tilde \rho (y_N-\frac12)+\frac12)$, where $y=(y',y_N)$ with $y'\in \mathbb R^{N-1}$, is a non-negative upper solution of \eqref{eq cub 32}.
%satisfies 
%\begin{equation*}
%\pLaplac v + a |v|^{p-2}v \geq (-{\tilde \rho}^{\,p}+1)  a |v|^{p-2}v\geq 0 \quad \textup{ and }
% \quad v\geq 0, \quad \textup{ in } Q_{\frac{3}{2}}(e).
%\end{equation*} 
Thus, we can conclude that
\begin{equation*}
\inf_{Q_{\tilde \rho}(e)} u(x) = \inf_{Q_{1}(e)} v(y) \geq 
C \Big( \int_{Q_{1}(e)} v^{\gamma} \,dy \Big)^{1/\gamma}
=
C {\tilde \rho}^{\,-N/\gamma} \Big( \int_{Q_{\tilde \rho}(e)} u^{\gamma} \,dx \Big)^{1/\gamma}\,.
\end{equation*}
\end{proof}
\medbreak
Finally, we introduce a technical result of measure theory.

\begin{lemma} \rm \cite[Lemma 2.1]{I_S_2016} \it  \label{GISL}
Let $E \subset F \subset Q_1$ be two open sets. Assume  there exists $\alpha \in (0,1)$ such that:
\begin{itemize}
\item $|E| \leq (1-\alpha)|Q_1|$.
\item For any cube $Q \subset Q_1$, $|Q \cap E| \geq (1-\alpha)|Q|$ implies $Q \subset F$.
\end{itemize}
Then, it follows that $|E| \leq (1-c\alpha)|F|$ for some constant $c = c(N) \in (0,1)$.
\end{lemma}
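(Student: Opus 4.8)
The plan is to cover $F$ efficiently by cubes that sit inside $F$ but on which $E$ occupies at most the fraction $1-\alpha$, and then to select from them a disjoint subfamily by a Vitali covering argument. First I would dispose of the trivial case $F=Q_1$: then $|E|\le(1-\alpha)|Q_1|=(1-\alpha)|F|$ and the conclusion holds with $c=1$. So from now on one may assume $\emptyset\ne F\subsetneq Q_1$.

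The core step is to show that \emph{every} $x\in F$ lies in some cube $Q_x$ with $x \in Q_x\subseteq F$ and $|Q_x\setminus E|\ge\alpha|Q_x|$. Since $F$ is open one starts from a tiny cube containing $x$ inside $F$ and enlarges it (allowing translations) while keeping it inside $Q_1$; because $F\subsetneq Q_1$ is bounded, there is a first instant at which the enlarged cube $Q^{\ast}$ ceases to be contained in $F$. One checks that $Q^{\ast}$ can be taken inside $Q_1$: an open cube whose closure meets $\partial Q_1$ is still contained in $Q_1$, and the only cube of side length $1$ inside $Q_1$ is $Q_1$ itself, which is excluded, so there is always room to enlarge within $Q_1$. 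Since $Q^{\ast}\subseteq Q_1$ and $Q^{\ast}\not\subseteq F$, the contrapositive of the second hypothesis forces $|E\cap Q^{\ast}|<(1-\alpha)|Q^{\ast}|$; by continuity of $Q\mapsto(|E\cap Q|,|Q|)$ along the enlargement, a cube $Q_x\ni x$ slightly smaller than $Q^{\ast}$ is still contained in $F$ and still satisfies $|E\cap Q_x|<(1-\alpha)|Q_x|$, i.e. $|Q_x\setminus E|>\alpha|Q_x|$. This ``stopping cube'' construction, and in particular the bookkeeping that ensures the enlarged cube remains inside $Q_1$ so that the hypothesis may legitimately be applied to it, is the step I expect to require the most care.

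The family $\{Q_x\}_{x\in F}$ then covers $F$, with side lengths bounded by $1$, so the Vitali $5r$-covering lemma yields a countable pairwise disjoint subfamily $\{Q_{x_k}\}_k$ with $F\subseteq\bigcup_k 5Q_{x_k}$, where $5Q$ denotes the cube concentric with $Q$ of five times the side length; hence $|F|\le 5^{N}\sum_k|Q_{x_k}|$. Finally, since the $Q_{x_k}$ are pairwise disjoint, contained in $F$, and $E\subseteq F$, I would estimate $|F|-|E|=|F\setminus E|\ge\sum_k|Q_{x_k}\setminus E|\ge\alpha\sum_k|Q_{x_k}|\ge 5^{-N}\alpha\,|F|$, which gives $|E|\le(1-5^{-N}\alpha)|F|$. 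Thus the lemma holds with $c=c(N):=5^{-N}\in(0,1)$. (A Besicovitch-type covering would improve the dimensional constant, but any $c(N)\in(0,1)$ suffices.)
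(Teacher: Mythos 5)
The paper does not actually prove this lemma: it is quoted from \cite[Lemma 2.1]{I_S_2016} without proof, so there is no in-paper argument to compare against, and your proposal should be judged on its own. It is correct, and it is essentially the standard ``growing ink spots'' argument of the cited source: for each $x\in F$ produce a stopping cube $Q_x\ni x$ with $Q_x\subset F$ and $|Q_x\setminus E|\ge\alpha|Q_x|$ by growing cubes inside $Q_1$ until containment in $F$ fails and invoking the contrapositive of the second hypothesis, then pass to a disjoint Vitali subfamily and sum; the first hypothesis is needed only to dispose of the case $F=Q_1$, which you handle. The one point to tighten is exactly the step you flag: with open cubes the set of parameters $t$ for which $Q(t)\subset F$ need not have a ``first'' exit time at which containment fails, since the cube at the critical parameter may still lie in $F$ (e.g.\ when $F$ coincides with that cube), so the strict inequality $|E\cap Q^{\ast}|<(1-\alpha)|Q^{\ast}|$ you use is not automatic. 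The repair is routine: either grow closed cubes (a closed cube inside the open $Q_1$ is still an admissible cube, the failure set in $t$ is then closed, and $Q^{\ast}\not\subset F$ as you assert), or apply the hypothesis to $Q(t)$ for $t$ slightly above the supremum of good parameters and pass to the limit, getting the non-strict bound $|E\cap Q(t^{\ast})|\le(1-\alpha)|Q(t^{\ast})|$, which is all your final estimate requires. With that adjustment, the chain $|F\setminus E|\ge\sum_k|Q_{x_k}\setminus E|\ge\alpha\sum_k|Q_{x_k}|\ge 5^{-N}\alpha|F|$ is valid and yields the lemma with $c=5^{-N}$, a legitimate (if not optimal) dimensional constant.
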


Now, we can perform the proof of the main result. We prove the boundary weak Harnack inequality for cubes and as consequence we obtain the desired result.

%\subsection{The proof}

\begin{lemma}[\textbf{Growth lemma}] \label{GL}
Let $a$
% \in L^{\infty}(Q_4)$ 
be a non-negative constant. Given $\nu > 0$, 
there exists $k = k(p,\nu, a) > 0$ such that, if $u \in W^{1,p}(Q_{\frac32})$ is a non-negative upper solution of 
%non-negative weak solution of 
\begin{equation*}
\pLaplac u + a\,|u|^{p-2}u = 0\,, \quad u \in W_{0}^{1,p}(Q_{\frac32} )\,,
\end{equation*}
and the following inequality holds
\begin{equation} \label{hypGL}
|\{x\in  Q_1 : u(x) > x_N \}| \geq \nu.
\end{equation}
Then $u(x) > k x_N$ in $Q_1$.
\end{lemma}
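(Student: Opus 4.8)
The plan is to compare $u$ from below with a solution of the $p$-Laplacian equation on a fixed cube and to propagate positivity via the interior weak Harnack inequality (Corollary \ref{IWHICubes}), exactly along the lines of Sirakov's growth lemma. First I would fix a smooth test function / barrier: let $\phi$ be the (unique, by Lemma \ref{CPT}) non-negative solution of
\[
\pLaplac \phi + a\,|\phi|^{p-2}\phi = 0 \ \textup{ in } Q_{1},\qquad \phi = 0 \ \textup{ on } \partial Q_{1} \setminus \{x_N = 0\},\qquad \phi = \psi \ \textup{ on } \{x_N = 0\}\cap \partial Q_1,
\]
for a suitable fixed non-negative $\psi$ not identically zero supported near the center of the bottom face (alternatively, simply take $\phi$ to be $p$-harmonic with zero data except on the bottom face). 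The point is that $\phi$ is a fixed function depending only on $p$ and $a$, it is positive in $Q_1$, and on the bottom face it does not vanish, so $\phi(x) \geq c\, x_N$ near that face; more importantly $\phi \leq C x_N$ everywhere by the boundary regularity, and $\phi$ is an admissible comparison function. Actually the cleaner route is: take $\phi$ $p$-harmonic-type in a cube slightly inside with boundary value $1$ on one face and $0$ elsewhere, rescale, and use that $\inf_{Q_{1/2}}$-type quantities of $\phi$ are bounded below by a fixed constant.

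The main steps, in order, would be: (1) Use hypothesis \eqref{hypGL} together with the interior weak Harnack inequality on a chain of cubes to deduce that $u$ is bounded below by a fixed constant $\delta = \delta(p,\nu,a) > 0$ on some fixed subcube $Q' \subset Q_1$ sitting at a fixed positive distance from $\{x_N = 0\}$; this uses the measure lower bound $\nu$ to locate a cube where $\{u > x_N\}$ has large density, hence where (by Corollary \ref{IWHICubes}, whose conclusion involves an $L^\gamma$ average) $u$ is pointwise large, and then Theorem \ref{IWHIRefined} to spread this lower bound. (2) Build the barrier $v$ solving $\pLaplac v + a|v|^{p-2}v = 0$ in $Q_1$ with $v = 0$ on $\partial Q_1$ except $v = \delta$ on $\partial Q'$-related data, or more simply take $v = \delta\, w$ where $w$ solves the fixed problem with boundary datum $1$ on the appropriate part; by \cite[Theorem 3]{B_N_1993}-type boundary Hopf estimates (as already invoked in Lemma \ref{bcLemma1}) one has $v(x) \geq c_0\, \delta\, d(x,\partial Q_1) \geq c_0\,\delta\, x_N$ in $Q_1$ near the bottom face, with $c_0$ depending only on the fixed geometry, $p$, $a$. (3) Apply the comparison principle Lemma \ref{CPT} on $Q_1$: $u$ is an upper solution, $v$ is a solution, and $u \geq v$ on $\partial Q_1$ (on the bottom face $v$ is controlled and $u \geq 0$ suffices once $v$ is built with zero datum there except a lower-order piece — one must be a little careful and instead build $v$ vanishing on the whole boundary of a slightly smaller cube so that $u \geq v = 0$ on that boundary is automatic), concluding $u \geq v \geq k\, x_N$ in $Q_1$ with $k = c_0 \delta$.

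The delicate point — and the one I expect to be the main obstacle — is step (1): extracting from the \emph{measure} condition \eqref{hypGL} a \emph{pointwise} lower bound for $u$ on a fixed interior cube. One cannot directly say $u$ is large somewhere, because $\{u > x_N\}$ large in measure only gives $u > x_N$ on a big set, and near $\{x_N = 0\}$ this is a weak bound. The fix is to observe that $|\{x \in Q_1 : u(x) > x_N\}| \geq \nu$ forces, by Fubini / a pigeonhole in the $x_N$-variable, the existence of a level $x_N \approx t_0$ with $t_0 = t_0(\nu,N) > 0$ fixed such that the slice where $u > t_0$ has large $(N-1)$-dimensional measure, hence $\{u > t_0\}$ contains a fixed-size cube $Q' \Subset Q_1$ with $|Q' \cap \{u > t_0\}| \geq (1-\alpha)|Q'|$ for a controlled $\alpha$; then Corollary \ref{IWHICubes} applied on $Q'$ (noting $\big(\fint_{Q'} u^\gamma\big)^{1/\gamma} \geq t_0 (1-\alpha)^{1/\gamma}|Q'|^{... }$ up to the normalization in the Corollary) yields $\inf_{Q'} u \geq \delta(p,\nu,a) > 0$. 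One then possibly needs Lemma \ref{GISL} to upgrade density, and a short chaining argument to move the cube $Q'$ to a canonical location. The rest — barrier construction and comparison — is routine given the tools already assembled in Section \ref{II}.
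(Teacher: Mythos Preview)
Your two-stage strategy --- (i) convert the measure hypothesis \eqref{hypGL} into a pointwise interior lower bound via the weak Harnack inequality, then (ii) push that bound down to the face $\{x_N=0\}$ with a barrier and Lemma~\ref{CPT} --- is exactly the architecture of the paper's proof. For (i), the paper's execution is simpler than your Fubini/pigeonhole: it just shrinks $Q_{3/2}$ to $Q_{3/2-\delta}(\tfrac{3}{2}e)$ so that $x_N\ge\delta/2$ there; then $\{u>x_N\}$ (still of measure $\ge\nu/2$ after discarding the thin shell) gives $\int u^\epsilon \ge (\delta/2)^\epsilon\,\nu/2$, and Theorem~\ref{IWHIRefined} yields $\inf u/x_N \ge k_\delta>0$ on the shrunk cube. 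No pigeonhole, no chaining, and Lemma~\ref{GISL} is not used here (it enters only in the next lemma). For (ii), the paper does \emph{not} solve a boundary value problem and invoke a Hopf estimate; instead it writes down an explicit subsolution $v_\delta(x)=\frac{1}{\delta}(x_N-\eta(x'))^2+(x_N-\eta(x'))$ on a thin strip $\omega_\delta$ near the bottom (with $\eta$ a smooth tangential cutoff), checks by direct computation that $-\Delta_p v_\delta + a|v_\delta|^{p-2}v_\delta\le 0$ for $\delta$ small, matches boundary values with $2u/k_\delta$, and applies Lemma~\ref{CPT}. This gives $u\ge \tfrac{1}{2}k_{\delta}x_N$ on the strip, which together with Step~3 covers all of $Q_1$.

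Your barrier step has a small but real gap: you invoke ``\cite[Theorem~3]{B_N_1993}-type boundary Hopf estimates (as already invoked in Lemma~\ref{bcLemma1})'', but Lemma~\ref{bcLemma1} and that Brezis--Nirenberg result are for the \emph{linear} Laplacian, whereas the present lemma is for the $p$-Laplacian with $1<p<\infty$. A $p$-Laplacian Hopf lemma would close your argument, but it is not among the tools assembled in Section~\ref{II}; the paper's explicit polynomial barrier sidesteps the issue entirely and keeps the proof self-contained.
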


% Versión vieja Growth Lemma for cubes
\begin{remark}
Before we prove the Lemma, observe that there is no loss of generality in considering $a$ a non-negative constant instead of $a\in L^{\infty}(Q_{\frac32})$ non-negative. If $u\geq 0$ satisfies
\begin{equation*}
\pLaplac u + a(x)|u|^{p-2}u \geq 0\,, \quad \textup{ in } Q_{\frac32},
\end{equation*}
then $u$ satisfies also
\begin{equation*}
\pLaplac u + \|a\|_{\infty}\,|u|^{p-2}u \geq 0\,, \quad \textup{ in } Q_{\frac32}.
\end{equation*}
\end{remark}

\begin{proof}
%Let us fix a subdomain $\Omega$ with boundary $\partial \Omega$ of class $\mathcal{C}^{1,1}$ such that
%\[ Q_3 \subset \Omega \subset Q_4 \quad \textup{ and }
 %\quad d(\partial \Omega, \partial Q_4 \setminus Q_4^0) > 0\,.\]
%%For instance, for $N = 2$, we can think of the following picture.
%%\begin{center}
%%\begin{figure}[h]
%%\includegraphics[scale=0.6]{Fig1}
%%\caption{Illustration of the construction of the proof of Lemma \ref{GL}}
%%\end{figure}
%%\end{center}
%Moreover, let us denote $d(x) = d(x,\partial \Omega)$ and define
Let us define
$\Sd =   Q_{\frac32} \setminus Q_{\frac32-\delta}\big(\frac32 e\big)$
and  fix $c_1 = c_1(\nu) \in (0,\frac12)$ small enough in order to ensure that 
$|\Sd| \leq \frac{\nu}{2}$ for any $0 < \delta \leq c_1$. 
\medbreak

\noindent \textbf{Step 1:} \textit{For all $\delta\in (0, c_1]$, it follows that
$ |\{x \in Q_{\frac32-\delta}\big(\frac32 e\big): u(x) > x_N \} | \geq \frac{\nu}{2}$.}
\medbreak

Directly observe that
\[
\{x\in  Q_1 : u(x) > x_N \}\subset 
\{x\in  Q_{\frac32}: u(x) > x_N \}
\subset 
\{x\in  Q_{\frac32-\delta}\big(\frac{3}{2}e\big) : u(x) > x_N \}
\cup \Sd.
\]
Hence, Step 1 follows from \eqref{hypGL} and the choice of $c_1$.
%we have
%|\{x\in  Q_1 : u(x) > x_N \}| \geq \nu.
%As $\Omega=\Od \cup \Sd$, we have
%%First of all, observe that $\{ x \in \Omega: u(x) \geq d(x)\} \subset \{ x \in \Sd: u(x) \geq d(x)\} \cup \Od$. 
%%Hence, it follows that
%\[ |\{ x \in \Omega: u(x) \geq d(x) \} | \leq |\{ x \in \Sd: u(x) \geq d(x)\}|+ |\Od|\,.\]
%Then the result follows from Step 1 and the assumption  $|\Od| \leq \frac{\nu}{2}$.
%%, we deduce that
%%\[ |\{ x \in \Sd: u \geq d\}| \geq |\{ x \in \Omega: u \geq d \} | - |\Od| \geq \frac{\nu}{2} \,.\]
\medbreak

\noindent \textbf{Step 2:} \textit{For any $\epsilon > 0$ and  all $\delta\in (0, c_1]$,  
the following inequality holds}
\begin{equation} \label{s3GL}
\Big( \int_{Q_{\frac32-\delta}\big(\frac32 e\big)} u^{\epsilon} \, dx \Big)^{1/\epsilon} 
\geq \frac{\delta}{2} \big( \frac{\nu}{2} \big)^{1/\epsilon}.
\end{equation}
\medbreak

Since $u \geq 0$ and, for any $x \in Q_{\frac32-\delta}\big(\frac32 e\big)$ we have $x_N \geq \frac{\delta}{2}$, it follows that
\[
\begin{aligned}
\int_{Q_{\frac{3}{2}-\delta}(\frac32 e)} u^{\epsilon} \,dx 
&\geq
 \int_{ \{ x \in Q_{\frac32-\delta}(\frac32 e):\ u(x) \geq x_N\}} u^{\epsilon} \,dx
 \\
& \geq \int_{ \{ x \in Q_{\frac32-\delta}(\frac32 e): u(x) \geq x_N\}} 
\Big(\frac{\delta}{2}\Big)^{\epsilon} \,dx 
=
\Big(\frac{\delta}{2}\Big)^{\epsilon}\,
\Big|\Big\{ x \in Q_{\frac32-\delta}\big(\frac{3}{2}e\big): u(x) \geq x_N\Big\}\Big|.
\end{aligned}
\]
Step 2 follows then from Step 1.
%Considering together the above inequalities and applying Step 2, we conclude that
%\[ \int_{\Sd} u^{\epsilon} \,dx \geq \delta^{\epsilon} \frac{\nu}{2}\,,\]
%and so, that \eqref{s3GL} holds.
\medbreak

\noindent \textbf{Step 3:} \textit{For any $\epsilon \in (0,r)$ and  all $\delta\in (0, c_1]$,  there exists 
$C_{\delta} = C_{\delta}(p,\epsilon, \delta, a) > 0$ such that}
\[ \inf_{Q_{\frac32-\delta}\big(\frac32 e\big)} \frac{u(x)}{x_N}
 \geq \frac{ \delta \,C_{\delta}}{3} \Big( \frac{\nu}{2} \Big)^{1/\epsilon}.
 \]
\medbreak
By Theorem  \ref{IWHIRefined} applied with $\rho = \frac32$, $x_0 = \frac32 e$  and 
$\tau = \sigma = 1-\frac{2}{3}\delta$, 
there exists a constant $C_{\delta} = C_{\delta}(p,\epsilon,\delta,a) > 0$ such that
\[ \inf_{Q_{\frac32-\delta}\big(\frac32 e\big)} u (x)
\geq C_{\delta} \Big( \int_{Q_{\frac32-\delta}\big(\frac32 e\big)}  u^{\epsilon} \,dx \Big)^{1/\epsilon}\,.\]
%Hence, since $\Sd \subset Q_{4-\delta}(4e)$, it follows that
%\[ \inf_{\Sd} u \geq C_{\delta} \Big( \int_{\Sd} u^{\epsilon} \,dx \Big)^{1/\epsilon}.\]
Since for all $x\in Q_{\frac32-\delta}\big(\frac32 e\big)$ we have  $x_N\leq \frac32$, 
%\[ \frac{u(x)}{d(x,\partial \Omega)} \geq \frac{2 u(x)}{\diam(\Omega)}\,, \qquad \forall\ x \in \Omega\,.\]
%Thus, 
Step 3 follows from the above inequality and Step 2.
%we obtain that
%\[ \inf_{\Sd} \frac{u(x)}{\dist(x,\partial \Omega)} 
%\geq 
%\frac{2C}{\diam(\Omega)} \left( \int_{\Sd} u^{\epsilon} \,dx \right)^{1/\epsilon} 
%\geq 
%\frac{2C}{\diam(\Omega)} \delta \left( \frac{\nu}{2} \right)^{1/\epsilon}\,.\] 
\medbreak

\noindent \textbf{Step 4:} \textit{Conclusion.}
%There exists $k = k(N,p,\nu, a) > 0$ such that
 %$u(x) > k x_N$ in $Q_1$.} %{\color{red}$k$ depend aussi de $\epsilon$ non ?}
\medbreak

We fix $\epsilon \in (0,r)$, define 
$k_{\delta} = \frac{\delta\,C_{\delta}}{3} \big( \frac{\nu}{2} \big)^{1/\epsilon}$
%If we show the existence of  $k = k(N,p,\nu, a) > 0$ such that 
%$\displaystyle\lim_{\delta \rightarrow 0} k_{\delta} \geq 2k\,,$ applying Step 3 we can conclude. 
and introduce $\eta:[-\frac{3-2c_1}{4},\frac{3-2c_1}{4}]^{N-1} \to \R$ a $\mathcal{C}^{\infty}$ function satisfying
% $\eta\in \mathcal{C}^{\infty}([-\frac{3-2c_1}{4},\frac{3-2c_1}{4}]^{N-1},\mathbb R)$
\[ \eta(x_1, \ldots, x_{N-1}) = \left\{ \begin{aligned}
& 0, \quad &\textup{if } (x_1, \ldots, x_{N-1}) \in \big[-\tfrac{1}{2},\tfrac{1}{2}\,\big]^{N-1}, \\
& \tfrac{c_1}{2}, & \textup{if } (x_1, \ldots, x_{N-1}) \in 
\partial_{\mathbb R^{N-1}} \big(\big[-\tfrac{3-2c_1}{4},\tfrac{3-2c_1}{4}\big]^{N-1}\big),
\end{aligned}
\right.
\]
and 
\[
0\leq \eta(x_1, \ldots, x_{N-1}) \leq \frac{c_1}{2}\qquad  \textup{ for } (x_1, \ldots, x_{N-1}) \in 
\big[-\tfrac{3-2c_1}{4},\tfrac{3-2c_1}{4}\big]^{N-1}.
\]
Moreover, we consider the auxiliary function 
\[ 
\vd (x_1, \ldots, x_N)= \frac{1}{\delta} \big(x_N-\eta(x_1,\ldots,x_{N-1})\big)^2 + \big(x_N-\eta(x_1,\ldots,x_{N-1})\big)
\]
defined in
\[ 
\begin{aligned}
\omega_{\delta}&=\Big\{ 
 (x_1, \ldots, x_N) \in 
\big[-\tfrac{3-2c_1}{4},\tfrac{3-2c_1}{4}\big]^{N-1}
\times \big[0,  \tfrac{c_1}{2}\big] :
% \\
% &\hspace{70mm}\eta(x_1, \ldots, x_{N-1}) \leq \frac{\delta}{2} 
 % \textup{ and }
\eta(x_1, \ldots, x_{N-1}) \leq x_N\leq \frac{\delta}{2}\Big\}.
\end{aligned}
\]
Observe that, in $\Od$, we have  $0\leq x_N-\eta(x_1,\ldots,x_{N-1})\leq \frac{\delta}{2}$. Hence, there exists $c_2 = c_2(p,\nu,a) \in (0,c_1]$ such that, for all $0 < \delta \leq c_2$,
\[ 
\pLaplac \vd + a |v_{\delta}|^{p-2}v_{\delta} \leq 
-\frac{2}{\delta}(p-1) + 2^{p-1} \Big|\sum_{i=1}^{N-1}\frac{\partial}{\partial x_i}\Big[
\Big(\sum_{i=1}^{N-1}(\frac{\partial\eta}{\partial x_i})^2+1\Big)^{\frac{p-2}{2}}
\frac{\partial\eta}{\partial x_i}\Big] \Big|
+ \frac{3a}{4} \delta
\leq
0\,, \quad \textup{ in }\Od.
\]
%Following \cite[Section 14.6]{G_T_2001_S_Ed} (see also \cite[Page 130]{Q_S_2010} ), we obtain that
%\begin{equation*}
%\begin{aligned}
%\pLaplac \vd  
%%= \pLaplac \left(\frac{1}{\delta} d^2 + d\right) 
%%= -\div \left( \Bigl| \frac{1}{\delta} \nabla(d^2) + \nabla d \Bigr|^{p-2} 
%%\Bigl( \frac{1}{\delta} \nabla( d^2 ) + \nabla d \Bigr) \right) \\
%%& = -\div \left( \Bigl| \frac{2d \nabla d}{\delta} + \nabla d \Bigr|^{p-2} \Bigl( \frac{2d \nabla d}{\delta} 
%%+ \nabla d \Bigr)\right) \\
%& = -\div \Big( \bigl( \frac{2d}{\delta}+1 \bigr)^{p-1} |\nabla d |^{p-2} \nabla d \Big) 
%= -\div \Big( \bigl( \frac{2d}{\delta}+1 \bigr)^{p-1} \nabla d \Big) \\
%& = -(p-1) \frac{2}{\delta} \bigl( \frac{2d}{\delta}+1 \bigr)^{p-2} 
%+ \bigl( \frac{2d}{\delta}+1 \bigr)^{p-1} (- \Delta d) \\
%& = -(p-1) \frac{2}{\delta} \bigl( \frac{2d}{\delta}+1 \bigr)^{p-2} + \bigl( \frac{2d}{\delta}+1 \bigr)^{p-1} 
%\Big( \sum_{i=1}^{N-1} \dfrac{\kappa_i}{1-\kappa_i d} \Big), 
%%\\
%\end{aligned}
%\end{equation*}
%where $\kappa_i$ denote the main curvatures of $\omega$. Observe that, $ 0 \leq d/\delta < 1$ in $ \Od$.
%Hence, since the boundary of $\Omega$ is smooth, there exists $c_2 = c_2(\Omega,p) \in (0,c_1)$ such that, 
%for any $\delta \in (0,c_2)$, 
%\[ \pLaplac \vd + a(x) |v_{\delta}|^{p-2}v_{\delta} \leq 0\,, \quad \textup{ in } \Od\,.\]
On the other hand, we define $u_{\delta} = \frac{2u}{k_{\delta}}$ and immediately observe that
\[ \pLaplac u_{\delta} + a |u_{\delta}|^{p-2}u_{\delta} \geq 0\,, \quad \textup{ in } \Od\,.\]
Now, since by Step 3, we have 
\[
u_{\delta}\geq \frac{2k_{\delta}}{k_{\delta}}\frac{\delta}{2}=\delta\geq v_{\delta},
\quad \textup{ for  } x_N=\frac{\delta}{2},
\]
it follows that
\[
u_{\delta}\geq v_{\delta}
\quad \textup{ on  }\partial \Od.
\]
%\[ u_{\delta} \geq v_{\delta} = 0\, \quad \textup{ on } \partial \Omega\,,\]
%and, by Step 4, 
%\[ u_{\delta} = \frac{2u}{k_{\delta}} \geq \frac{2k_{\delta} d}{k_{\delta}} = 2d = 2\delta = v_{\delta}\,, 
%\quad \textup{ on }  \partial \Od \setminus \partial \Omega\,,\]
%we have that
%\[ u_{\delta} \geq v_{\delta} \quad \textup{on} \quad \partial \Od\,.\]
%%In short, we have
%%\begin{equation*}
%%\left \{
%%\begin{aligned}
%%\pLaplac v_{\delta} + a(x)|v_{\delta}|^{p-2}v_{\delta}&
 %%\leq \pLaplac u_{\delta} + a(x)|u_{\delta}|^{p-2}u_{\delta}\,, \quad &\textup{ in } \Od\,,\\
%%v_{\delta} &\leq u_{\delta}\,, \quad &\textup{ on } \partial \Od\,.
%%\end{aligned}
%%\right.
%%\end{equation*}
Then, applying Lemma \ref{CPT}, it follows that, for any $\delta \in (0,c_2]$,  
$v_{\delta} \leq u_{\delta}$ in $\Od$. For $\delta = c_2/2$, we obtain in particular
\begin{equation*}
 u(x)  \geq \frac{1}{2}k_{\frac{c_2}{2}} v_{\frac{c_2}{2}} (x)
 = \frac{1}{2}k_{\frac{c_2}{2}}\bigl( \frac{2}{c_2} x_N^2 + x_N \bigr) 
 \geq \frac{1}{2}k_{\frac{c_2}{2}} x_N \,, \quad \textup{ in } \omega_{\frac{c_2}{2}}\cap Q_1\,.
\end{equation*}
The result then follows from the above inequality and Step 3.
%As Step 4 implies that
%\[ u \geq k_{\frac{c_2}{2}} d\,, \quad \textup{ in } S_{\frac{c_2}{2}}\,,\]
%the result follows.
%%Hence, we can conclude that there exists $k = k(N,p,\nu,\Omega,\|a\|_{\infty})> 0$ such that 
%%$u > kd$ in $\Omega\,,$ as desired.
%\medbreak
%
%\noindent \textbf{Step 6:} \textit{Conclusion.}
%\medbreak
%
%First of all, observe that $Q_1 \subset \Omega$ and,  for all $x \in Q_1$,  $d(x) = x_N$. 
%Hence, Step 5 implies that $u(x) > kx_N$ in $Q_1$ where $k = k(N,p,\nu,\Omega, \|a\|_{\infty}) > 0$. 
%Since $\Omega$ is chosen and fixed, we can conclude that $k$ depends only in 
%$N$, $p$, $\nu$ and $\|a\|_{\infty}$.
%% {\color{green}Pourquoi ce passage par $\Omega$? Pourquoi ce jeu avec $Q_1$, $Q_3$ et $Q_4$? 
%%Que pour avoir de l'info sur $Q_1$ on a besoin de l'equation sur $Q_R$ avec $R>1$, c'est clair mais 
%%pourquoi ce rapport? Je peux l'accepter mais ce passage par $\Omega$ est lourd.}
\end{proof}

\begin{lemma} \label{iterationLemma}
Let $a \in L^{\infty}(Q_4)$ be a non-negative function. 
Assume that $u \in W^{1,p}(Q_4)$ is a non-negative upper solution of
\begin{equation} \label{IL11}
\pLaplac u + a(x)|u|^{p-2}u = 0\,, \quad u \in W_0^{1,p}(Q_{4})\,,
\end{equation}
satisfying
\[ \inf_{Q_1} \frac{u(x)}{x_N} \leq 1\,.\]
Then, there exist $M = M(p,\|a\|_{\infty}) > 1$ and $\mu  \in (0,1)$ such that
\begin{equation} \label{IL1}
\big| \{x \in Q_1: u(x)/x_N > M^j \} \big| < (1-\mu)^j\,, \quad \forall\ j \in \N\,.
\end{equation}
\end{lemma}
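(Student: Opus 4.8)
The plan is to set up an iteration where the \textbf{Growth Lemma} (Lemma \ref{GL}) is applied at dyadic scales to the rescaled function $u(x)/x_N$, in the spirit of the classical proof of the weak Harnack inequality via the ``rising sun'' or Krylov-Safonov measure-theoretic iteration. The hypothesis $\inf_{Q_1} u(x)/x_N \le 1$ means the set where $u/x_N$ is large cannot be too big, and the goal is to quantify this decay exponentially in $j$. First I would fix the constant $\nu = \nu(N) \in (0,1)$ to be used in Lemma \ref{GL} (say $\nu = |Q_1|/2$ or a similar fraction of $|Q_1|$), apply that lemma with this $\nu$ and with $a$ replaced by $\|a\|_\infty$ (legitimate by the Remark following Lemma \ref{GL}), and obtain the constant $k = k(p,\nu,\|a\|_\infty) \in (0,1)$. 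I would then set $M := 1/k > 1$ and $\mu := 1 - \nu/|Q_1| \in (0,1)$, so that $(1-\mu)|Q_1| = \nu$.

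The iteration proceeds by (strong) induction on $j$. For $j=1$: if $|\{x \in Q_1 : u(x)/x_N > M\}| \ge \nu$, then since $M = 1/k$ we would have $|\{u(x) > x_N/k\}| \ge \nu$, hence the rescaled function $\tilde u := u/k$ (which is still a non-negative upper solution of the same $p$-Laplacian equation on $Q_{3/2}$, using $p$-homogeneity of $\Delta_p$ together with $k<1$ to absorb the zero-order term with the same or a smaller constant) satisfies $|\{\tilde u > x_N\}| \ge \nu$, so Lemma \ref{GL} gives $\tilde u(x) > k x_N$ in $Q_1$, i.e. $u(x)/x_N > k^2 \cdot (1/k) \cdot \ldots$ — more precisely $u(x) > k^2 x_N$, hence $u(x)/x_N > k^2$, contradicting $\inf_{Q_1} u/x_N \le 1$ once we note $k < 1$. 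Wait — this needs care: the rescaling should be chosen so that the contradiction is with $\inf_{Q_1} u/x_N \le 1$ directly. The clean version: if the bad set at level $M$ had measure $\ge \nu$, apply Lemma \ref{GL} to $u/k = Mu$... Let me instead argue: suppose $|\{u/x_N > M^j\}| \ge (1-\mu)^j = \nu (1-\mu)^{j-1}$ for contradiction at stage $j$, but the natural inductive step localizes to subcubes.

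The correct structure uses Lemma \ref{GISL} (the measure-theoretic covering lemma of Imbert-Silvestre). At step $j$, set $E_j := \{x \in Q_1 : u(x)/x_N > M^j\}$. By induction $|E_{j-1}| \le (1-\mu)^{j-1}$, and clearly $E_j \subset E_{j-1}$. One shows that for any cube $Q \subset Q_1$ with $|Q \cap E_j| \ge (1-\mu)|Q| = \nu |Q|/|Q_1|$, one has $Q \subset E_{j-1}$: rescale $Q$ to a unit cube, observe that on the rescaled picture the function $v := u/(M^{j-1} x_N)$ restricted appropriately is (after translating so the relevant face sits at $\{x_N = 0\}$, which is possible since $Q_1$ touches that hyperplane and one works in a half-neighborhood) a non-negative upper solution with $|\{v > x_N\} \cap (\text{unit cube})| \ge \nu$, so Lemma \ref{GL} forces $v > k x_N = x_N/M$ throughout, i.e. $u/x_N > M^{j-1}/M \cdot M = M^{j-1}$ — hmm, this gives $Q \subset E_{j-1}$ only if the inequality $v > x_N/M$ translates to $u/x_N > M^{j-1}$, which it does since $v = u/(M^{j-1}x_N)$ gives $u/x_N = M^{j-1} v > M^{j-1} x_N / M$; one then needs $x_N$ bounded below on $Q$, handled by scale invariance of the setup relative to the distance to $\partial\omega$. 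Then Lemma \ref{GISL} yields $|E_j| \le (1-c\nu')|E_{j-1}| \le (1-\mu)|E_{j-1}| \le (1-\mu)^j$ after adjusting $\mu$ so that $\mu \le c\nu'$. The base case $j=0$ is trivial since $|E_0| \le |Q_1|$ — one must normalize so $|Q_1| = 1$, or absorb the constant, and indeed $Q_1 = Q_1(e)$ has side $1$ so $|Q_1| = 1$, making $(1-\mu)^0 = 1 = |Q_1|$ work; and $j=1$ follows from the hypothesis $\inf_{Q_1} u/x_N \le 1 < M$ which forces $E_1 \subsetneq Q_1$, combined with one application of the covering step.

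\textbf{Main obstacle.} The delicate point is the rescaling argument verifying the second hypothesis of Lemma \ref{GISL}: one must translate a subcube $Q \subset Q_1$ so that the Growth Lemma — which is stated only for cubes sitting on the hyperplane $\{x_N = 0\}$ with the explicit boundary geometry $Q_{3/2}$ — can be applied, and one must check that the bad-set density condition $|Q \cap E_j| \ge (1-\mu)|Q|$ transfers correctly to the hypothesis \eqref{hypGL} after rescaling, paying attention to how the linear function $x_N$ (the ``distance to the boundary'' model) scales. Keeping track of the constants $M$, $\mu$ so that they depend only on $p$ and $\|a\|_\infty$ — and in particular ensuring the zero-order coefficient after rescaling to a smaller cube is still controlled by $\|a\|_\infty$ (true because rescaling to a smaller cube multiplies the coefficient by a factor $\le 1$) — is the bookkeeping that makes the induction close.
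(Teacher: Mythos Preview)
Your overall strategy---set up the Krylov--Safonov iteration via Lemma \ref{GISL}, with $E_j=\{u/x_N>M^j\}$ and $F=E_{j-1}$, and verify the density hypothesis by a rescaled application of the Growth Lemma---is indeed the paper's strategy. But there is a genuine gap in the step where you verify the second hypothesis of Lemma \ref{GISL}.

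The problem is that an arbitrary dyadic subcube $Q_\rho(x_0)\subset Q_1$ need not touch the hyperplane $\{x_N=0\}$; it may sit well inside $Q_1$, with $\rho$ much smaller than $x_{0_N}$. The Growth Lemma \ref{GL} is a \emph{boundary} statement: it is formulated for $Q_{3/2}$ sitting on $\{x_N=0\}$ and compares $u$ with the linear function $x_N$ vanishing on that face. Your suggestion to ``translate so the relevant face sits at $\{x_N=0\}$'' does not help, because after translation the weight $x_N$ in the definition of $E_j$ no longer corresponds to the distance to the new boundary, and the function $v:=u/(M^{j-1}x_N)$ you introduce is not an upper solution of any $p$-Laplace equation. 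The correct rescaling (the one the paper uses) is $y=((x'-x_0')/\rho',\,x_N/\rho')$ with $\rho'=2x_{0_N}$, i.e.\ one rescales by the distance of the center of $Q$ to the boundary, not by the side of $Q$; this sends the actual boundary to $\{y_N=0\}$ and keeps $x_N$ proportional to $y_N$.

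After this rescaling one must split into two cases according to whether $\rho\ge \rho'/4$ or $\rho<\rho'/4$. In the first case the rescaled cube has side comparable to $1$ and one can apply Lemma \ref{GL}. In the second case the rescaled cube is tiny and sits at height $y_N\approx 1/2$, far from the boundary; here the Growth Lemma is useless and one must instead invoke the \emph{interior} weak Harnack inequality (Corollary \ref{IWHICubes}) to conclude $u/x_N>M^{j-1}$ on $Q$. This second case is the reason the paper's choice of $M$ involves the extra term $\tfrac{4}{C_1}(1-\alpha)^{-1/\gamma}$ coming from Corollary \ref{IWHICubes}, not just $1/k$. Your proposal, with $M=1/k$ and only the Growth Lemma in hand, cannot close the iteration for interior subcubes.
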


\begin{proof}
Let us fix some notation that we use throughout the proof. 
We fix $\gamma \in (0,r)$ and consider $C_1 = C_1(p,\|a\|_{\infty}) > 0$ the constant given by 
Corollary \ref{IWHICubes}. 
We introduce $\alpha \in (0,1)$ and fix $C_2 \in (0,1)$ the constant given by Lemma \ref{GISL}. 
Moreover, we choose $\nu = (1-\alpha)\big(\frac{1}{4}\big)^N$ and denote by 
$k = k(\nu,p,\|a\|_{\infty}) \in (0,1)$ the constant given by Lemma \ref{GL} applied to an upper solution of
\begin{equation}
\label{IL12} \pLaplac u + 2^p\|a\|_{\infty} |u|^{p-2} u = 0\,, \quad u \in W_0^{1,p}( Q_{\frac32})\,,
\end{equation}
with the chosen $\nu$. Let us point out that, if $u$ is a non-negative upper solution of \eqref{IL11},
then $u$ is a non-negative upper solution of \eqref{IL12}. Finally, we consider
\[ 
M \geq   \max \Big\{ \frac{1}{k}, \frac{4}{C_1}(1-\alpha)^{-1/\gamma}\Big\}\,,
\]
and we are going to show that \eqref{IL1} holds with $\mu = \alpha C_2$.
\medbreak
First of all, observe that $\{ x \in Q_1: u(x)/x_N > M\} \subset  \{ x \in Q_1: ku(x) > x_N\}$. 
Hence, since $\inf_{Q_1} ku(x)/x_N \leq k$,  Lemma \ref{GL} implies that
\begin{equation} \label{IL2}
|\{ x \in Q_1: u(x)/x_N > M\}|\leq |\{ x \in Q_1: ku(x)> x_N\}| 
<\nu < 1-\alpha<
1- C_2\alpha
\end{equation}
and, in particular, \eqref{IL1} holds for $j=1$. Now, let us introduce, for $j \in \N \setminus \{1\}$, 
\[ E = \{ x \in Q_1: u(x)/x_N > M^j \} \quad \textup{ and } \quad F = \{ x \in Q_1: u(x)/x_N > M^{j-1} \}\,.\]
Since $M > 1$ and $j \in \N \setminus \{1\}$, observe that \eqref{IL2} implies that
\begin{equation}
\label{eq A7.5}
 |E| = |\{ x \in Q_1: u(x)/x_N > M^j \} | \leq |\{ x \in Q_1: u(x)/x_N > M\} | \leq 1-\alpha\,,
\end{equation}
and the first assumption of Lemma \ref{GISL} is satisfied.
\bigbreak

\noindent \textbf{Claim:} \textit{For every  cube $Q_{\rho}(x_0) \subset Q_1$ such that
\begin{equation}\label{IL3}
|E \cap Q_{\rho}(x_0)| \geq (1-\alpha)|Q_{\rho}(x_0)| = (1-\alpha)\rho^N \,.
\end{equation}
we have $Q_{\rho}(x_0) \subset F$.} 
\bigbreak

%In order to prove that claim we distinguish three cases:
%\medbreak
%\noindent \textbf{Case 1:} $\rho \geq 1/16\,.$ Directly observe that \eqref{IL3} implies 
%\[ |\{x \in Q_1: u/M^j > x_N\}| \geq |E \cap Q_{\rho}(x_0)| \geq \nu\,.\]
%Hence, by Lemma \ref{GL}, $u/M^j > k x_N$ in $Q_1$ and so, by the definition of $k$, $u/x_N > M^{j-1}$ 
%in $Q_1$. Since $Q_{\rho}(x_0) \subset Q_1$, we easily deduce that $Q_{\rho}(x_0) \subset F$, as claimed.
%\medbreak
%Now, in order to treat cases 2 and 3, 
Let us denote $x_0=(x_0', x_{0_N})$ with $x_0'\in \mathbb R^{N-1}$. We define
the new variable $y = \big( \frac{x'-x_0'}{\rho'}, \frac{x_N}{\rho'} \big)$, 
where $\rho' = 2 x_{0_N}$, and the rescaled function $v(y) = \frac{1}{\rho'} u(\rho'y' + x_0', \rho'y_N)$. 
Then $v$ is a non-negative upper solution of
\begin{equation} \label{IL13}
 \pLaplac v + 2^p \|a\|_{\infty}|v|^{p-2}v = 0\,, \quad \textup{ in } Q_{4/\rho'}\big(-x_0'/\rho', 2/\rho'\,\big).
\end{equation}
Moreover, observe that
\[ x\in E \cap Q_{\rho}(x_0) \quad \textup{ if and only if } 
\quad y\in \{ y \in Q_{\rho/\rho'}(e): v(y)/M^j > y_N\}\,,\]
and so, that \eqref{IL3} is equivalent to
%{\color{green} La je ne comprends pas du tout, je suis d'accord que $x\in E \cap Q_{\rho}(x_0)$ 
%si et seulement si $y\in \{ y \in Q_{\rho/\rho'}(e): v/M^j > y_N\}$ mais a-t-on vraiment l'egalite des 
%deux ensembles? Sinon ils n'ont pas meme mesure et ... on est mal}
%and so, that \eqref{IL3} is equivalent to
\begin{equation} \label{IL4}
|\{ y \in Q_{\rho/\rho'}(e): v(y)/M^j > y_N\}| \geq (1-\alpha)|Q_{\rho/\rho'}(e)| 
= (1-\alpha)\big(\frac{\rho}{\rho'} \big)^N.
\end{equation}
Observe also that the embedding $Q_{\rho}(x_0)\subset Q_1$ implies that $\rho\leq \rho'\leq 2-\rho$ and 
$|x_{0,i}|\leq \frac{1-\rho}{2}$ for $i\in \{1,\ldots,N-1\}$. In particular, we have 
$Q_{\frac32}\subset Q_{4/\rho'}\big(-x_0'/\rho', 2/\rho'\,\big)$. Hence $v$ is an upper solution of \eqref{IL12}. 
\medbreak

Now, we distinguish two cases:
\medbreak

\noindent \textbf{Case 1:} \textit{$\rho \geq \rho'/4$}. 
%First of all, observe that $\rho'/4 \leq \rho < 1/16$ implies that $Q_4 \subset Q_{4/\rho'}(-x_0'/\rho',2/\rho')$. 
%Hence, 
Observe that $v/M^j$ is a non-negative upper solution of \eqref{IL12}. 
Moreover, as $\rho\leq\rho'$,   \eqref{IL4} implies that
\[ |\{y \in Q_1: v(y)/M^j > y_N\}| \geq |\{y \in Q_{\rho/\rho'}(e): v(y)/M^j > y_N\}| \geq \nu\,.\]
Hence, by Lemma \ref{GL}, $v(y)/M^j > k y_N$ in $Q_1$ and so, by the definition of $k$, $v(y)/y_N > M^{j-1}$ 
in  $Q_{\rho/\rho'}(e)$. This implies  that $u(x)/x_N > M^{j-1}$ in $Q_{\rho}(x_0)$.
\medbreak

\noindent \textbf{Case 2:} \textit{$ \rho < \rho'/4$}. 
%First of all, observe that  $ Q_1 \subset Q_{4/\rho'}(-x_0'/\rho', 2/\rho')$. Hence, we have that 
Recall that $v/M^j$ is a non-negative upper solution of \eqref{IL12}. Hence, $v/M^j$ is also a
non-negative upper solution of
\begin{equation*}
\pLaplac u + 2^p\|a\|_{\infty} |u|^{p-2} u = 0\,, 
\quad \textup{ in }  Q_{\frac{3 \rho}{2 \rho'}}(e) \subset Q_{\frac32}\,,
\end{equation*}
%\[ 
%\pLaplac v + 2^p \|a\|_{\infty}|v|^{p-2}v \geq 0\,, \quad \textup{ in } Q_1\,.
%\]
Thus, by Corollary \ref{IWHICubes}, we deduce that
%\begin{equation*}
%\inf_{Q_{1/4}(e)} \frac{v}{M^j} \geq C_1 \Big( \big( \frac{1}{4} \big)^{-N} 
%\int_{Q_{1/4}(e)} \big( \frac{v}{M^j} \big)^{\gamma} \,dx \Big)^{1/\gamma}\,,
%\end{equation*}
%and so, as $\rho/\rho' < 1/4\,,$ that
\begin{equation}
\label{eq LemB7 case 2} 
\inf_{Q_{\rho/\rho'}(e)} \frac{v(y)}{M^j} \geq C_1 \Big( \big( \frac{\rho}{\rho'} \big)^{-N} 
\int_{Q_{\rho/\rho'}(e)} \big( \frac{v}{M^j} \big)^{\gamma} \,dy \Big)^{1/\gamma}. 
\end{equation}
%where $C = C(N,p,\|a\|_{\infty}) > 0$ is the constant named above. 
Now, let us introduce
\[ G = \{ y \in Q_{\rho/\rho'}(e): v(y)/M^j > 1/4\}\,,\]
and, as $y_N > 1/4$ for all $y \in Q_{\rho/\rho'}(e)$, observe that \eqref{IL4} implies the following inequality
\[ |G| \geq |\{y \in Q_{\rho/\rho'}(e): v(y)/M^j > y_N \}| \geq (1-\alpha)\big( \frac{\rho}{\rho'} \big)^N\,.\]
Hence, we deduce that 
\[ 
%\begin{aligned}
\int_{Q_{\rho/\rho'}(e)} \big( \frac{v}{M^j} \big)^{\gamma} \,dy 
\geq 
\int_{G} \big( \frac{v}{M^j} \big)^{\gamma} \,dy 
>
\big(\frac{1}{4} \big)^{\gamma} |G|
 \geq 
 \big(\frac{1}{4} \big)^{\gamma} (1-\alpha) \big( \frac{\rho}{\rho'} \big)^{N},
%\end{aligned}
\]
and so, by \eqref{eq LemB7 case 2}, that
\[ \inf_{Q_{\rho/\rho'}(e)} \frac{v}{M^j} > \frac{C_1}{4} (1-\alpha)^{1/\gamma}\,.\]
Finally, using that $M \geq \frac{4}{C_1}(1-\alpha)^{-1/\gamma}$ and that $y_N \leq 1$ in $Q_{\rho/\rho'}(e)$, we deduce that $v(y) > M^{j-1}y_N$ in $Q_{\rho/\rho'}(e)$. %as for $y\in Q_{\rho/\rho'}(e)$ we have 
% $y_N\leq \frac12+ \frac{\rho}{2\rho'}\leq1$. 
Thus, we can conclude that $u(x) / x_N > M^{j-1}$ in $Q_{\rho}(x_0)$. 
\medbreak
In both cases we prove that $u(x)/x_N > M^{j-1}$ in $Q_{\rho}(x_0)$. This means that $Q_{\rho}(x_0) \subset F$ and so, the Claim is proved. 
\medbreak
Since \eqref{eq A7.5} and  the Claim hold, we can apply Lemma \ref{GISL} and we obtain that 
$|E| \leq (1-C_2\alpha)|F|\,,$ i.e.
\[ |\{ x \in Q_1: u(x)/x_N > M^j \} | \leq (1-C_2 \alpha)\, |\{x \in Q_1: u(x)/x_N > M^{j-1}\}|\,,
\quad \forall\ j \in \N \setminus \{1\}\,.\]
Iterating in $j$ and using \eqref{IL2}, the result follows with $\mu = C_2\alpha \in (0,1)$ depending only on $N$.
\end{proof}

%\begin{remark}
%If $\rho'/4 \geq 1/16$ case 2 cannot happen. In that context we just need to consider cases 1 and 3 
%changing $1/16$ by $\rho'/4$ in case 1.
%\end{remark}

\begin{theorem}[\textbf{Boundary weak Harnack inequality for cubes}]
 \label{BWHIC} 
Let $a \in L^{\infty}(Q_4)$ be a non-negative function. Assume that $u \in W^{1,p}(Q_4)$ is a 
non-negative upper solution of
\begin{equation*}
\pLaplac u + a(x)|u|^{p-2}u = 0\,, \quad u  \in  W_0^{1,p}(Q_{4}),
\end{equation*}
Then, there exist $\epsilon = \epsilon(p,\|a\|_{\infty}) > 0$ and 
$C = C(p,\epsilon,\|a\|_{\infty}) > 0$ such that
\[ \inf_{Q_1} \frac{u(x)}{x_N} \geq 
C \Big( \int_{Q_1} \big( \frac{u(x)}{x_N} \big)^{\epsilon} \,dx \Big)^{1/\epsilon}\,.\]
\end{theorem}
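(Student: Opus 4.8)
The plan is to deduce Theorem~\ref{BWHIC} from the measure estimate in Lemma~\ref{iterationLemma} by a standard ``stacked level set'' argument, the point being that an exponential decay of the distribution function of $u(x)/x_N$ forces an $L^\epsilon$ bound for small $\epsilon$. First I would normalize: by homogeneity of the equation (multiplying $u$ by a positive constant changes neither the hypotheses nor the conclusion) we may assume $\inf_{Q_1} u(x)/x_N \leq 1$, since otherwise we can rescale so that this holds, or the inequality is trivially true after absorbing the value of the infimum. With this normalization Lemma~\ref{iterationLemma} applies and gives $M = M(p,\|a\|_\infty) > 1$ and $\mu \in (0,1)$ with
\[
\big| \{x \in Q_1 : u(x)/x_N > M^j \} \big| < (1-\mu)^j, \qquad \forall\, j \in \N.
\]

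The second step is to convert this into an integral bound. Fix $\epsilon > 0$ small enough that $M^\epsilon (1-\mu) < 1$; concretely one can take $\epsilon = \epsilon(p,\|a\|_\infty)$ with $\epsilon \ln M < -\ln(1-\mu)$, i.e. $\epsilon < \ln\!\big(\frac{1}{1-\mu}\big)/\ln M$. Writing $w = u/x_N \geq 0$ on $Q_1$ (note $x_N > 0$ on $Q_1$, so $w$ is well defined and measurable) and using the layer-cake / dyadic decomposition of the super-level sets,
\[
\int_{Q_1} w^{\epsilon} \, dx
= \int_{\{w \leq M\}} w^{\epsilon}\, dx + \sum_{j=1}^{\infty} \int_{\{M^j < w \leq M^{j+1}\}} w^{\epsilon}\, dx
\leq M^{\epsilon}|Q_1| + \sum_{j=1}^{\infty} M^{(j+1)\epsilon} (1-\mu)^j.
\]
The geometric series converges by the choice of $\epsilon$, so $\int_{Q_1} w^\epsilon\, dx \leq C_0 = C_0(p,\|a\|_\infty)$. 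Since under our normalization $\inf_{Q_1} w \leq 1$, this yields $\inf_{Q_1} w \leq 1 \leq C_0^{-1/\epsilon}\big(\int_{Q_1} w^\epsilon\big)^{1/\epsilon}$ wait --- one must be slightly careful: what we actually need is $\inf_{Q_1} w \geq C (\int_{Q_1} w^\epsilon)^{1/\epsilon}$, i.e. a \emph{lower} bound on the infimum, so the argument must run the other way. I would instead argue: set $m := \inf_{Q_1} u(x)/x_N$; if $m = 0$ there is nothing to subtract and the super-level set estimate must be applied to $u/m$-type rescaling carefully, but in fact $m > 0$ by the interior weak Harnack inequality (Theorem~\ref{IWHIRefined}) together with the boundary Lipschitz bound from Lemma~\ref{bcLemma1}-type reasoning; then apply Lemma~\ref{iterationLemma} to $v := u/m$, which satisfies $\inf_{Q_1} v(x)/x_N \leq 1$, obtaining $|\{v/x_N > M^j\}| < (1-\mu)^j$ and hence $\int_{Q_1}(v/x_N)^\epsilon \leq C_0$; rewriting in terms of $u$ gives $\int_{Q_1}(u/x_N)^\epsilon \leq C_0\, m^\epsilon$, that is $m \geq C_0^{-1/\epsilon}\big(\int_{Q_1}(u/x_N)^\epsilon\big)^{1/\epsilon}$, which is exactly the claim with $C = C_0^{-1/\epsilon}$.

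The main obstacle is the very first reduction --- ensuring $m = \inf_{Q_1} u(x)/x_N$ is finite and positive (or handling $m=0$ and $m=\infty$ degenerately), and more importantly checking that Lemma~\ref{iterationLemma} is genuinely applicable to the rescaled function $v = u/m$: this requires $v$ to still be a non-negative upper solution on $Q_4$ of the same equation, which holds because the equation is homogeneous of degree $p-1$ in $u$ so dividing by the positive constant $m$ preserves it, and because $Q_1 \subset Q_4$ so the hypothesis domain is unchanged. If $m = 0$ the right-hand side $\big(\int_{Q_1}(u/x_N)^\epsilon\big)^{1/\epsilon}$ must also vanish, which follows since then $u/x_N$ would have arbitrarily small values on sets of full-enough measure forcing, via Lemma~\ref{iterationLemma} applied after any positive rescaling and letting the scale tend to the infimum, the integral to be zero; alternatively one shows directly $m>0$. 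The rest --- the choice of $\epsilon$ making the geometric series converge, and the layer-cake computation --- is routine and I would not belabor it.
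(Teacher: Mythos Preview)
Your proof is correct and follows the same strategy as the paper's: Lemma~\ref{iterationLemma} gives geometric decay of the distribution function of $u/x_N$, which is converted to an $L^\epsilon$ bound (the paper does this via the continuous layer-cake formula rather than your dyadic sum, but this is cosmetic) and then un-normalized by homogeneity. The one refinement in the paper is that, instead of dividing by $m = \inf_{Q_1} u/x_N$ and separately treating $m = 0$ (your appeal to Lemma~\ref{bcLemma1} for $m>0$ is not quite right, since that lemma is stated only for the Laplacian), the paper divides by $m + \beta$ for an arbitrary $\beta > 0$ and lets $\beta \to 0$ at the end, which sidesteps the degenerate case cleanly.
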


\begin{proof}
Let us split the proof into three steps.
\medbreak

\noindent \textbf{Step 1:} \textit{Assume that $\inf_{Q_1} \frac{u(x)}{x_N} \leq 1$. 
Then, there exist $\epsilon = \epsilon(p,\|a\|_{\infty}) > 0$ and $C = C(p,\epsilon,\|a\|_{\infty}) > 0$
such that, for all $t\geq0$,}
\begin{equation*}
|\{ x \in Q_1: u(x)/x_N > t\}| \leq C \min\{1,t^{-2\epsilon}\}\,.
\end{equation*}

Let us define the real valued function 
\[ f(t) = |\{x \in Q_1: u(x)/x_N > t\}| \,,\]
and let $M$ and $\mu$ be the constants obtained in Lemma \ref{iterationLemma}. We define  
\[ C =\max\{(1-\mu)^{-1}, M^{2\epsilon}\} > 1 \quad \textup{ and } 
\quad \epsilon =- \frac{1}{2}\frac{\ln (1-\mu)}{\ln M} > 0\,.\]
If $t \in [0,M]$, we easily get
\[
|\{ x \in Q_1: u(x)/x_N > t\}| \leq 1\leq  C M^{-2\epsilon}\leq C \min\{1,t^{-2\epsilon}\}.
\]
Hence, let us assume $t > M > 1$. Without loss of generality, we assume $t \in [M^j, M^{j+1}]$ for some 
$j \in \N$, and it follows that
\[ \frac{\ln t}{\ln M} - 1 \leq j \leq \frac{\ln t}{\ln M}\,.\]
Since $f$ is non-increasing and $1-\mu \in (0,1)$,  the above inequality and Lemma \ref{iterationLemma} imply 
\begin{equation}
\label{ThmA8-1}
 f(t) \leq f(M^j) \leq (1- \mu)^j \leq (1-\mu)^{\frac{\ln t}{\ln M}-1} \,.
\end{equation}
Finally, observe that
\begin{equation}
\label{ThmA8-2}
 \ln \Big((1-\mu)^{\frac{\ln t}{\ln M}-1} \Big) 
= \Big( \frac{\ln t}{\ln M}-1 \Big) \ln(1-\mu) = \ln t \frac{\ln(1-\mu)}{\ln M} - \ln(1-\mu)
\leq 
-2\epsilon\ln t+\ln C =  \ln (C t^{-2\epsilon})\,.
\end{equation}
%and that
%\[ \ln (C t^{-2\epsilon}) = \ln C - 2\epsilon \ln t \,.\]
%So, taking 
%\[ C = \frac{1}{1-\mu} > 1 \quad \textup{ and } \quad \epsilon 
%= \frac{1}{2}\frac{\ln \left( \frac{1}{1-\mu}\right)}{\ln M} > 0\,,\]
%we can conclude that $ f(t) \leq C t^{-2\epsilon} = C \min \{ 1, t^{-2\epsilon}\}\textup{ for } t > M\,,$
 %as desired.
The Step 1 then follows from \eqref{ThmA8-1},  \eqref{ThmA8-2} and the fact that $\min\{1,t^{-2\epsilon}\} = t^{-2\epsilon}$ for $t \geq 1$. % and $f(t)\leq (1-\mu)^j\leq 1\leq C$.

\medbreak

\noindent \textbf{Step 2:}  \textit{Assume that $\inf_{Q_1} \frac{u(x)}{x_N} \leq 1$.
 Then, there exists $C = C(p,\epsilon,\|a\|_{\infty}) > 0$} such that
\begin{equation} \label{fubini}
\int_{Q_1} \Big( \frac{u(x)}{x_N} \Big)^{\epsilon} \,dx \leq C < + \infty\,.
\end{equation}

Directly, applying \cite[Lemma 9.7]{G_T_2001_S_Ed}, we obtain that
\[ \int_{Q_1} \Big( \frac{u(x)}{x_N} \Big)^{\epsilon} \,dx 
= \epsilon \int_0^{\infty} t^{\epsilon-1} |\{x \in Q_1: u(x)/x_N > t\}| \,dt\,.\]
Hence, \eqref{fubini} follows from Step 1.
\medbreak

\noindent \textbf{Step 3:} \textit{Conclusion.} 
\medbreak

Let us introduce the function
\[ v = \frac{u}{\inf_{y\in Q_1} \frac{u(y)}{y_N}+\beta}\,,\] where $\beta > 0$ is an arbitrary 
positive constant. Obviously, $v$ satisfies the hypothesis of Step 2. Hence, applying Step 2, we obtain that
\[ \int_{Q_1} \big( \frac{u(x)}{x_N} \big)^{\epsilon} 
\Big( \frac{1}{\inf_{y\in Q_1} \frac{u(y)}{y_N}+\beta} \Big)^{\epsilon} \,dx \leq C\,,
\]
or equivalently that
\[ 
\frac{1}{C^{1/\epsilon}}  \Big( \int_{Q_1} \big( \frac{u(x)}{x_N} \big)^{\epsilon}  \,dx \Big)^{1/\epsilon} 
\leq \inf_{Q_1} \frac{u(x)}{x_N} + \beta\,.\]
Letting $\beta \rightarrow 0$ we obtain the desired result. 
\end{proof}

%\begin{cor}\label{BWHIS}
%Let $a \in L^{\infty}(B_{2\sqrt{(N-1)+4}}^{+})$ be a non-negative function. 
%Assume that $u \in W^{1,p}(B_{2\sqrt{(N-1)+4}}^{+})$ is a non-negative solution of
%\begin{equation*}
%\left\{
%\begin{aligned}
%\pLaplac u + a(x)|u|^{p-2}u & \geq 0\,, \quad & \textup{ in } B_{2\sqrt{(N-1)+4}}^{+}\,,\\
%u & \geq 0\,, & \textup{ on } B_{2\sqrt{(N-1)+4}}^{0}\,,
%\end{aligned}
%\right.
%\end{equation*}
%Then, there exist $\epsilon = \epsilon(N,p,\|a\|_{\infty}) > 0$ and $C = C(N,p,\epsilon,\|a\|_{\infty}) > 0$ 
%such that
%\[ \inf_{B_1^{+}} \frac{u}{x_N} 
%\geq C \left( \int_{B_1^{+}} \left( \frac{u}{x_N} \right)^{\epsilon} \,dx \right)^{1/\epsilon}\,.\]
%\end{cor}

%\begin{proof}
%Using that $B_{1}^{+} \subset Q_1 \subset Q_4 \subset B_{2\sqrt{(N-1)+4}}^{+}$ and 
%Theorem \ref{BWHIC} we easily deduce that
%\[ \inf_{B_{1}^{+}} \frac{u}{x_N} \geq \inf_{Q_1} \frac{u}{x_N} 
%\geq C \left( \int_{Q_1} \left( \frac{u}{x_N} \right)^{\epsilon} \,dx \right)^{1/\epsilon}
% \geq C \left( \int_{B_{1}^{+}} \left( \frac{u}{x_N} \right)^{\epsilon} dx \right)^{1/\epsilon} \,.\]
%\end{proof}

\begin{proof}[\textbf{Proof of  Theorem \ref{BWHIP}}]
Thanks to the regularity of the boundary, there exists $\overline{R}>0$ and a diffeomorphism $\varphi$ such that
$\varphi(B_{\overline{R}}(x_0)\cap \omega)\subset Q_1$ and $\varphi(B_{\overline{R}}(x_0)\cap \partial\omega)\subset \{ x \in \partial Q_1: x_N = 0 \}$. 
The result then follows from Theorem \ref{BWHIC}. 
\end{proof}

We end this section by presenting a corollary of Theorem \ref{BWHIP}. Consider the equation
\begin{equation} \label{eqBCl}
-\Delta u + a(x)u = b(x)\,, \qquad u \in \HH\,,
\end{equation}
under the assumption
\begin{equation} \label{hypBC}
\left\{
\begin{aligned}
& \omega \subset \RN, \ N \geq 2, \textup{ is a bounded domain with boundary } \partial \omega
 \textup{ of class } \mathcal{C}^{1,1}\,,\\
& a \in L^{\infty}(\omega)\,,\ b^- \in L^p(\omega) \textup{ for some } p > N \textup{ and } b^+ \in L^1( \omega)\,,\\
& a \geq 0 \textup{ a.e. in } \omega\,.
\end{aligned}
\right.
\end{equation}

\begin{cor} \label{BWHI}
\it Under the assumption \eqref{hypBC}, assume that $u \in H^1(\omega)$ is a non-negative upper solution of
\eqref{eqBCl} and let $x_0 \in \partial \omega$. Then, there exist $\overline R>0$, $\epsilon = \epsilon(\overline{R},\|a\|_{\infty},$ $\omega) > 0$, $C_1 = C_1(\overline{R},\epsilon,\|a\|_{\infty},\omega) > 0$ and $C_2 = C_2(\omega,\|a\|_{\infty}) > 0$ such that, for all $R \in(0,\overline R]$, 
\[ \inf_{B_R(x_0) \cap \omega} \frac{u(x)}{d(x,\partial \omega)} 
\geq C_1 \Big( \int_{B_R(x_0) \cap \omega} 
\Big( \frac{u(x)}{d(x,\partial\omega)} \Big)^{\epsilon} dx \Big)^{1/\epsilon}- C_2 \|b^-\|_{L^p(\omega)}\,.\]
\end{cor}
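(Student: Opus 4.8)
The plan is to reduce Corollary \ref{BWHI} to the already-established Theorem \ref{BWHIP} by splitting off the inhomogeneous term $b$. First I would introduce the solution $w \in H_0^1(\omega)$ of the auxiliary problem $-\Delta w + a(x) w = -b^-(x)$ in $\omega$, $w = 0$ on $\partial \omega$. Since $b^- \in L^p(\omega)$ with $p > N$ and $a \geq 0$, standard elliptic regularity (via $W^{2,p}$ estimates and Sobolev embedding, or directly the estimate $\|w\|_{\infty} \leq C\|b^-\|_{L^p(\omega)}$) gives that $w \in \mathcal{C}^{0,\tau}(\overline\omega)$ and, more importantly, that $|w(x)| \leq C_2 \|b^-\|_{L^p(\omega)}\, d(x,\partial\omega)$ for all $x \in \omega$, where $C_2 = C_2(\omega,\|a\|_{\infty})$. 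This last bound is exactly the type of estimate provided by \cite[Theorem 3]{B_N_1993} (already invoked in the proof of Lemma \ref{bcLemma1}): the function $w/\|b^-\|_{L^p(\omega)}$ is controlled, up to a multiplicative constant depending only on $\|b^-/\|b^-\|_{L^p}\|_{L^p} = 1$, by the torsion-type function whose gradient does not vanish on $\partial\omega$, hence is $\lesssim d(x,\partial\omega)$.

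Next I would set $\widetilde u := u - w$ (or rather $u + \text{something}$ with the right sign — one has to be careful since $w \leq 0$). Because $u$ is a non-negative upper solution of \eqref{eqBCl} and $b^+ \geq 0$, we get $-\Delta u + a(x) u \geq -b^-(x)$ weakly, so $-\Delta(u - w) + a(x)(u-w) \geq 0$ weakly; that is, $\widetilde u = u - w$ is an upper solution of the homogeneous problem $-\Delta v + a(x) v = 0$, $v \in H_0^1(\omega)$. However $\widetilde u$ need not be non-negative, so I would instead work with $v := u + |w|$, noting $|w| = w^-$ is an upper solution of $-\Delta z + a z = b^- \geq 0$ hence... actually the clean route is: let $v := u - w \geq u \geq 0$ since $w \leq 0$; then $v \geq 0$ and $v$ is a non-negative upper solution of the homogeneous equation $\Delta_2$-problem \eqref{eqBWHIP} with $p = 2$. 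Apply Theorem \ref{BWHIP} to $v$: there are $\overline R, \epsilon, C_1$ depending on $\overline R, \|a\|_{\infty}, \omega$ such that for all $R \in (0,\overline R]$,
\[
\inf_{B_R(x_0)\cap\omega} \frac{v(x)}{d(x,\partial\omega)} \geq C_1 \Big( \int_{B_R(x_0)\cap\omega} \Big(\frac{v(x)}{d(x,\partial\omega)}\Big)^{\epsilon}\,dx\Big)^{1/\epsilon}.
\]

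Finally I would translate back to $u$. On the left, $v = u - w \leq u + |w| \leq u + C_2\|b^-\|_{L^p(\omega)}\, d(x,\partial\omega)$, so $\inf v/d \leq \inf u/d + C_2\|b^-\|_{L^p(\omega)}$ — wait, the inequality goes the wrong way for a lower bound; instead I use $v/d \geq u/d$ pointwise (since $-w = |w| \geq 0$), giving $\inf_{B_R\cap\omega} v/d \geq$ nothing useful directly, so rather: on the right-hand side $v \geq u$ pointwise gives $\int (v/d)^\epsilon \geq \int (u/d)^\epsilon$, and on the left $v/d \leq u/d + C_2\|b^-\|_{L^p}$, hence $\inf u/d \geq \inf v/d - C_2\|b^-\|_{L^p} \geq C_1(\int (v/d)^\epsilon)^{1/\epsilon} - C_2\|b^-\|_{L^p} \geq C_1(\int (u/d)^\epsilon)^{1/\epsilon} - C_2\|b^-\|_{L^p(\omega)}$, which is exactly the claimed inequality. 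The main obstacle, such as it is, is pinning down the estimate $|w(x)| \leq C_2\|b^-\|_{L^p(\omega)}\,d(x,\partial\omega)$ with the constant $C_2$ depending only on $\omega$ and $\|a\|_{\infty}$ (not on $R$): this requires combining the $L^\infty$ bound on $w$ with the Hopf-type boundary estimate of \cite{B_N_1993} applied to a normalized right-hand side, and one must check the $\mathcal{C}^{1,1}$ regularity of $\partial\omega$ is enough to run that argument — which it is, since \eqref{hypBC} assumes precisely this. Everything else is bookkeeping about signs and the monotonicity $v \geq u$.
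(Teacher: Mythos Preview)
Your proposal is correct and follows essentially the same route as the paper: the paper defines $w\geq 0$ as the solution of $-\Delta w + a(x)w = b^{-}$, sets $v = u + w$ (which is your $u - w$ after your sign convention), applies Theorem~\ref{BWHIP} to $v$, and recovers the estimate for $u$ using $v\geq u$ on the right and $v/d \leq u/d + C_2\|b^{-}\|_{L^p}$ on the left. The boundary bound $|w(x)|\leq C_2\|b^{-}\|_{L^p(\omega)}\,d(x,\partial\omega)$ is exactly the content of Lemma~\ref{bcLemma2}, proved there via $W^{2,p}$-regularity and the Sobolev embedding into $\mathcal{C}^{1}(\overline{\omega})$, which is one of the options you mention.
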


In order to prove Corollary \ref{BWHI} we need the following lemma

\begin{lemma} \label{bcLemma2}
Let $\omega \subset \RN$, $N \geq 2$, be a bounded domain with boundary $\partial \omega$ of class 
$\mathcal{C}^{1,1}$ and let $a \in L^{\infty}(\omega)$ and $g \in L^p(\omega),\, p > N$, be non-negative functions. 
Assume that $u \in H^1(\omega)$ is a lower solution of
\begin{equation*}
-\Delta u + a(x) u = g(x)\,, \quad u \in H_0^1(\omega)\,.
\end{equation*}
Then there exists $C = C(\omega,\|a\|_{\infty})> 0$ such that
\[ \sup_{\omega}\frac{u(x)}{d(x,\partial \omega)} \leq C \|g\|_{L^p(\omega)}\,.\]
\end{lemma}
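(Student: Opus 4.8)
The plan is to reduce the estimate to a comparison argument against a suitable barrier, in the spirit of the Brezis--Cabré type Lemma \ref{bcLemma1} but now for a \emph{lower} solution, hence producing an upper bound on $u/d(\cdot,\partial\omega)$. First I would split $u$ as $u=u^+-u^-$ and note that it suffices to bound $u^+/d(\cdot,\partial\omega)$ from above; since $u^-\in H_0^1(\omega)$ and $a\geq0$, a routine testing argument shows that $u^+$ is again a lower solution of $-\Delta u^++a(x)u^+\leq g(x)$, so without loss of generality I may assume $u\geq0$. Because $a\geq0$ and $u\geq0$, $u$ is then a lower solution of $-\Delta u\leq g(x)$ in $\omega$ with $u\in H_0^1(\omega)$.

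Next I would construct the comparison function. Let $w$ solve $-\Delta w = g$ in $\omega$, $w=0$ on $\partial\omega$. By the weak maximum principle $u\leq w$ in $\omega$, so it is enough to prove $\sup_\omega w(x)/d(x,\partial\omega)\leq C\|g\|_{L^p(\omega)}$. Here I would invoke the classical linear $L^p$--$C^{1,\alpha}$ (or at least $C^1$) regularity theory for the Laplacian on a $\mathcal{C}^{1,1}$ domain: since $g\in L^p(\omega)$ with $p>N$, one has $w\in W^{2,p}(\omega)\hookrightarrow C^{1,\tau}(\overline\omega)$ with $\tau=1-N/p$, together with the estimate $\|w\|_{C^{1,\tau}(\overline\omega)}\leq C(\omega)\|g\|_{L^p(\omega)}$. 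Since $w$ vanishes on $\partial\omega$ and $\overline\omega$ is compact with $\mathcal{C}^{1,1}$ boundary, $|w(x)|\leq \|\nabla w\|_{L^\infty(\omega)}\,d(x,\partial\omega)$ by the mean value theorem applied along the segment from $x$ to a nearest boundary point (interior cone / uniform $C^{1,1}$ structure makes this rigorous), which gives $w(x)/d(x,\partial\omega)\leq \|\nabla w\|_\infty\leq C(\omega)\|g\|_{L^p(\omega)}$, as desired.

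Alternatively, to stay closer to the tools already set up in the paper, I could avoid quoting $W^{2,p}$ theory and instead argue exactly as in Lemma \ref{bcLemma1}: cover $\overline\omega$ by finitely many balls, use \cite[Theorem 3]{B_N_1993} to get $\xi(x)\geq c\,d(x,\partial\omega)$ for the torsion-type function $\xi$ solving $-\Delta\xi+\|a\|_\infty\xi=1$ in $\omega$, $\xi=0$ on $\partial\omega$, and then compare $u$ against $\|g\|_{L^p}$ times a multiple of $\xi$ (using the embedding $L^p(\omega)\hookrightarrow L^{N/2}$-type control to absorb $g$ into a constant via the local maximum principle Lemma \ref{BLMP} near the boundary). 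The main obstacle in either route is the boundary behaviour: one must pass from an interior/integral bound on $u$ to the \emph{pointwise linear decay} $u(x)\lesssim d(x,\partial\omega)$ up to the boundary, and it is precisely here that the $\mathcal{C}^{1,1}$ regularity of $\partial\omega$ and the hypothesis $p>N$ (ensuring $C^1$ regularity of $w$, equivalently a genuine Hopf-type linear bound) are used in an essential way; the interior part is comparatively soft.
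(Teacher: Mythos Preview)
Your first route is essentially the paper's own proof: compare the lower solution with the solution of the linear equation, invoke $W^{2,p}$ regularity (Gilbarg--Trudinger) together with the Sobolev embedding $W^{2,p}\hookrightarrow C^1$ for $p>N$, and read off the linear decay $|v(x)|\leq \|\nabla v\|_\infty\, d(x,\partial\omega)$ from $v|_{\partial\omega}=0$. The only difference is that the paper compares $u$ directly with the solution $v$ of $-\Delta v + a(x)v = g$, $v\in H_0^1(\omega)$, so no preliminary reduction to $u\geq 0$ and no dropping of the zero-order term is needed; your detour through $u^+$ (which does require a Kato-type truncation argument rather than a mere ``routine testing'') is correct but avoidable. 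Your second, more hands-on route via a torsion barrier and Lemma~\ref{BLMP} is not what the paper does and, as you yourself note, would take more work to make precise.
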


\begin{proof}
First of all, observe that it is enough to prove the result for $v$ solution of 
\begin{equation*}
\left\{
\begin{aligned}
-\Delta v + a(x)v & = g(x)\,, \quad & \textup{ in } \omega\,,\\
v & = 0\,, & \textup{ on } \partial \omega\,.
\end{aligned}
\right.
\end{equation*}
as, by the standard comparison principle it follows that $u \leq v$. 
Applying \cite[Theorem 9.15 and Lemma 9.17]{G_T_2001_S_Ed} we deduce that $v \in W_0^{2,p}(\omega)$ and there exists $C_1 = C_1(\omega,\|a\|_{\infty})> 0$ such that
\[ \|v\|_{W^{2,p}(\omega)} \leq C_1 \|g\|_{L^p(\omega)}\,.\]
Moreover, as $p > N$, by Sobolev's inequality, we have $C_2=C_2(\omega,\|a\|_{\infty})$ with
\[ \|v\|_{\mathcal{C}^{1}(\overline{\omega})} \leq C_2 \|g\|_{L^p(\omega)} \,,\]
and so, we easily deduce that
\[ v(x) \leq C_3 \|g\|_{L^p(\omega)}d(x,\partial \omega)\,, \quad \forall\ x \in \omega\,.\]
Hence, since $u \leq v$, the result follows from the above inequality.
\end{proof}

\begin{proof}[\textbf{Proof of Corollary \ref{BWHI}}]

Let  $w \geq 0$ be the solution of
\begin{equation}
\left\{
\begin{aligned}
-\Delta w + a(x)w & = b^-(x)\,, \quad & \textup{ in } \omega\,, \\
w & = 0\,, & \textup{ on } \partial \omega\,. 
\end{aligned}
\right.
\end{equation}
Observe that $v=u+w$ satisfies
\begin{equation}
\left\{
\begin{aligned}
-\Delta v + a(x)v &  \geq 0, \quad & \textup{ in } \omega\,, 
\\
v & \geq 0\,, & \textup{ on } \partial \omega\,. 
\end{aligned}
\right.
\end{equation}
Hence, by Theorem \ref{BWHIP}, there exist $\overline R>0$, 
$\epsilon = \epsilon(p, \overline{R},\|a\|_{\infty},\omega) > 0$ and 
$C = C(p,\overline{R}, \epsilon,\|a\|_{\infty},\omega) > 0$ such that, for all
 $R \in(0,\overline R]\,,$
\begin{equation}\label{last}
 \inf_{B_R(x_0) \cap \omega} \frac{v(x)}{d(x,\partial \omega)} 
\geq C \,\Big( \int_{B_R(x_0) \cap \omega} 
\Big( \frac{v(x)}{d(x,\partial \omega)} \Big)^{\epsilon} \,dx \Big)^{1/\epsilon}\,.
\end{equation}
On the other hand, by Lemma \ref{bcLemma2}, there exists $C_2 = C_2(\omega,\|a\|_{\infty}) > 0$ such that
\begin{equation}\label{bc7}
\sup_{\omega} \frac{w(x)}{d(x,\partial \omega)} \leq C_2 \|b^-\|_{L^p(\omega)}\,. 
\end{equation} 
From \eqref{last}, \eqref{bc7} and using that $u = v-w$, the corollary follows observing that $w\geq 0$ and hence $v\geq u$.
\end{proof}

\section{A priori bound} \label{III}

This section is devoted to the proof of  Theorem \ref{aPrioriBound}.
As a first step we observe that, to obtain our a priori upper  bound on the solutions of \eqref{Plambda}, we only need to control the solutions on 
$\Omega^+$. This can be proved under a weaker assumption than \eqref{A1}. More precisely, we assume
%Actually this result is a particular case of a more general property that we now present. \medskip
%
%Consider  the  boundary value problem
%\[ \label{Q} \tag{$Q$}
%-\Delta u = c(x) u + \mu(x) |\gradu|^2 + h(x)\,, \quad u \in \HLinfty\,,\]
%under the assumption
%\begin{enumerate}
%\item \label{1} $ c\,, h \in L^q(\Omega)$ for some $q > N/2$ and $\mu %\in L^{\infty}(\Omega)\,.$
%\item \label{2} $c^{+} \not\equiv 0\,.$
%\item \label{3} There exists a subdomain $\Omega_1 \subset \Omega$ with smooth boundary 
%$\partial \Omega_1$ of class $\mathcal{C}^{1,1}$ such that:
%\begin{itemize}
%\item $\supp(c^{+}) \subset \Omega_1\,.$
%\item $ c \geq 0\,,\ h \geq 0$ in $\Omega_1\,.$
%\item $0 < \mu_1 \leq \mu(x)$ in $\Omega_1\,.$
%\end{itemize}
%\end{enumerate}
\[ \label{01} \tag{$B$} 
\left\{
\begin{aligned}
&\Omega \subset \RN,\, N \geq 2, \textup{ is a bounded domain with boundary }\partial \Omega \textup{  of class }\mathcal{C}^{0,1}, 
\\
& c_+, \ c_-  \textup{ and } h \textup{ belong to } L^q(\Omega) \textup{ for some } q > N/2 \,,
\  \mu  \textup{ belong to } L^{\infty}(\Omega) \,, 
\\
& c_+(x) \geq 0, \ c_-(x) \geq 0 \textup{ and } c_-(x) c_+(x) =0 \textup{ a.e. in }\Omega,
\\
& |\Omega_{+}|> 0, \textup{ where } \Omega_{+} :=  \supp(c_{+}),
%\\
%& \textup{ there exists an open subset } \Omega_{1} \textup{ of } \Omega  
%%\textup{ with boundary } \partial \Omega_1 \textup{  of class }\mathcal{C}^{0,1},
% \textup{ such that } c \leq 0 \textup{ on } \Omega \backslash \overline\Omega_1.
%  \\
\end{aligned}
\right.
\] 
and we prove the next result.

\begin{lemma}
\label{Step 1}
 
Assume that \eqref{01} holds. Then, there exists  $M >0$ such that, for any $\lambda\in\mathbb R$,  any solution $u$  of \eqref{Plambda} satisfies
$$
-\sup_{\Omega_+} u^-  - M \, \leq \, u \,  \leq \sup_{\Omega_+}u^+  + M.
$$
\end{lemma}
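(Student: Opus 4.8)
The plan is to show that away from $\Omega_+$, i.e. on the open set $\Omega\setminus\Omega_+$ (together with a suitable neighbourhood of $\partial\Omega$), the solution $u$ is controlled by its values near $\Omega_+$ plus a universal constant, by comparison with a fixed function that does not depend on $\lambda$. The key observation is that on $\Omega\setminus\Omega_+$ we have $c_+\equiv 0$ a.e. (by definition of $\Omega_+=\supp(c_+)$), so on this set $u$ solves
\[
-\Delta u + c_-(x)\,u = \mu(x)|\nabla u|^2 + h(x),
\]
which is $\lambda$-independent. Here the only delicate point is that $\mu|\nabla u|^2+h$ is merely $L^1$-type data, so one first wants to absorb the gradient term; a standard device (as in Boccardo--Murat--Puel type arguments, already invoked in the references) is the substitution $v = \frac{1}{\mu_\infty}\big(e^{\mu_\infty u^+}-1\big)$ or a truncation thereof, which converts $-\Delta u \ge \mu|\nabla u|^2 - \|h^-\|$-type inequalities into clean inequalities for $v$ with an $L^q$ right-hand side, $q>N/2$. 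I would work with $w := (u - \kappa)^+$ where $\kappa := \sup_{\Omega_+} u^+$, extended by $0$ on $\Omega_+$, and show $w$ is a subsolution of a problem of the form $-\Delta w + c_- w \le (\text{change of variable})$ with $L^q$ data and homogeneous boundary condition on $\partial\Omega$, so that the $L^\infty$ bound on $w$ follows from the Local/Boundary Maximum Principle Lemmas \ref{LMP}, \ref{BLMP} together with a global $L^2$ (or $L^s$) estimate. The constant $M$ in the upper bound then comes entirely from $\|c_-\|_q$, $\|\mu\|_\infty$, $\|h\|_q$ and $\Omega$, hence is independent of $\lambda$ and of the particular solution.

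More concretely, the steps in order: (1) Fix a solution $u$ of \eqref{Plambda} and set $\kappa=\sup_{\Omega_+}u^+$ (finite since $u\in\mathcal C^{0,\tau}(\overline\Omega)$). (2) On $\Omega_0:=\Omega\setminus\Omega_+$ we have $c_+=0$, so $u$ satisfies $-\Delta u + c_- u = \mu|\nabla u|^2 + h$ there; perform the exponential change of variable on $(u-\kappa)^+$ to kill the gradient term and obtain a subsolution $z\ge 0$ of $-\Delta z + \tilde c\, z = \tilde h$ with $\tilde h\in L^q$, $q>N/2$, and $z=0$ on $\partial\Omega_+\cap\Omega$ (by construction $u\le\kappa$ on $\Omega_+$) and $z=0$ on $\partial\Omega$ (since $u\in H_0^1$). (3) Apply the Boundary Local Maximum Principle (Lemma \ref{BLMP}) near $\partial\Omega$ and the interior Local Maximum Principle (Lemma \ref{LMP}) in the interior, combined with a covering argument and a global $L^s$ a priori estimate on $z$ obtained by testing the equation, to conclude $\sup_{\Omega_0} z\le C(\|c_-\|_q,\|\mu\|_\infty,\|h\|_q,\Omega)$, hence $\sup_\Omega u\le \kappa+M$. (4) For the lower bound, run the symmetric argument on $-u$: on $\Omega_0$, $-u$ satisfies $-\Delta(-u) + c_-(-u) = -\mu|\nabla u|^2 - h \le -h$, so $(-u-\sup_{\Omega_+}u^-)^+$ is a nonnegative subsolution of a linear problem with $L^q$ data (no gradient term to absorb this time, since $-\mu|\nabla u|^2\le 0$ when $\mu\ge 0$; if $\mu$ changes sign one again uses the exponential substitution on $-u$ with $\|\mu\|_\infty$), yielding $-u\le \sup_{\Omega_+}u^- + M$.

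The main obstacle I expect is making the change of variables rigorous at the level of weak sub/supersolutions in $H_0^1\cap L^\infty$ with only $L^q$ ($q>N/2$) coefficients — in particular checking that $z=\frac{1}{\mu_\infty}(e^{\mu_\infty (u-\kappa)^+}-1)$ (or the analogous expression) lies in $H^1$, that its trace vanishes on the relevant parts of the boundary, and that the distributional inequality it satisfies has a right-hand side genuinely in $L^q$ so that Lemmas \ref{LMP}--\ref{BLMP} apply; a secondary technical point is the covering/iteration to pass from the local sup-estimates to a global one while keeping the constant independent of the solution, which requires the global $L^s$ bound and hence a separate (standard) energy estimate. Everything else — the $\lambda$-independence, the boundary behaviour — is then immediate since on $\Omega\setminus\Omega_+$ the parameter $\lambda$ has disappeared from the equation.
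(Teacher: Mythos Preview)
Your plan has a real gap in the step you yourself flag as ``the main obstacle'', and it is more serious than a matter of rigor. After the exponential substitution $z=\frac{1}{\mu_2}(e^{\mu_2(u-\kappa)^+}-1)$ with $\mu_2=\|\mu\|_\infty$, the inequality you obtain on $\Omega_0$ is (up to harmless nonpositive terms coming from $c_-$ and $h^-$)
\[
-\Delta z \le \mu_2 h^+(x)\,z + h^+(x),
\]
with $z\ge 0$ and $z=0$ on $\partial\Omega_0$. The zeroth-order coefficient here has the \emph{wrong sign}: it is $+\mu_2 h^+\ge 0$, so no maximum principle gives you $\sup z$ directly. You therefore need the $L^s$ bound on $z$ to feed into Lemmas~\ref{LMP}--\ref{BLMP}. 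But testing with $z$ gives $\int|\nabla z|^2\le \mu_2\int h^+ z^2+\int h^+ z$, and the quadratic term can only be absorbed into the left-hand side if $\mu_2\|h^+\|_{L^q}$ is small relative to the Poincar\'e/Sobolev constants of $\Omega_0$. Assumption \eqref{01} imposes no such smallness, so the energy estimate does not close and you have no $u$-independent $L^s$ bound on $z$. The same obstruction reappears for the lower bound once $\mu$ is allowed to change sign.

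The paper sidesteps this completely with a much shorter comparison argument. Fix any $\tilde\lambda$ for which $(P_{\tilde\lambda})$ has a solution $\tilde u$ (otherwise the statement is vacuous) and set $M=2\|\tilde u\|_\infty$. On $D=\Omega\setminus\overline{\Omega}_+$ the equation is $\lambda$-free, namely $-\Delta w=-c_-(x)w+\mu(x)|\nabla w|^2+h(x)$; the shifted function $v=u-\sup_{\partial D}u^+$ is a lower solution of this problem with $v\le 0$ on $\partial D$, while $\tilde v=\tilde u+\|\tilde u\|_\infty$ is an upper solution with $\tilde v\ge 0$ on $\partial D$. The comparison principle for such equations with nonpositive linear coefficient (\cite[Lemma~2.2]{A_DC_J_T_2014}) yields $v\le \tilde v$ in $D$, hence $u\le \sup_{\Omega_+}u^+ + M$ in $\Omega$; the lower bound is symmetric. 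No change of variable, no covering, no $L^s$ estimate is needed --- the whole point is to exploit one fixed solution as a barrier rather than to derive an $L^\infty$ bound from scratch.
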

\begin{remark}
Let us point out that if $c_{+} \equiv 0$, i.e. $|\Omega_{+}| = 0$, the problem \eqref{Plambda} reduces to
\begin{equation} \label{R41}
-\Delta u = -c_{-}(x) u + \mu(x) |\gradu|^2 + h(x), \quad u \in \HLinfty,
\end{equation}
which is independent of $\lambda$. If \eqref{R41} has a solution, by \cite[Proposition 4.1]{A_DC_J_T_2015} it is unique and so, we have an a priori bound.
\end{remark}
\begin{proof}
In case problem \eqref{Plambda} has no solution for any $\lambda\in \mathbb R$, there is nothing to prove. Hence, we assume  the  existence of  $\tilde\lambda\in\mathbb R$ such that $(P_{\tilde\lambda})$ has a solution  $\tilde{u}$.  We shall prove the result with  $M := 2 \|\tilde{u}\|_{\infty}$.
Let $u$ be an arbitrary solution of \eqref{Plambda}. 
\bigbreak

\noindent \textbf{Step 1:} \textit{$u  \leq \sup_{\Omega_+}u^++ M$.}
\medbreak

Setting $D:= \Omega \backslash \overline\Omega_+$ we define  
$v = \displaystyle  u - \sup_{\partial D} u^+$. We then obtain
\[ 
-\Delta v = -c_-(x)v + \mu(x)|\gradv|^2 + h(x) - c_-(x)  \sup_{\partial D}u^+
\leq -c_-(x)v + \mu(x)|\gradv|^2 + h(x)\,, \quad \textup{ in } D\,.
\]
As $v \leq 0$ on $\partial D$, the function
 $v$ is a lower solution of
\begin{equation}\label{pivot}
- \Delta z = -c_-(x)z + \mu(x)|\nabla z|^2 + h(x)\,, \qquad u \in H^1_0(D)\cap L^{\infty}(D). %Linfty,%\quad \textup{ in } D.
\end{equation}
%Now let $\tilde{u}$ be an arbitrary but fixed solution to $(Q)$. Clearly we can assume that such a solution do exists since otherwise there is nothing to prove. 
Setting
$ \tilde{v} = \tilde{u} + \|\tilde{u}\|_{\infty}$ we observe that
\[
 -\Delta \tilde{v} = -c_-(x) \tilde{v} + \mu(x)|\nabla \tilde{v}|^2 + h(x) + c_-(x) \|\tilde{u}\|_{\infty}
\geq  - c_-(x)\tilde{v}  + \mu(x)|\nabla \tilde{v}|^2 + h(x)\,, \quad \textup{ in } D\,,
\]
and thus, as $\tilde{v} \geq 0$ on $\partial D$, the function  $\tilde{v}$ is an upper solution of \eqref{pivot}.
By  \cite[Lemma 2.1]{A_DC_J_T_2014}, we know that $u$, $\tilde{u} \in H^1(\Omega) \cap W_{loc}^{1,N}(\Omega) \cap \mathcal{C}(\overline{\Omega})$ and hence,  $v$, $\tilde{v} \in H^1(D) \cap W_{loc}^{1,N}(D) \cap \mathcal{C}(\overline{D})$.
Applying \cite[Lemma 2.2]{A_DC_J_T_2014} we conclude that $v \leq \tilde{v}$ in $D$ namely, that
\[ 
  u - \sup_{\partial D} u^+ \leq \tilde{u} + \| \tilde{u}\|_{\infty} \,, \quad \textup{ in } D.
  \]
This gives that 
\[ 
  u \leq   \tilde{u} + \| \tilde{u}\|_{\infty} + \sup_{\partial D} u^+ \,, \quad \textup{ in } D,
  \]
and hence %finally, setting $M := 2 \|\tilde{u}\|_{\infty}$, we obtain that
\[ 
  u \leq   M +\sup_{\Omega_+} u^+  \,, \quad \textup{ in }  \Omega.
  \]
%\medbreak

\noindent \textbf{Step 2:} \textit{$u \geq -\sup_{\Omega_+} u^- - M$.}
\medbreak

%  
%We shall now prove that
%$$ 
%u \geq -\sup_{\Omega_1} u^- - M.
%$$ 
We now define $v =  \displaystyle u + \sup_{\partial D} u^-$ and obtain $v \geq 0$ on $\partial D$ as well as
\[
 -\Delta v = - c_-(x)v + \mu(x)|\gradv|^2 + h(x) + c_-(x) \sup_{\partial D} u^-
\geq - c_-(x)v + \mu(x)|\gradv|^2 + h(x)\,, \quad \textup{ in } D\,.
\]
Thus $v$ is an upper solution of \eqref{pivot}. Now defining $ \tilde{v} = \tilde{u} - \|\tilde{u}\|_{\infty}$, again, we have $\tilde{v} \leq 0$ on $\partial D$ as well as
\[ 
-\Delta \tilde{v} = -c_-(x) \tilde{v} + \mu(x)|\nabla \tilde{v}|^2 + h(x) - c_-(x) \|\tilde{u}\|_{\infty}
\leq - c_-(x)\tilde{v}  + \mu(x)|\nabla \tilde{v}|^2 + h(x)\,, \quad \textup{ in } D\,.
\]
Thus $\tilde{v}$ is a lower solution of \eqref{pivot}. As previously we have that 
$v$,  $\tilde{v} \in H^1(D) \cap W_{loc}^{1,N}(D) \cap \mathcal{C}(\overline{D})$
 and applying \cite[Lemma 2.2]{A_DC_J_T_2014} we obtain that $ \tilde{v} \leq v$ in $D$. Namely
\[ 
  \tilde{u} -  \|\tilde{u}\|_{\infty}  \leq u + \sup_{\partial D} u^-\,, \quad \textup{ in } D.\]
Thus 
\[ 
 u \geq  \tilde{u} -  \|\tilde{u}\|_{\infty}   -\sup_{\partial D} u^-\,, \quad \textup{ in } D,\]
 and without restriction we get that
 \[ 
  u \geq -\sup_{\Omega_+} u^- -M   \,, \quad \textup{ in }  \Omega,
  \]
 ending the proof.
 \end{proof}
 
Now, let $u \in \HLinfty$ be a solution of \eqref{Plambda}. 
Following \cite[Proposition 6.1]{A_DC_J_T_2015},  we introduce
\begin{equation}
\label{def w}
 w_i(x) = \frac{1}{\mu_i} \big(e^{\mu_i u(x)} - 1 \big) \quad \textup{ and } \quad g_i(s) 
= \frac{1}{\mu_i} \ln(1+\mu_i s), \qquad i = 1,2\,,
\end{equation} 
where $\mu_1$ is given in \eqref{A1} and $\mu_2 = \esssup \mu(x)$. Observe that 
\[ u = g_i(w_i) \quad \textup{ and } \quad 1+ \mu_i w_i = e^{\mu_i u} ,\qquad i = 1,2\,,\]
and that, by standard computations,
\begin{equation} \label{idwi}
-\Delta w_i = (1+\mu_i w_i)\big[(\lambda c_{+}(x)-c_{-}(x))g_i(w_i) + h(x)\big] 
+ e^{\mu_i u} |\gradu|^2 (\mu(x)-\mu_i).
\end{equation}
%For future reference note also that $\min_{[-1/\mu_i,+\infty[} (1+\mu_i s) g_i(s)=-\frac{e^{-1}}{\mu_i}$ .

Using \eqref{idwi} we shall obtain a uniform a priori upper bound on $u$ in a neighborhood of any fixed point 
$\overline{x} \in \overline{\Omega}_{+}$. We consider the two cases $\overline{x} \in \overline{\Omega}_{+} \cap \Omega$ and $\overline{x} \in \overline{\Omega}_{+} \cap \partial \Omega$ separately.

\begin{lemma} \label{Steps A}
Assume that \eqref{A1} holds and that $\overline{x} \in \overline{\Omega}_{+} \cap \Omega$. 
For each $\Lambda_2 > \Lambda_1 > 0$, there exist $M_I > 0$ and $R > 0$ such that, for any 
$\lambda \in [\Lambda_1,\Lambda_2]$, any solution $u$ of \eqref{Plambda} satisfies 
$\sup_{B_R(\overline{x})}u  \leq M_I$.

\end{lemma}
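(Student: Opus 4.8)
The idea is a local interior analysis around $\overline{x}$. Work with $w := w_1$ as in \eqref{def w}, with the constant $\mu_1$ from \eqref{A1}: since $\overline{x}\in\overline{\Omega}_+\cap\Omega$, the hypothesis $c_-=0$ and $\mu\geq\mu_1$ hold on a full neighborhood $\{d(x,\Omega_+)<\epsilon\}$ of $\overline{x}$, so we may pick $R>0$ with $B_{4R}(\overline x)\subset\Omega$ and $B_{4R}(\overline x)\subset\{d(x,\Omega_+)<\epsilon\}$. On this ball, \eqref{idwi} with $i=1$ reads
\[
-\Delta w = (1+\mu_1 w)\big[\lambda c_+(x)\,g_1(w) + h(x)\big] + e^{\mu_1 u}|\nabla u|^2(\mu(x)-\mu_1),
\]
and the last term is $\geq 0$ because $\mu(x)\geq\mu_1$ there, while $w\geq 0$. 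So $w$ is a \emph{non-negative upper solution} of $-\Delta w + a(x)w = b(x)$ for a suitable choice: put $b(x)=(1+\mu_1 w)h(x) \geq -(1+\mu_1\|w\|_{\text{loc}})h^-(x)$ and absorb the term $\lambda c_+(x)g_1(w)=\lambda c_+(x)\tfrac{1}{\mu_1}\ln(1+\mu_1 w)$ partly into the zero-order coefficient. The point is that $g_1(w)\leq \tfrac{1}{\mu_1}\mu_1 w = w$, and also $g_1(w)=\frac{1}{\mu_1}\ln(1+\mu_1w)\leq\frac{C}{\mu_1}(1+\mu_1 w)^{\theta}$ for any small $\theta$... but cleaner: since $c_+\geq 0$ and $g_1\geq 0$, the term $\lambda c_+ g_1(w)(1+\mu_1 w)\geq 0$, which only helps the upper-solution inequality. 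So in fact, dropping the nonnegative terms, $w$ satisfies $-\Delta w \geq (1+\mu_1 w)h(x)\geq -(1+\mu_1\|w\|_{L^\infty(B_{4R})})h^-(x)$ pointwise, hence $-\Delta w + 0\cdot w = b(x)$ with $b^- \in L^q$, $q>N/2$.

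\textbf{Key steps.} First, combine Lemma \ref{Step 1}: it gives $u\leq \sup_{\Omega_+}u^+ + M$, so it suffices to bound $\sup_{\Omega_+}u^+$, and by compactness of $\overline\Omega_+$ to bound $\sup_{B_R(\overline x)}u^+$ for each $\overline x$. Next, apply the Weak Harnack Inequality, Lemma \ref{WHI}, to the non-negative upper solution $w$ on $B_{4R}(\overline x)$: for $1\leq s<\frac{N}{N-2}$,
\[
\inf_{B_R(\overline x)}w \;\geq\; C\Big[\Big(\int_{B_{2R}(\overline x)}w^s\,dx\Big)^{1/s} - \|b^-\|_{L^r(B_{4R}(\overline x))}\Big].
\]
The catch is that $b^-$ depends on $\|w\|_{L^\infty}$, which is what we are trying to control; this is the usual obstruction. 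The way around it is the standard bootstrap: one plays the $\inf$ estimate against the Local Maximum Principle, Lemma \ref{LMP}, which bounds $\sup_{B_R}w^+$ by $\big(\int_{B_{2R}}(w^+)^s\big)^{1/s}+\|b^+\|_{L^r}$, and note the crucial \emph{sign} interaction: the lower-order term $\lambda c_+ g_1(w)$ that we get to keep (it is $\geq 0$ and in the equation it has the good sign on $\Omega_+$) forces the integral $\int w^s$ to be controlled by the $\inf$ — this is precisely where $c_+\geq\nu>0$ near $\overline x$ (on $\Omega_+$, $c_+$ is bounded below on the support in an averaged sense) enters. The mechanism: integrate the equation against a cutoff; the term $\lambda\int c_+ g_1(w)(1+\mu_1 w)\varphi$ is bounded above by $C(1+\inf_{B_R}w)$ via the reverse-Hölder structure, while it is bounded below by something growing in $\|w\|$, which is impossible unless $\|w\|_{L^\infty(B_R)}$ is bounded a priori — contradiction argument.

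\textbf{The main obstacle} is exactly the circular dependence of the coefficient $b^-$ (and the zero-order term) on $\|w\|_{L^\infty}$ together with making rigorous that the good-sign term $\lambda c_+(x)g_1(w)$ provides genuine gain. I expect the authors run a \emph{contradiction/blow-up} scheme: suppose there is a sequence $\lambda_n\in[\Lambda_1,\Lambda_2]$ and solutions $u_n$ with $\sup_{B_R(\overline x)}u_n\to\infty$; normalize, use the weak Harnack inequality to propagate the blow-up from $\overline x$ to a whole neighborhood where $c_+\geq\nu>0$, then integrate the equation for $w_n$ against a fixed test function to get $\lambda_n\int_{B_R}c_+(x)g_1(w_n)(1+\mu_1 w_n)\,dx \leq C + C\|w_n\|_{L^\infty(B_R)}$, while the left side, because $g_1(s)(1+\mu_1 s)\sim \frac{s}{\mu_1}\ln(\mu_1 s)$ is superlinear and $w_n$ is large on a set of positive measure (by weak Harnack), grows faster than $\|w_n\|_{L^\infty}$ — contradiction. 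The logarithmic nonlinearity $g_1$ is weak, so some care (an extra iteration or a sharper Harnack with the $\epsilon$-exponent) may be needed to close the gap; that delicate balancing between the logarithmic gain and the linear loss is the technical heart of the argument.
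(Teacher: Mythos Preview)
Your broad strategy---use the exponential substitution $w_1$, exploit the good-sign term $\lambda c_+ g_1(w_1)(1+\mu_1 w_1)$ through a weak Harnack inequality to control an $L^s$ norm, then close with the Local Maximum Principle---is the paper's strategy. But several steps are genuinely off, and the paper is direct, not by contradiction.

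\textbf{The positivity and the ``circular $b^-$'' issue.} Your claim $w\geq 0$ is false: $w_1=\frac{1}{\mu_1}(e^{\mu_1 u}-1)$ is only $>-1/\mu_1$. The paper fixes this and, at the same time, eliminates the circular dependence you flag as the main obstacle, by a shift. Let $z_2\in H^1_0(B_{4R})$ solve $-\Delta z_2+\mu_1 h^- z_2=-\Lambda_2 c_+\,e^{-1}/\mu_1$ (note $\min_s(1+\mu_1 s)g_1(s)=-e^{-1}/\mu_1$), so $-D\leq z_2\leq 0$, and set $v_1:=w_1-z_2+1/\mu_1>0$. Then
\[
-\Delta v_1+\mu_1 h^-(x)\,v_1\;\geq\;\Lambda_1\,c_+(x)(1+\mu_1 w_1)\,g_1(w_1)^+\;\geq\;0,
\]
a supersolution inequality with \emph{nonnegative} right side. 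There is no $b^-$ left to worry about.

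\textbf{How the $\inf$ is bounded.} This is not done by integrating against a cutoff, nor by blow-up, but via the Brezis--Cabr\'e type Lemma~\ref{bcLemma1} applied to $v_1$ on $B_{4R}$: it yields
\[
k:=\inf_{B_R}v_1\;\geq\;C\int_{B_R}c_+(x)(1+\mu_1 w_1)g_1(w_1)^+\,dx.
\]
Since $1+\mu_1 w_1=\mu_1 v_1+\mu_1 z_2\geq \mu_1 k-\mu_1 D$ on $B_R$, this gives $k\geq C(\mu_1 k-\mu_1 D)\ln(\mu_1 k-\mu_1 D)\,\|c_+\|_{L^1(B_R)}$, and comparing growth forces $k\leq C_1$ directly. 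Then Lemma~\ref{WHI} gives $\|1+\mu_1 w_1\|_{L^s(B_{2R})}\leq C_2$ for $1\leq s<N/(N-2)$.

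\textbf{The sharpest gap.} Lemma~\ref{LMP} applies to \emph{lower} solutions, while $w_1$ (built with $\mu_1\leq\mu$) is a \emph{super}solution on $B_{4R}$; you cannot feed $w_1$ to the Local Maximum Principle. The paper therefore switches to $w_2$, built with $\mu_2:=\operatorname{ess\,sup}\mu$: from \eqref{idwi} with $i=2$ one gets $-\Delta w_2\leq a(x)w_2+b(x)$ with $a,b$ involving $c_+(1+\mu_2 w_2)^\alpha$ for a small $\alpha>0$. The $L^s$ bound on $1+\mu_1 w_1$ transfers to $1+\mu_2 w_2$ via the identity $(1+\mu_2 w_2)^{\mu_1/\mu_2}=1+\mu_1 w_1$, so by H\"older $a,b\in L^r(B_{2R})$ for some $r>N/2$ with uniform norms; Lemma~\ref{LMP} then bounds $\sup_{B_R}w_2^+$, hence $\sup_{B_R}u$. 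Without this second substitution the argument does not close.
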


\begin{proof}
Under the assumption \eqref{A1} we can find a $R > 0$ such that 
$ \mu(x) \geq \mu_1 > 0$, $ c_{-} \equiv 0$ in $B_{4R}(\overline{x}) \subset \Omega$
and $c_{+} \gneqq 0$ in $B_R(\overline{x})$. 
For simplicity, in this proof, we denote $B_{mR}=B_{mR}(\overline{x})$, for $m\in \mathbb N$.
\medbreak
Since $c_{-} \equiv 0$ and $\mu(x)\geq\mu_1$ in $B_{4R}$, 
observe that \eqref{idwi}
reduces to
\begin{equation}
\label{equl*} 
 -\Delta w_1 + \mu_1 h^{-}(x) w_1  \geq \lambda (1+ \mu_1 w_1) c_{+}(x)g_1(w_1) +h^+(x)(1+\mu_1 w_1)
 - h^{-}(x)\,, \quad  \textup{ in }  B_{4R}.
\end{equation}
\noindent Let $z_2$ be the solution of
\begin{equation}
\label{z2} 
%\left\{
%\begin{aligned}
 -\Delta z_2 + \mu_1 h^{-}(x) z_2  = -\Lambda_2 c_{+}(x)\frac{e^{-1}}{\mu_1} \,, \qquad z_2  \in H^1_0(B_{4R}).
 %z_2  & = 0\,, \quad & \textup{ on } \partial B_{2R}\,.
%\end{aligned}
%\right.
\end{equation}
By classical regularity arguments (see for instance \cite[Theorem III-14.1]{L_U_1968}),
$z_2 \in \mathcal{C}(\overline{B_{4R}})$. Hence, there exists 
$D=D(\overline{x},\mu_1, \Lambda_2, \|h^-\|_{L^q(B_{4R})}, \|c_+\|_{L^q(B_{4R})}, q, R) > 0$ 
such that 
\begin{equation}
\label{z2borne}
z_2\geq -D \textup{ in }B_{4R}.
\end{equation} 
Moreover, by the weak maximum principle \cite[Theorem 8.1]{G_T_2001_S_Ed}, we have that $z_2 \leq 0$. Now defining $v_1=w_1-z_2+\frac{1}{\mu_1}$, and since  $\min_{[-1/\mu_i,+\infty[} (1+\mu_i s) g_i(s)=-\frac{e^{-1}}{\mu_i}$, we observe that $v_1$ satisfies
\begin{equation}
\label{v_11} 
 -\Delta v_1 + \mu_1 h^{-}(x) v_1  \geq \Lambda_1 c_{+}(x) (1+ \mu_1 w_1) g_1(w_1)^+\,, 
 \quad  \textup{ in } B_{4R}.
\end{equation}
Also, since $w_1>-1/\mu_1$, we have $v_1>0$ in  $\overline{B_{4R}}$.
%since $v_1 >0$ on $\partial B_{4R}$, by the weak maximum principle we deduce that $v_1 \geq 0$ on $\overline{B_{4R}}.$ 
Note also that $ 0 < 1+ \mu_1 w_1 = \mu_1 v_1 + \mu_1 z_2$ in  $\overline{B_{4R}}$. Now, we split the rest of the proof into four steps.
\medbreak

\noindent \textbf{Step 1:} \textit{There exist 
$C_{1} = C_{1}(\overline{x}, \Lambda_1, \Lambda_2, R, \mu_1, q, \|h^{-}\|_{L^{\infty}(B_{4R})}, 
%\|c_+\|_{L^1(B_R)}, 
\|c_+\|_{L^q(B_{4R})}) > 0 $ such that}
\begin{equation}\label{eq-step1}
  k:= \inf_{B_R}  v_1(x) \leq C_1.
\end{equation}

In case $  \mu_1 \inf_{B_R}  v_1(x)  \leq 1+\mu_1 D$, where $D$ is given by \eqref{z2borne}, the Step 1 is proved. Hence, we assume that
\begin{equation}
\label{cas2}
\mu_1 v_1(x)  \geq 1+\mu_1 D, \qquad \forall\ x\in B_R.
\end{equation}
%Observe that
%$$ k =\inf_{B_R}  v_1(x) \geq \inf_{B_{2R}}  v_1(x).
%$$
In particular, $\mu_1 v_1 + \mu_1 z_2\geq 1$ on $B_R$. Now, by Lemma \ref{bcLemma1} applied on \eqref{v_11} with $\omega = B_{4R}$, there exists $C = C(R, 
%q, 
\|h^{-}\|_{L^{\infty}(B_{4R})},\mu_1, \Lambda_1, \overline x) > 0$ such that,
\begin{equation*}
\begin{aligned}
%\mu_1 \inf_{B_R} \frac{v_1(y)}{d(y,\partial B_{2R})}
k
&\geq C \int_{B_R}  c_{+}(y)\, \Big(\mu_1 v_1(y) + \mu_1 z_2(y)\Big) \ln \Big(\mu_1 v_1(y) + \mu_1 z_2(y)\Big)\,dy 
\\
&\geq C \int_{B_R}  c_{+}(y)\,   (\mu_1 k-\mu_1 D) 
\ln\big(\mu_1 k-\mu_1 D \big) \,dy \\
&  =C (\mu_1 k-\mu_1 D) \ln\big(\mu_1 k-\mu_1 D \big)\|c_{+}\|_{L^1(B_R)}.
\end{aligned}
\end{equation*}
 As $c_{+} \gneqq 0$ in $B_R$,
 % and in view of \eqref{cas2}, 
 comparing the growth in $k$ of the various terms, we deduce that $k$ must remain bounded and thus the existence of 
 $C_{1} = (\overline{x}, \Lambda_1, \Lambda_2, R, \mu_1, q, \|h^{-}\|_{L^{\infty}(B_{4R})}, 
 %\|c_+\|_{L^1(B_R)}, 
 \|c_+\|_{L^q(B_{4R})}) > 0 $
such that \eqref{eq-step1} holds.
\medbreak

\noindent \textbf{Step 2:}\,\,\textit{For any $1 \leq s < \frac{N}{N-2}$, there exists 
 $C_{2} = C_{2}(\overline{x}, \mu_1, R, s, \Lambda_1, \Lambda_2, q, \|h^{-}\|_{L^{\infty}(B_{4R})},\, \|c_+\|_{L^q(B_{4R})}
 % \|c_+\|_{L^1(B_R)}
 ) > 0 $ such that 
%{\color{red}Verifier la dependence de la constante} 
\[ 
\int_{B_{2R}} (1+ \mu_1 w_1)^s \,dx\leq C_{2}.
\]}

Applying Lemma \ref{WHI} to \eqref{v_11}, we deduce the existence of 
$C = C(s,\mu_1, R,\|h^{-}\|_{L^q(B_{4R})}) > 0$ such that 
\[ 
\Big( \int_{B_{2R}} v_1^s \,dx \Big)^{1/s} \leq C \inf_{B_R} v_1 \,.
\]
The Step 2 follows from Step 1 observing that 
$ 0 \leq 1+ \mu_1 w_1 = \mu_1 v_1 + \mu_1 z_2 \leq \mu_1 v_1.$ 

%Hence, we can conclude that there exists $C_{2} = C_{2}(C_{1},\Omega,s) > 0$ such that
%\[ \left( \int_{B_{2R}} w_1^s dx \right)^{1/s} \leq C_{2}\,.\]
\medbreak

\noindent \textbf{Step 3:}\,\,\textit{For any $1 \leq s < \frac{N}{N-2}$, we have, for the constant $C_2 >0$ introduced in Step 2, that}
\[ \int_{B_{2R}} \big(1+\mu_2 w_2 \big)^{\frac{\mu_1 s}{\mu_2}} dx \leq 
C_{2}.\]
\indent This directly follows from Step 2 since, by the definition of $w_i$, we have 
\[
(1+\mu_2 w_2)^{\frac{\mu_1}{\mu_2}}=(e^{\mu_2 u})^{\frac{\mu_1}{\mu_2}} 
= e^{\mu_1 u}= (1+\mu_1 w_1).
\]

%Now, observe that Step 2 shows that there exists $\overline{C}_{2}> 0$ such that
%\[ \left( \int_{B_{2R}}(1+\mu_1 w_1)^{s} dx \right)^{\frac{\tau-1}{\tau}} \leq \overline{C}_{2}\,.\]
%Hence, it follows that
%\[ \int_{B_{2R}} \left( c_{+}(x) (1+\mu_2 w_2 )^{\alpha} \right)^r dx 
%\leq \overline{C}_{2} \|c_{+}\|_{L^q(B_{2R})}^r\,.\]
%\medbreak

\noindent \textbf{Step 4:}\textit{ Conclusion.} 
\medbreak

We will show the existence of  $C_{3} = C_3 (\overline{x},\mu_1, \mu_2, R,  \Lambda_1, \Lambda_2,q,\|h^{-}\|_{L^{\infty}(B_{4R})}, \|c_+\|_{L^q(B_{4R})}
%, \|c_+\|_{L^1(B_R)}
) > 0 $ such that 
%{\color{red} Verifier la dependence de la constante}
\[ \sup_{B_R} w_2 \leq C_{3}\,.\]
Thus, thanks to the definition of $w_2$, we can conclude the proof. Let us fix $s\in [1, \frac{N}{N-2})$,  
$r\in(\frac{N}{2}, q)$ 
and $\alpha = \frac{(q-r)\mu_1 s}{\mu_2 q r}$ and let $c_{\alpha}>0$ such that  
\[ \ln(1+x) \leq (1+x)^{\alpha} + c_{\alpha}, \quad \forall\ x \geq 0.\]
We introduce the auxiliary functions
\[ 
\begin{array}{c}
a(x) = \Lambda_2 c_{+}(x)(1+\mu_2w_2)^{\alpha} + c_{\alpha} \Lambda_2  c_{+}(x) + \mu_2 h^+(x), \vspace{0.225cm} \\
%\quad  \textup{and} \quad 
\displaystyle
b(x) = \frac{\Lambda_2}{\mu_2}c_{+}(x) (1+\mu_2 w_2)^{\alpha} 
+ c_{\alpha} \frac{\Lambda_2}{\mu_2} c_{+}(x) + h^+(x)+ c_-(x) \frac{e^{-1}}{\mu_2},
\end{array}
\]
and, as $\mu(x)\leq \mu_2$, we deduce from \eqref{idwi} that $w_2$ satisfies
\begin{equation*}
\left\{
\begin{aligned}
-\Delta w_2 & \leq a(x) w_2 + b(x)\, \quad & \textup{ in } \Omega\,, \\
w_2 & = 0 & \textup{ on } \partial\Omega.
\end{aligned}
\right.
\end{equation*}
Now, as $q/r > 1$, by Step 3 and H\"older inequality, it follows that 
\[ 
\begin{aligned}
\int_{B_{2R}} ( c_{+}(x) (1+\mu_2 w_2 )^{\alpha})^r dx 
&\leq 
\|c_{+}\|_{L^q(B_{2R})}^r  
\Big( \int_{B_{2R}}(1+\mu_2 w_2)^{\frac{\alpha q r}{q-r}} dx \Big)^{\frac{q-r}{q}}
\\
& \leq \|c_{+}\|_{L^q(B_{2R})}^r 
\Big( \int_{B_{2R}}(1+\mu_2 w_2)^{\frac{\mu_1 s}{\mu_2}} dx \Big)^{\frac{q-r}{q}} 
\leq C_2^{\frac{q-r}{q}} \|c_{+}\|_{L^q(B_{2R})}^r.
\end{aligned}
\]
Hence, there exists $D (\overline{x},\mu_1, \mu_2, s, R, \Lambda_1, \Lambda_2,q,\|h^{-}\|_{L^{\infty}(B_{4R})}, \|c_+\|_{L^q(B_{4R})}, 
%\|c_+\|_{L^1(B_R)}
r, \|h^+\|_{L^q(B_{2R})}) > 0 $  such that 
%{\color{red}Verifier la dependence de la constante}
\begin{equation} \label{ab6}
\max \{ \, \|a\|_{L^r(B_{2R})}, \|b\|_{L^r(B_{2R})} \} \leq D\,.
\end{equation}
Applying then Lemma \ref{LMP}, there exists $C (\overline{x},\mu_1, \mu_2, s, R, \Lambda_1, \Lambda_2,q,\|h^{-}\|_{L^q(B_{4R})},
 \|c_+\|_{L^q(B_{4R})}
 %, \|c_+\|_{L^1(B_R)}
 ) > 0 $ 
%{\color{red}Verifier la dependence de la constante} 
such that
\[ \sup_{B_R} w_2^+ \leq 
C \Big[ \Big( \int_{B_{2R}} (w_2^+)^{ \frac{\mu_1}{\mu_2} s } dx \Big)^{ \frac{\mu_2}{\mu_1 s}} 
+ \|b\|_{L^r(B_{2R})} \Big] \leq C \Big[ \Big( \int_{B_{2R}} (w_2^+)^{ \frac{\mu_1}{\mu_2} s } dx \Big)^{ \frac{\mu_2}{\mu_1 s}} 
+ D \Big] \,.\]
On the other hand, by Step 3, we get
\[  \int_{B_{2R}} (w_2^+)^{ \frac{\mu_1}{\mu_2} s } dx 
\leq C(\mu_1, \mu_2, s) \int_{B_{2R}} ( 1+\mu_2 w_2 )^{ \frac{\mu_1}{\mu_2} s } dx \leq  C(\mu_1, \mu_2, s)\,C_2\,,\]
%On the other hand, using \eqref{ab6}, we deduce that $C$ depends actually of 
%$C_{2},\Lambda_2, \mu_2, \|c^{+}\|_q$ and $ \|h\|_q $. Gathering together all this information, 
%we conclude that there exists $C_{3} = C_{3} (C_{2}, \mu_2, \Lambda_2, \|c^{+}\|_q,\|h\|_q) > 0$ such that
%\[ \sup_{B_R} w_2 \leq C_{3}\,.\] 
and the result follows.
\end{proof}

\begin{lemma} \label{Steps B}
Assume that \eqref{A1} holds and that $\overline{x} \in \overline{\Omega}_{+} \cap \partial \Omega$. For each $\Lambda_2 > \Lambda_1 > 0$, there exist $R > 0$ and $M_B > 0$  such that, 
for any $\lambda \in [\Lambda_1,\Lambda_2]$, any solution of \eqref{Plambda} satisfies 
$\sup_{B_R(\overline{x}) \cap \Omega}u \leq M_B\,.$
\end{lemma}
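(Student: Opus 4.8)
The plan is to mirror the interior case (Lemma \ref{Steps A}) but replace the interior Weak Harnack Inequality and Brezis--Cabr\'e lemma by their boundary counterparts, namely Corollary \ref{BWHI} (which rests on Theorem \ref{BWHIP}), together with the Boundary Local Maximum Principle (Lemma \ref{BLMP}). First I would use assumption \eqref{A1} to pick $R>0$ so small that $\mu(x)\geq \mu_1>0$ and $c_-\equiv 0$ on $B_{4R}(\overline x)\cap\Omega$, while $c_+\gneqq 0$ on $B_R(\overline x)\cap\Omega$; such $R$ exists because $\overline x\in\overline\Omega_+$ and $c_-$ vanishes in a neighbourhood of $\Omega_+$. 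With the change of unknowns $w_i=\frac1{\mu_i}(e^{\mu_i u}-1)$ of \eqref{def w}, identity \eqref{idwi} again degenerates, on $B_{4R}(\overline x)\cap\Omega$, to
\[
 -\Delta w_1 + \mu_1 h^-(x) w_1 \geq \lambda (1+\mu_1 w_1) c_+(x) g_1(w_1) + h^+(x)(1+\mu_1 w_1) - h^-(x).
\]
As in Step~0 of Lemma \ref{Steps A}, I would absorb the possibly negative term $(1+\mu_1 w_1)g_1(w_1)$ (bounded below by $-e^{-1}/\mu_1$) by introducing an auxiliary $z_2$ solving $-\Delta z_2+\mu_1 h^- z_2=-\Lambda_2 c_+ e^{-1}/\mu_1$ in $H^1_0(B_{4R}(\overline x)\cap\Omega)$, which by elliptic regularity is continuous and bounded below by some $-D$, and is $\leq 0$; then $v_1:=w_1-z_2+\frac1{\mu_1}>0$ is a non-negative upper solution of $-\Delta v_1+\mu_1 h^- v_1 \geq \Lambda_1 c_+ (1+\mu_1 w_1) g_1(w_1)^+$ with $1+\mu_1 w_1=\mu_1 v_1+\mu_1 z_2\geq 0$.

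The boundary version of Step~1 is the crux: I want a bound $\inf_{B_R(\overline x)\cap\Omega} v_1(x)/d(x,\partial\Omega)\leq C_1$. If this quantity is already $\leq (1+\mu_1 D)/\mu_1$ times a geometric constant we are done; otherwise $\mu_1 v_1 + \mu_1 z_2 \geq 1$ on $B_{2R}(\overline x)\cap\Omega$ near the part where $c_+$ lives, so the right-hand side of the inequality for $v_1$ dominates $\Lambda_1 c_+(x)(\mu_1 v_1+\mu_1 z_2)\ln(\mu_1 v_1 + \mu_1 z_2)$, and I apply the boundary Brezis--Cabr\'e estimate (Lemma \ref{bcLemma1}, in its boundary formulation, or rather the mechanism behind Corollary \ref{BWHI}) to get
\[
 \inf_{B_R(\overline x)\cap\Omega} \frac{v_1(x)}{d(x,\partial\Omega)} \geq C \int_{B_R(\overline x)\cap\Omega} c_+(y)\,\big(\mu_1 v_1(y)+\mu_1 z_2(y)\big)\ln\big(\mu_1 v_1(y)+\mu_1 z_2(y)\big)\,dy,
\]
and comparing superlinear versus linear growth in $k:=\inf v_1/d(\cdot,\partial\Omega)$, using $c_+\gneqq 0$ on $B_R(\overline x)\cap\Omega$, forces $k$ to stay bounded. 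Next, in place of the interior Weak Harnack (Lemma \ref{WHI}) I invoke Corollary \ref{BWHI} on the upper solution $v_1$: there are $\overline R>0$, $\epsilon>0$ such that for $R\in(0,\overline R]$,
\[
 \Big( \int_{B_{2R}(\overline x)\cap\Omega} \Big(\frac{v_1(x)}{d(x,\partial\Omega)}\Big)^{\epsilon} dx \Big)^{1/\epsilon} \leq C\Big( \inf_{B_R(\overline x)\cap\Omega} \frac{v_1}{d(\cdot,\partial\Omega)} + \|b^-\|_{L^p}\Big) \leq C(C_1+1),
\]
so $\int (v_1/d(\cdot,\partial\Omega))^\epsilon$, and hence $\int (1+\mu_1 w_1)^\epsilon (\text{weighted})\,$, is bounded; since near $\overline x$ the domain is Lipschitz, $d(x,\partial\Omega)$ is comparable to a coordinate and the weight is harmless away from the boundary, giving control of $\int_{B_{2R}(\overline x)\cap\Omega}(1+\mu_1 w_1)^\epsilon\,dx$.

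With an $L^\epsilon$-bound on $1+\mu_1 w_1$ in hand, the remaining steps transcribe Steps 3--4 of Lemma \ref{Steps A} verbatim, up to replacing the interior Local Maximum Principle by the Boundary Local Maximum Principle (Lemma \ref{BLMP}) applied to $w_2$ on half-balls $B_R(\overline x)\cap\Omega$: using $(1+\mu_2 w_2)^{\mu_1/\mu_2}=1+\mu_1 w_1$ we transfer the integral bound to $w_2$, bootstrap the coefficients $a,b$ built from $c_+(1+\mu_2 w_2)^\alpha$ into $L^r$ with $N/2<r<q$ via H\"older (exactly as in \eqref{ab6}), and conclude $\sup_{B_R(\overline x)\cap\Omega} w_2\leq M_B$, hence $\sup_{B_R(\overline x)\cap\Omega} u\leq M_B$. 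The main obstacle is genuinely the boundary Step~1/Step~2 pair: one must be careful that the boundary weak Harnack and boundary Brezis--Cabr\'e estimates are applied in the right flattened coordinates near $\overline x\in\partial\Omega$, that the constants are uniform in $\lambda\in[\Lambda_1,\Lambda_2]$ (they are, since only $\Lambda_1,\Lambda_2$ and the fixed data enter), and that the weight $d(x,\partial\Omega)$ does not destroy the $L^\epsilon$-integrability of $1+\mu_1 w_1$ — this last point is where the $\mathcal C^{1,1}$ (or even $\mathcal C^{0,1}$) regularity of $\partial\Omega$ and the precise form of Theorem \ref{BWHIP} are essential.
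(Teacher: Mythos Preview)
Your overall strategy is correct and matches the paper: transform to $w_1$, shift by an auxiliary $z_2$ to get a non-negative upper solution $v_1$, control $\inf v_1/d(\cdot,\partial\Omega)$ first, then use the boundary weak Harnack inequality to bound an $L^\epsilon$-norm, and finish with the Boundary Local Maximum Principle on $w_2$. However, two technical points in your execution are genuine gaps, and the paper handles them differently.

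First, you work on $B_{4R}(\overline x)\cap\Omega$ throughout and solve for $z_2$ there. This set need not have $\mathcal C^{1,1}$ boundary (corners where $\partial B_{4R}$ meets $\partial\Omega$), yet both Lemma~\ref{bcLemma1} and Theorem~\ref{BWHIP} require a $\mathcal C^{1,1}$ domain. The paper avoids this by fixing an auxiliary $\Omega_1\subset\Omega$ with $\partial\Omega_1$ of class $\mathcal C^{1,1}$, such that $B_{2R}(\overline x)\cap\Omega\subset\Omega_1$ and $\mu\geq\mu_1$, $c_-\equiv0$, $c_+\gneqq0$ on $\Omega_1$; the function $z_2$ is solved in $H^1_0(\Omega_1)$ and all subsequent boundary estimates are done relative to $\partial\Omega_1$.

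Second, and more importantly, your Step~1 invokes a ``boundary Brezis--Cabr\'e estimate'' that is nowhere established in the paper; Lemma~\ref{bcLemma1} is purely interior, and Corollary~\ref{BWHI} does not yield an inequality of the form $\inf v_1/d\geq C\int f$. The paper sidesteps this entirely: since $\overline x\in\overline\Omega_+$, one can choose an \emph{interior} ball $B_{4R_2}(y)\subset B_{2R}(\overline x)\cap\Omega$ with $c_+\gneqq0$ in $B_{R_2}(y)$, and then apply verbatim the interior argument of Step~1 of Lemma~\ref{Steps A} (using Lemma~\ref{bcLemma1} on $\Omega_1$) to obtain $\inf_{B_{R_2}(y)} v_1\leq C$. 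Since $d(x,\partial\Omega_1)\geq 3R_2$ on $B_{R_2}(y)$, this immediately gives
\[
\inf_{B_{2R}(\overline x)\cap\Omega_1}\frac{v_1(x)}{d(x,\partial\Omega_1)}\leq \inf_{B_{R_2}(y)}\frac{v_1(x)}{d(x,\partial\Omega_1)}\leq \frac{1}{3R_2}\inf_{B_{R_2}(y)}v_1\leq C_1.
\]
With this in hand, Theorem~\ref{BWHIP} (not Corollary~\ref{BWHI}; no $b^-$ term is needed since $-\Delta v_1+\mu_1 h^- v_1\geq0$) applied on $\Omega_1$ bounds $\int_{B_{2R}(\overline x)\cap\Omega_1}(v_1/d(\cdot,\partial\Omega_1))^\epsilon$, and the weight is removed simply via $d(x,\partial\Omega_1)\leq\operatorname{diam}(\Omega)$. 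Your Steps~3--4 then go through exactly as you describe.
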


\begin{proof} Let $\overline R>0$ given by Theorem \ref{BWHIP}.
Under the assumption \eqref{A1}, we can find $R \in (0, \overline R/2]$ and $\Omega_1 \subset \Omega$ with 
$\partial\Omega_1$ of class ${\mathcal C}^{1,1}$ such that 
$B_{2R}(\overline{x}) \cap \Omega\subset \Omega_1$ and 
$\mu(x) \geq \mu_1 > 0$, $c_{-} \equiv 0$ and $c_{+} \gneqq 0$ in $\Omega_1$. 
\medbreak
Since $c_{-} \equiv 0$ and $\mu(x)\geq\mu_1$ in $\Omega_1$, observe that \eqref{idwi}
reduces to
\begin{equation}
\label{equ*bis}  
 -\Delta w_1 + \mu_1 h^{-}(x) w_1  \geq \lambda (1+ \mu_1 w_1) c_{+}(x)g_1(w_1) +h^+(x)(1+\mu_1 w_1) - h^{-}(x)\,, \quad  \textup{ in } \Omega_1
\end{equation}
\noindent Let $z_2$ be the solution of
\begin{equation}
\label{z2bis} 
 -\Delta z_2 + \mu_1 h^{-}(x) z_2  = -\Lambda_2 c_{+}(x)\frac{e^{-1}}{\mu_1}   
 \,, \qquad z_2 \in H^1_0(\Omega_1).
\end{equation}
As in Lemma \ref{Steps A},  $z_2\in \mathcal{C}(\overline{\Omega_1})$ and there exists a 
$D=D(\mu_1, \Lambda_2, \|h^-\|_{L^q(B_{4R})}, \|c_+\|_{L^q(B_{4R})}, q, \Omega_1) > 0$
 such that $-D \leq z_2 \leq 0$ on $\Omega_1$. Now defining  $v_1=w_1-z_2+\frac{1}{\mu_1}$ we observe that $v_1$ satisfies
\begin{equation}
\label{v_1} 
 -\Delta v_1 + \mu_1 h^{-}(x) v_1  \geq \Lambda_1 c_{+}(x)(1+ \mu_1 w_1) g_1(w_1)^+\,, 
 \quad  \textup{ in } \Omega_1.
\end{equation}
and $v_1>0$ on $\overline{\Omega_1}$.
%Since $v_1 >0$ on $\partial \Omega_1$, by the weak maximum principle  \cite[Theorem 8.1]{G_T_2001_S_Ed} %we deduce that $v_1 \geq 0$ on $\overline{\Omega_1}.$ 
Note also that $0 < 1+ \mu_1 w_1 = \mu_1 v_1 + \mu_1 z_2$ on $\overline{\Omega_1}$. Next, we split the rest of the proof into three steps.
\medbreak

\noindent \textbf{Step 1: }\textit{There exists
$C_{1} = C_{1}(\Omega_1, \overline{x}, \Lambda_1, \Lambda_2, R, \mu_1, q , \|h^{-}\|_{L^{\infty}(\Omega_1)},
%, \|c_+\|_{L^1(\Omega_1)},
 \|c_+\|_{L^q(\Omega_1)}) > 0 $
  such that 
\[ \inf_{B_{2R}(\overline{x}) \cap \Omega_1} \frac{v_1(x)}{d(x,\partial \Omega_1)} \leq C_{1}\,.\]}

\smallbreak
Choose $R_2  > 0$ and $y\in \Omega$ such that $B_{4 R_2}(y) \subset  B_{2R}(\overline{x}) \cap \Omega$ and 
$c_{+} \gneqq 0$ in $B_{R_2}(y)$.
As in Step 1 of Lemma \ref{Steps A}, 
there exists $C = C(\Omega_1, y, \Lambda_1, \Lambda_2, R_2, \mu_1, q , \|h^{-}\|_{L^{\infty}(\Omega_1)}, 
%\|c_+\|_{L^1(B_{R_2}(y))}, 
\|c_+\|_{L^q(\Omega_1)}) > 0 $
 such that 
\[ 
\inf_{B_{R_2}(y)} v_1(x) \leq C\,.\]
We conclude by observing, since $B_{4R_2}(y) \subset B_{2R}(\overline{x}) \cap \Omega_1$, that
$$ 
\inf_{B_{2R}(\overline{x}) \cap \Omega_1} \frac{v_1(x)}{d(x,\partial \Omega_1)}
\leq  \inf_{B_{R_2}(y)} \frac{v_1(x)}{d(x,\partial \Omega_1)} 
\leq 
\frac{1}{3 R_2} \, \inf_{B_{R_2}(y)} v_1(x).
$$

\noindent \textbf{Step 2:} \textit{There exist  $\epsilon = \epsilon(\overline{R}, \mu_1, \|h^{-}\|_{L^{\infty}(\Omega_1)},  \Omega_1)> 0$  and $C_{2} = C_{2}(\overline{x},\mu_1, R, \overline{R}, s, \Lambda_1, \Lambda_2,q,\|h^{-}\|_{L^{\infty}(\Omega_1)},$  $\|c_+\|_{L^q(\Omega_1)}) > 0 $  such that
\[ 
\Big( \int_{B_{2R}(\overline{x}) \cap \Omega} (1+ \mu_1 w_1)^{\epsilon} \,dx \Big)^{1/\epsilon} 
\leq
C_{2}.
\]}

\indent  By Theorem \ref{BWHIP} applied on \eqref{v_1}  and Step 1, we obtain constants 
$\epsilon = \epsilon(\overline{R}, \mu_1, \|h^{-}\|_{L^{\infty}(\Omega_1)},\Omega_1)>0$ and  
$ C=C(\Omega_1, \overline{x},\mu_1, \epsilon,  \overline{R},  \Lambda_1, \Lambda_2,q,\|h^{-}\|_{L^{\infty}(\Omega_1)}, \|c_+\|_{L^q(\Omega_1)}) > 0 $ such that 
%{\color{red}A completer} 
\[  \Big( \int_{B_{2R}(\overline{x}) \cap \Omega_1} 
\Big( \frac{v_1(x)}{d(x,\partial \Omega_1)} \Big)^{\epsilon} dx \Big)^{1/\epsilon} \leq C\,.\]
This clearly implies, since $\Omega_1 \subset \Omega$, that
\[  \Big( \int_{B_{2R}(\overline{x}) \cap \Omega_1} 
 v_1(x)^{\epsilon} dx \Big)^{1/\epsilon} \leq  C \diam(\Omega) \,.\]

\noindent The Step 2 then follows observing that 
$0 \leq 1 + \mu_1 w_1 = \mu_1 v_1 + \mu_1 z_2 \leq \mu_1 v_1$ and taking into account that $B_{2R}(\overline{x}) \cap \Omega = B_{2R}(\overline{x}) \cap \Omega_1.$

\medbreak

\noindent \textbf{Step 3:} \textit{Conclusion.}
\medbreak

Arguing exactly as in  Step 3 and  4 of Lemma \ref{Steps A}, 
using  Lemma \ref{BLMP} and Step 2, we show the existence of 
$C_{3} = C_3 (\overline{x},\mu_1, \mu_2, R,  \Lambda_1, \Lambda_2,\|h^{-}\|_{L^{\infty}(\Omega_1)}, \|c_+\|_{L^q(B_{2R}(\Omega_1)}) > 0 $
such that
\[ \sup_{B_{R}(\overline{x}) \cap \Omega} w_2 \leq C_{3}\,.\]
Hence, the proof of the lemma follows by the definition of $w_2$. 
%Directly observe that
%We conclude the proof arguing exactly as in  Step 4 of Lemma \ref{Steps A}, 
%using  Lemma \ref{BLMP} and Step 2. 
%Now, arguing exactly as in the Step 3 of Lemma \ref{Steps A}, using in this case Lemma \ref{BLMP} and 
%Step 2, we can conclude that there exists 
%$C_{3} = C_{3} (C_{3}, \mu_2, \Lambda_2, \|c^{+}\|_q,\|h\|_q ) > 0$ such that
%\[ \sup_{B_{R/2}(\overline{x}) \cap \Omega} w_2 \leq C_{3}\,.\] 
\end{proof}

\begin{proof}[\textbf{Proof of Theorem \ref{aPrioriBound}}]

\noindent Arguing by contradiction we assume the existence of 
sequences $\{\lambda_n\}\subset [\Lambda_1 ,\Lambda_2]$,   $\{u_n\}$ 
 solutions of \eqref{Plambda} for $\lambda=\lambda_n$
 % satisfying {\color{red}$u_n \geq u_0$ for all $n \in \N$ 
 and of points 
$\{x_n\} \subset \Omega$ such that 
\begin{equation} \label{ab3}
u_n(x_n) = \max\{u_n(x): x \in \overline{\Omega}\} \rightarrow \infty\,, \quad \textup{ as } n \rightarrow \infty\,.
\end{equation} 
Observe that  Lemma \ref{Step 1} and \eqref{ab3} together imply the existence of 
%a sequence $\{v_n\}$ of non-negative solution of \eqref{Plambda} and 
a sequence of points $y_n \in  \overline{\Omega}_{+}$ such that
\begin{equation} \label{ab8}
u_n(y_n) = \max\{u_n(y): y \in \overline{\Omega}_{+}\} \rightarrow \infty\,, 
\quad \textup{ as } n \rightarrow \infty\,.
\end{equation}
Passing to a subsequence if necessary, we may assume that 
$\lambda_n \rightarrow \overline{\lambda} \in  [\Lambda_1 ,\Lambda_2]$ and 
$y_n \rightarrow \overline{y} \in \overline{\Omega}_{+}$. Now, let us distinguish two cases:
\begin{itemize}
\item If $\overline{y} \in \overline{\Omega}_{+} \cap \Omega$, Lemma \ref{Steps A} shows that we can find 
$R_I > 0$ and $M_I > 0$ such that, if $u \in \HLinfty$ is a solution of 
\eqref{Plambda},  then $\sup_{B_{R_{I}}(\overline y)} u\leq M_{I}$.
This contradicts \eqref{ab8}. %$B_{R_{I}}(\overline{y})\subset \Omega_1$ and,
\item If $\overline{y} \in \overline{\Omega}_{+} \cap \partial \Omega$, Lemma \ref{Steps B} shows 
that we can find $R_B > 0$ and $M_B > 0$ such that, if $u \in \HLinfty$ is a solution of \eqref{Plambda}, 
then $\sup_{B_{R_{B}}(\overline y)\cap \Omega} u\leq  M_B$. Again, this contradicts 
\eqref{ab8}.
\end{itemize}
As  \eqref{ab8} cannot happen,  the result follows.
%Hence, \eqref{ab3} cannot happen and so, we can conclude that there exists a constant $M > 0$ such that
%\[ \|u\|_{\infty} \leq M\,.\]
\end{proof}

\section{Proof of Theorem \ref{th1}} \label{IV}

Let us begin with a preliminary result.

\begin{lemma} \label{nonExistenceLambdaLarge}
Under the assumption \eqref{A1}, assume that  $(P_0)$ has a solution $u_0$ for which there exist $\overline x\in \Omega$ and $R>0$ such that $c_+u_0 \gneqq 0$, $c_-\equiv 0$ and $\mu\geq 0$ in $B_R(\overline x)$. 
Then there exists $\overline\Lambda\in\, (0,\infty)$ 
 such that, for $\lambda \geq \overline\Lambda$, the problem \eqref{Plambda} has no solution $u$ with $u \geq u_0$ in $B_R(\overline x)$.
\end{lemma}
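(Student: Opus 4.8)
The plan is to argue by contradiction and to produce a bound $\lambda\le\overline\Lambda$ valid for \emph{every} pair $(\lambda,u)$ with $u$ a solution of \eqref{Plambda} satisfying $u\ge u_0$ in $B_R(\overline x)$, the threshold $\overline\Lambda$ depending only on $u_0,c_+,h,\mu$ and the ball $B_R(\overline x)$. First I would pass to a well-chosen sub-ball. Since $c_+u_0\gneqq 0$ in $B_R(\overline x)$ and $c_+\ge 0$, the set $\{x\in B_R(\overline x):c_+(x)u_0(x)>0\}$ has positive measure and lies in $\{u_0>0\}$; picking a Lebesgue density point $x_0$ of it and using that $u_0\in\mathcal C(\overline\Omega)$ with $u_0(x_0)>0$, there exist $\rho_0,\delta_0>0$ with $B':=B_{\rho_0}(x_0)\subset B_R(\overline x)$, $u_0\ge\delta_0$ on $\overline{B'}$, and, by the density property, $|\{c_+>0\}\cap B'|>0$. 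On $B'$ one has $c_-\equiv 0$, $\mu\ge 0$, and crucially $u\ge u_0\ge\delta_0>0$.

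Next I would exploit this positive lower bound. On $B'$ the equation reads $-\Delta u=\lambda c_+u+\mu|\nabla u|^2+h$ weakly; dropping $\mu|\nabla u|^2\ge 0$ and absorbing $h^-$ via $-h\ge-h^-\ge-(h^-/\delta_0)\,u$ (legitimate since $u\ge\delta_0>0$), the function $u$ becomes a positive weak supersolution of $-\Delta u\ge\widetilde V_\lambda\,u$ in $B'$, where $\widetilde V_\lambda:=\lambda c_+-\delta_0^{-1}h^-\in L^q(B')$, $q>N/2$. The heart of the argument is then the Allegretto–Piepenbrink (ground-state) substitution: testing this inequality against $u\eta^2$, $\eta\in C_c^\infty(B')$, and expanding $|\nabla(u\eta)|^2$ leads, after cancellation, to $\int_{B'}\big(|\nabla(u\eta)|^2-\widetilde V_\lambda(u\eta)^2\big)\ge 0$; as $u\in L^\infty(\Omega)$, $u\ge\delta_0>0$ on $B'$ and $\nabla u\in L^2_{\loc}$, the functions $u\eta$ exhaust $C_c^\infty(B')$ densely, so the Rayleigh characterisation forces $\lambda_1(-\Delta-\widetilde V_\lambda,B')\ge 0$. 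On the other hand, fixing once and for all a $\psi\in C_c^\infty(B')$ with $\int_{B'}c_+\psi^2>0$ (available because $|\{c_+>0\}\cap B'|>0$) and using it as a competitor gives
\[
\lambda_1\big(-\Delta-\widetilde V_\lambda,B'\big)\ \le\ \frac{\displaystyle\int_{B'}|\nabla\psi|^2+\delta_0^{-1}\!\int_{B'}h^-\psi^2-\lambda\!\int_{B'}c_+\psi^2}{\displaystyle\int_{B'}\psi^2}\ \xrightarrow[\lambda\to+\infty]{}\ -\infty .
\]
Choosing $\overline\Lambda\in(0,\infty)$ so large that the right-hand side is negative for $\lambda\ge\overline\Lambda$ — a choice depending only on $\psi,\delta_0,h^-,c_+$, hence on the fixed data — contradicts $\lambda_1\ge 0$, and the lemma follows.

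The step I expect to be the main obstacle is the second one. Because \eqref{Plambda} carries no a priori bound in the present setting, one cannot simply test $-\Delta u\ge\lambda c_+u+h$ against the principal eigenfunction $\phi_1$ of $-\Delta-\lambda c_+$ on $B'$: an uncontrolled boundary term involving $u\,|\partial_n\phi_1|$ on $\partial B'$ then appears and defeats the argument. The ground-state substitution is precisely the device that avoids any boundary contribution, but for it to be useful one needs a \emph{uniform} positive lower bound $u\ge\delta_0$ on a sub-ball, and producing this is exactly what the localisation step extracts from the hypothesis $c_+u_0\gneqq 0$ together with the continuity of $u_0$. (Beyond guaranteeing $u\in\HLinfty$, the only feature of \eqref{A1} actually used is the local information $c_-\equiv 0$, $\mu\ge 0$ in $B_R(\overline x)$, assumed in the lemma.)
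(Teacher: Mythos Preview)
Your argument is correct and gives a genuinely different proof from the paper's. A couple of minor technical points: the localisation step is slightly loose as written (a Lebesgue density point of $\{c_+u_0>0\}$ need not a priori satisfy $u_0(x_0)>0$), but since $\{u_0>0\}$ is open and contains $\{c_+u_0>0\}$ up to a null set, one easily finds a ball $B'\subset B_R(\overline x)\cap\{u_0>\delta_0\}$ on which $c_+\gneqq 0$; and the ``density'' claim for $\{u\eta:\eta\in C_c^\infty(B')\}$ can be bypassed entirely by testing the supersolution inequality directly with $\psi^2/u\in H_0^1(B')\cap L^\infty(B')$ for the fixed $\psi$, which immediately yields $\int_{B'}|\nabla\psi|^2\ge\int_{B'}\widetilde V_\lambda\psi^2$ and hence the bound on $\lambda$.

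The paper proceeds differently and, in fact, does exactly what you declared impossible: it tests \eqref{Plambda} against the first eigenfunction $\varphi_1^1$ of $-\Delta\varphi=\gamma\,\overline c(x)\varphi$ on $B_R(\overline x)$ (with $\overline c=\min\{c_+,1\}$), extended by zero. The boundary term $\int_{\partial B_R(\overline x)}u\,\partial_n\varphi_1^1\,dS$ is \emph{not} uncontrolled: by Hopf's lemma $\partial_n\varphi_1^1<0$ on $\partial B_R(\overline x)$, and since $u\ge u_0$ there by continuity, this integral is bounded \emph{above} by the fixed constant $\int_{\partial B_R(\overline x)}u_0\,\partial_n\varphi_1^1\,dS$. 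Combining with $\mu\ge 0$, $c_-\equiv 0$ on $B_R(\overline x)$ yields $(\gamma_1^1-\lambda)\int c_+\varphi_1^1 u\ge -D$, and since $\int c_+\varphi_1^1 u\ge\int c_+\varphi_1^1 u_0>0$ this forces $\lambda$ bounded. The paper's route is shorter and does not require passing to a sub-ball where $u_0$ is strictly positive; your Allegretto--Piepenbrink route has the advantage of avoiding boundary terms altogether and would transfer more readily to operators or domains for which a Hopf-type boundary estimate on the eigenfunction is unavailable.
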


\begin{proof} 
Let us introduce $\overline{c}(x) := \min\{c_{+}(x),1\}$. Observe that $0 \lneqq \overline{c} \leq c_{+}$
%Let $C>0$   be such that the function $\overline c= \min(c_+, C)$ satisfies $\overline c\gneqq 0$. 
and define $\gamma^1_1 > 0$ as the first eigenvalue of the problem
\begin{equation} \label{Pgamma}
\left\{
\begin{aligned}
-\Delta \varphi &  = \gamma \overline c (x)\varphi & \textup{ in } B_R(\overline x),\\
\varphi & = 0 & \textup{ on } \partial B_R(\overline x).
\end{aligned}
\right.
\end{equation}By standard arguments, there exists $\varphi_1^1 \in {\mathcal C}_0^1(\overline {B_R(\overline x)})$  an associated first eigenfunction such that 
$\varphi_1^1(x)>0$ for all $x\in B_R(\overline x)$ and, 
denoting by $n$ the outward normal to $\partial B_R(\overline x)$, we also have
\begin{equation}\label{derive-negative}
\frac{\partial \varphi_1^1(x)}{\partial n} < 0\,, \quad \textup{on } \partial B_R(\overline x).
\end{equation}
Now, let us introduce the function $\phi \in \HLinfty$, defined as
\[ \phi(x) = \left\{ \begin{aligned}
& \varphi_1^1(x)\,, \quad & x \in B_R(\overline x), \\
& 0 & x \in \Omega \setminus B_R(\overline x),
\end{aligned}
\right.
\]
and suppose that $u$ is a solution of \eqref{Plambda} such that $u \geq u_0$ in $B_R(\overline x)$. First observe that, in view of \eqref{derive-negative} and as $u\geq u_0$  on $\overline{B_R(\overline x)}$, there exists a constant  $C>0$ independent of $u$ such that
\begin{equation}\label{derive-negative-bis}
 \int_{\partial B_R(\overline x)} u \frac{\partial \varphi_1^1}{\partial n} \,dS \leq C.
\end{equation}
Thus on one hand, using  \eqref{Pgamma} and \eqref{derive-negative-bis}, we obtain % when $\gamma = \gamma_1^1\,,$ we obtain that
\begin{equation} \label{ne1}
\begin{aligned}
 \int_{\Omega}\big( \nabla \phi \gradu  + c_{-} &(x)\, \phi\, u\big)\, dx 
= 
 \int_{B_R(\overline x)} \nabla \varphi_1^1 \gradu\, dx 
= 
- \int_{B_R(\overline x)} u \Delta \varphi_1^1\, dx 
+ 
\int_{\partial B_R(\overline x)} u \frac{\partial \varphi_1^1}{\partial n} \,dS 
\\
&\leq 
 - \int_{B_R(\overline x)} u \Delta \varphi_1^1 \,dx + C 
 =  \gamma_1^1 \int_{B_R(\overline x)}\overline{c}(x) \,\varphi_1^1 \,u\, dx + C
\leq \gamma_1^1 \int_{\Omega} c_{+}(x) \,\phi\, u\, dx\, + C.
\end{aligned}
\end{equation}
On the other hand,  considering $\phi$ as test function in \eqref{Plambda} we observe that
\begin{equation} \label{ne2}
\int_{\Omega} \big(\nabla \phi \gradu + c_{-}(x) \,\phi \,u\big)\, dx 
= \lambda \int_{\Omega} c_{+}(x) \,u\,\phi \, dx + \int_{\Omega} \big( \mu(x) |\gradu|^2 + h(x)\big) \phi\, dx\,.
\end{equation}
From \eqref{ne1} and \eqref{ne2}, we then deduce that, for a $D>0$ independent of $u$.
\begin{equation} \label{ne3}
\begin{aligned}
 (\gamma_1^1 - \lambda) \int_{\Omega} c_{+}(x) \,\phi \,u\, dx 
& \geq 
 \int_{\Omega} \big( \mu(x) |\gradu|^2 + h(x) \big) \phi\, dx - C
\\
& = 
 \int_{B_R(\overline x)} \big( \mu(x) |\gradu|^2 + h(x) \big) \varphi_1^1\, dx  - C  \geq -D.
\end{aligned}
\end{equation}
 As   $c_+u_0 \gneqq 0$ in $B_R(\overline x)$, we have that 
 \[
 \int_{\Omega} c_{+}(x) \,\phi \,u\, dx 
 \geq \int_{\Omega} c_{+}(x) \,\phi \,u_0\, dx  >0.
 \]
Hence, for $\lambda > \gamma_1^1$ large enough, we obtain a contradiction with \eqref{ne3}.
\end{proof}

%\begin{remark}
%Observe that in case $h \in L^p(\Omega)$ for some $p > N$, $c^{-}$, $\mu \in \Linfty$ and $h \gneqq 0$, we have  $u_0(x) > 0$ for a.e. $x\in \Omega$ and \eqref{ne3} gives a contradiction in case $\lambda \geq \gamma_1^1$. 
%\end{remark}

\begin{proof}[\textbf{Proof of  Theorem \ref{th1}}]  We treat separately the cases $\lambda \leq 0$ and $\lambda > 0$.

\medbreak 

\noindent \textbf{Part 1:} \textit{$\lambda\leq0$.} 
\medbreak

Observe that for $\lambda\leq 0$ we have $\lambda c_+-c_-\leq -c_- $ and hence the result 
follows from \cite[Lemma 5.1, Proposition 4.1, Proposition 5.1, Theorem 2.2]{A_DC_J_T_2015}
as in the proof of  \cite[Theorem 1.2]{A_DC_J_T_2015}.  Moreover, observe that $u_0$ is an upper solution of  \eqref{Plambda}. Hence we conclude that 
$u_{\lambda}\leq u_0$ by 
 \cite[Lemmas 2.1 and 2.2]{A_DC_J_T_2014}.
\medbreak

\noindent \textbf{Part 2:} \textit{$\lambda > 0$.}
\medbreak

Consider, for $\lambda \geq 0$  the modified problem
\[ \label{Pbar} \tag{$\overline{P}_\lambda$}
 -\Delta u +u=  (\lambda c_+(x) - c_-(x)+1) \, ((u-u_0)^{+} +u_0)+\mu(x)|\nabla u|^2 + h(x)\,, \quad u \in \HLinfty.
\]
As in the case of $(P_{\lambda})$, any solution of  
 \eqref{Pbar} belongs to $\mathcal{C}^{0,\tau}(\overline{\Omega})$ for some $\tau > 0$. Moreover, observe that $u$ is a solution of \eqref{Pbar} if and only if it is a fixed point of the operator  %\in \mathcal{C}(\overline{\Omega})
$\overline T_\lambda$ defined by 
  $\overline T_\lambda: {\mathcal C}(\overline\Omega)\to {\mathcal C}(\overline\Omega): v\mapsto u$ 
  with $u$ the solution of
\[ 
 -\Delta u +u-\mu(x)|\nabla u|^2 =  (\lambda c_+(x) - c_-(x)+1)\, ((v-u_0)^{+} +u_0)+h(x)\,, \quad u \in \HLinfty.
\]
Applying \cite[Lemma 5.2]{A_DC_J_T_2015},  we see that $\overline T_\lambda$ is completely continuous. Now, we denote
\[ 
\overline\Sigma := 
\{ (\lambda, u ) \in \R \times \mathcal{C}(\overline{\Omega}) : u \textup{ solves } \eqref{Pbar}  \}\,
\]
%\medbreak
and we split the rest of the proof into three steps.
\medbreak
\noindent \textbf{Step 1:} \textit{If $u$ is a solution of \eqref{Pbar} then $u\geq u_0$ 
and hence it is a solution of \eqref{Plambda}.} 
\medbreak

Observe that $(u-u_0)^{+} +u_0-u\geq  0$. Also we have that $\lambda c_+(x)((u-u_0)^{+} +u_0) \geq \lambda c_+(x)u_0\geq 0$. Hence, we deduce 
that a solution $u$ of  \eqref{Pbar} is an upper solution of 
\begin{equation}
\label{eqThm1.2}
 -\Delta u =  - c_-(x) \, ((u-u_0)^{+} +u_0)+\mu(x)|\nabla u|^2 + h(x)\,, \quad u \in \HLinfty.
\end{equation}
Then the result follows from 
 \cite[Lemmas 2.1 and 2.2]{A_DC_J_T_2014} noting that $u_0$ is a solution of \eqref{eqThm1.2}.
\medbreak

\noindent \textbf{Step 2:} \textit{$u_0$ is the unique solution of  $(\overline{P}_0)$ and 
$i(I-\overline T_0,u_0) = 1$.} 
\medbreak
Again the uniqueness of the solution of $(\overline{P}_0)$ can be deduced from 
\cite[Lemmas 2.1 and 2.2]{A_DC_J_T_2014}. Now, in order to prove that  $i(I-\overline T_0,u_0) = 1$, we consider the operator $S_t$ defined by $S_t: {\mathcal C}(\overline\Omega)\to {\mathcal C}(\overline\Omega): v\mapsto u$ 
with $u$ the solution of
\[ 
 -\Delta u +u-\mu(x)|\nabla u|^2 = t[ (- c_-(x)+1)\, (u_0+ (v-u_0)^{+} -(v-u_0-1)^+)+h(x)]\,, \quad u \in \HLinfty.
\]
First, observe that there exists $R>0$ such that, for all $t\in [0,1]$ and all $v\in {\mathcal C}(\overline\Omega)$, 
\[
\|S_t v\|_{\infty}<R.
\]
This implies that
\[
 \textup{deg}(I-S_1, B(0, R))=
 \textup{deg}(I, B(0, R))=
1.
\]
By  \cite[Lemmas 2.1 and 2.2]{A_DC_J_T_2014}, we see that $u_0$ is the only fixed point of  $S_1$. 
Hence, by the excision property of the degree, for all $\epsilon>0$ small enough, it follows that
\[
 \textup{deg}(I-S_1, B(u_0, \epsilon))=
 \textup{deg}(I-S_1, B(0, R))= 1.
\]
Thus, as for $\epsilon<1$, $S_1=\overline T_0$, we conclude that 
\[
i(I-\overline T_0,u_0)=\lim_{\epsilon\to0}
 \textup{deg}(I-\overline T_0, B(u_0, \epsilon))=\lim_{\epsilon\to0}
 \textup{deg}(I-S_1, B(u_0, \epsilon))= 1.
\]
%\pagebreak

%\medbreak
\noindent \textbf{Step 3:} \textit{Existence and behavior of the continuum.} 
\medbreak

By \cite[Theorem 3.2]{R_1971} (see also \cite[Theorem 2.2]{A_DC_J_T_2015}), there exists a continuum 
$\mathscr{C} \subset \overline\Sigma$ such that 
$\mathscr{C} \cap ( [0,\infty) \times \mathcal{C}(\overline{\Omega}))$
is unbounded. By Step 1, we know that if $u \in \mathscr{C} $ then $u \geq u_0$ and is a solution of $(P_{\lambda})$. Thus applying Lemma \ref{nonExistenceLambdaLarge}, we deduce that 
$\textup{Proj}_{\R}\mathscr{C} \cap [0,\infty)  \subset [0, \overline \Lambda]$. By Theorem \ref{aPrioriBound} and Step 1, we deduce that for every $\Lambda_1 \in (0,\overline\Lambda)$, 
there is an a priori bound on the solutions of \eqref{Pbar} for $\lambda \in [\Lambda_1,\overline\Lambda]$. 
Hence, the projection of 
$\mathscr{C} \cap ( [\Lambda_1, \overline\Lambda)\times \mathcal{C}(\overline{\Omega}))$ on 
$\mathcal{C}(\overline{\Omega})$ is bounded, and so, we deduce that $\mathscr{C}$ emanates from infinity 
to the right of $\lambda = 0$. Finally, since $\mathscr{C}$ contains $(0, u_0)$ with $u_0$ the unique solution 
of $(P_0)$, we conclude that there exists $\lambda_0 \in (0,\overline\Lambda)$ such that problem
 \eqref{Pbar}, and thus problem  $(P_{\lambda})$, has at least two solutions satisfying $u \geq u_0$ for $\lambda \in (0,\lambda_0)$. 
\end{proof}

\section*{Acknowledgements} 
 
The authors thank warmly Prof. David Arcoya for helpful discussions having lead to improvements of the results. 
Part of this work was done during the visit of the first author to the University of Bourgogne-Franche-Comt\'e. She thanks the LMB for its hospitality and the R\'egion Bourgogne-Franche-Comt\'e for the financial support.
\bigbreak

\bibliographystyle{plain}
\bibliography{Bibliography}

\end{document}